\newcommand{\N}{\mathbb{N}}
\newcommand{\R}{\mathbb{R}}
\newcommand{\one}{\mathds{1}}
\newcommand{\esp}{\mathbb{E}}
\DeclareMathOperator{\cov}{Cov}
\DeclareMathOperator{\var}{Var}
\newcommand{\proba}{\mathbb{P}}
\newcommand{\loi}[1]{\mathcal{#1}}
\newcommand{\conv}{\xrightarrow{}}
\newcommand{\convloi}{\xrightarrow{d}}
\newcommand{\convproba}{\xrightarrow{\proba}}
\newcommand{\convps}{\xrightarrow{a.s.}}
\newcommand{\convn}{\xrightarrow[n\conv+\infty]{}}
\newcommand{\convnps}{\xrightarrow[n\conv+\infty]{a.s.}}
\newcommand{\ud}{\,\mathrm{d}}
\newcommand{\uD}{\mathrm{D}}
\newcommand{\po}{\mathrm{o}}
\newcommand{\go}{\mathrm{O}}
\newcommand{\pop}{\mathrm{o_\proba}}
\newcommand{\gop}{\mathrm{O_\proba}}
\newcommand{\interff}[2]{[#1,\,#2]}
\newcommand{\interoo}[2]{]#1,\,#2[}
\newcommand{\interof}[2]{]#1,\,#2]}
\newcommand{\interfo}[2]{[#1,\,#2[}
\newcommand{\Interoo}[2]{\left]{#1,\,#2}\right[}
\newtheorem{theorem}{Theorem}[section]
\newtheorem{corollary}[theorem]{Corollary}
\newtheorem{lemma}[theorem]{Lemma}
\newtheorem{proposition}[theorem]{Proposition}
\begin{document}

\title{Consistency of the posterior distribution and MLE for piecewise linear regression}

\author{Tristan  Launay${}^{1,2}$ \and Anne Philippe${}^{1}$  \and  Sophie Lamarche${}^{2}$ }

\date{\small ${}^{1}$ Laboratoire de Math\'ematiques Jean Leray, \\2 Rue
  de la Houssinière -- BP 92208, 44322 Nantes Cedex 3, France  
\\ 
${}^{2}$  Electricit\'e de France R\&D, 1 Avenue du G\'en\'eral de
Gaulle, \\ 92141 Clamart Cedex, France}

\maketitle

\begin{abstract}
We prove the weak consistency of the posterior distribution and that of the Bayes estimator for a two-phase piecewise linear regression mdoel where the break-point is unknown. The non-differentiability of the likelihood of the model with regard to the break-point parameter induces technical difficulties that we overcome by creating a regularised version of the problem at hand. We first recover the strong consistency of the quantities of interest for the regularised version, using results about the MLE, and we then prove that the regularised version and the original version of the problem share the same asymptotic properties.

\noindent \textit{keywords : }  consistency ; asymptotic distribution
; posterior distribution ; MLE ; piecewise regression. 
\end{abstract}

\section{Introduction}\label{sec:intro}
We consider a continuous segmented regression model with 2 phases, one of them (the rightmost) being zero. Let \(u\) be the unknown breakpoint and \(\gamma\in\R\) be the unknown regression coefficient of the non zero phase. The observations \(X_{1:n} = (X_1,\ldots,X_n)\) depend on an exogenous variable that we denote \(t_{1:n} = (t_1, \ldots, t_n)\) via the model given for \(i=1,\ldots,n\) by
\begin{align}\label{eq.modelesylwester}
X_i &= \mu(\eta, t_i) + \xi_i:= \gamma\cdot (t_i - u)\one_{\interfo{t_i}{+\infty}}(u) + \xi_i,
\end{align}
where \((\xi_i)_{i\in\N}\) is a sequence of independent and identically distributed (i.i.d.) random variables with a common centered Gaussian distribution of unknown variance \(\sigma^2\), \(\loi{N}(0,\sigma^2)\), and where \(\one_A\) denotes the indicator function of a set \(A\).

Such a model is for instance used in practice to estimate and predict the heating part of the electricity demand in France. See \cite{Bruhns} for the definition of the complete model and \cite{Launay1} for a Bayesian approach. In this particular case, \(u\) corresponds to the heating threshold above which the temperatures \(t_{1:n}\) do not have any effect over the electricity load, and \(\gamma\) corresponds to the heating gradient i.e. the strength of the described heating effect.

The work presented in this paper is most notably inspired by the results developed in \cite{Ghosh} and \cite{Feder}.

\citeauthor{Feder} proved the weak consistency of the least squares estimator in segmented regression problems with a known finite number of phases under the hypotheses of his Theorem 3.10 and some additional assumptions disseminated throughout his paper, amongst which we find that the empirical cumulative distribution functions of the temperatures at the \(n\)-th step \(t_{n1}, \ldots, t_{nn}\) are required to converge to a cumulative distribution function, say \(F_n\) converges to \(F\), which is of course to be compared to our own Assumption (A1). \citeauthor{Feder} also derived the asymptotic distribution of the least squares estimator under the same set of assumptions. Unfortunately there are a few typographical errors in his paper (most notably resulting in the disappearance of \(\sigma_0^2\) from the asymptotic variance matrix in his main theorems), and he also did not include \(\widehat{\sigma}_n^2\) in his study of the asymptotic distribution.

The asymptotic behaviour of the posterior distribution is a central question that has already been raised in the past. For example, \citeauthor{Ghosh} worked out the limit of the posterior distribution in a general and regular enough i.i.d. setup. In particular they manage to derive the asymptotic normality of the posterior distribution under third-order differentiability conditions. There are also a number of works dealing with some kind of non regularity, like these of \cite{Sareen} which consider data the support of which depends on the parameters to be estimated, or those of \cite{Ibragimov} which offer the limiting behaviour of the likelihood ratio for a wide range of i.i.d. models whose likelihood may present different types of singularity. Unfortunately, the heating part model presented here does not fall into any of these already studied categories.

In this paper, we show that the results of \citeauthor{Ghosh} can be extended to a non i.i.d. two-phase regression model. We do so by using the original idea found in \cite{Sylwester}\footnote{\citeauthor{Sylwester} indeed considers the same model as we do here, however his asymptotic results are false due to an incorrect reparametrisation of the problem and an error in the proof of his Theorem 3.5.}: we introduce a new, regularised version of the problem called pseudo-problem, later reprised by \citeauthor{Feder}. The pseudo-problem consists in removing a fraction of the observations in the neighbourhood of the true parameter to obtain a differentiable likelihood function. We first recover the results of \citeauthor{Ghosh} for this pseudo-problem and then extend these results to the (full) problem by showing that the estimates for the problem and the pseudo-problem have the same asymptotic behaviour.

From this point on, we shall denote the parameters \(\theta = (\gamma, u, \sigma^2) = (\eta, \sigma^2)\) and \(\theta_0\) will denote the true value of \(\theta\). We may also occasionally refer to the intercept of the model as \(\beta = -\gamma u\). The log-likelihood of the \(n\) first observations \(X_{1:n}\) of the model will be denoted
\begin{align}\label{eq.defloglikelihood1n}
l_{1:n}(X_{1:n}|\theta) &= \sum_{i=1}^n l_i(X_i|\theta) \\
&=-\frac{n}{2}\log\left(2\pi\sigma^2\right) - \sum_{i=1}^n  \frac{1}{2\sigma^2}\left(X_i-\gamma\cdot (t_i - u)\one_{\interfo{t_i}{+\infty}}(u)\right)^2,
\end{align}
where \(l_i(X_i|\theta)\) designates the log-likelihood of the \(i\)-th observation \(X_i\), i.e.
\begin{align}\label{eq.defloglikelihoodi}
l_i(X_{1:n}|\theta) &= -\frac{1}{2}\log\left(2\pi\sigma^2\right) - \frac{1}{2\sigma^2}\left(X_i-\gamma\cdot (t_i - u)\one_{\interfo{t_i}{+\infty}}(u)\right)^2.
\end{align}
Notice that we do not mention explicitly the link between the likelihood \(l\) and the sequence of temperatures \((t_n)_{n\in\N}\) in these notations, so as to keep them as minimal as possible. The least square estimator \({\widehat\theta}_n\) of \(\theta\) being also the maximum likelihood estimator of the model, we refer to it as the MLE.

Throughout the rest of this paper we work under the following assumptions
\vspace{0.25em}\par\noindent\textsc{Assumption (A1).~} The sequence of temperatures (exogenous variable) \((t_{n})_{n\in\N}\) belongs to a compact set \(\interff{\underline{u}}{\overline{u}}\) and the sequence of the empirical cumulative distribution functions \((F_n)_{n\in\N}\) of \((t_1, \ldots, t_n)\), defined by \[F_n(u) = \frac{1}{n}\sum_{i=1}^n \one_{\interfo{t_i}{+\infty}}(u),\] converges pointwise to a function \(F\) where \(F\) is a cumulative distribution function itself, which is continuously differentiable over \(\interff{\underline{u}}{\overline{u}}\).
\vspace{0.25em}\par\noindent\textsc{Remark 1.~} Due to a counterpart to Dini's Theorem \cite[see Theorem \ref{theo.Polya} taken from][(p81)]{Polya}, \(F_n\) converges to \(F\) uniformly over \(\interff{\underline{u}}{\overline{u}}\).
\vspace{0.25em}\par\noindent\textsc{Remark 2.~} Let \(h\) be a continuous, bounded function on \(\interff{\underline{u}}{\overline{u}}\). As an immediate consequence of this assumption, for any interval \(I\subset\interff{\underline{u}}{\overline{u}}\), we have, as \(n\conv+\infty\)
\begin{align*}
\frac{1}{n} \sum_{i=1}^n h(t_i) \one_{I}(t_i) &= \int_{I} h(t) \ud F_n(t) \conv \int_{I} h(t) \ud F(t) = \int_{I} h(t) f(t) \ud t,
\end{align*}
the convergence holding true by definition of the convergence of probability measures \cite[see][pages 14--16]{BillingsleyCPM}.
In particular, for \(I=\interff{\underline{u}}{\overline{u}}\) and \(I=\interof{-\infty}{u}\) we get, as \(n\conv+\infty\)
\begin{align*}
\frac{1}{n} \sum_{i=1}^n h(t_i) &\conv \int_{\underline{u}}^{\overline{u}} h(t) f(t) \ud t, &
\frac{1}{n} \sum_{i=1}^n h(t_i) \one_{\interfo{t_i}{+\infty}}(u) &\conv \int_{\underline{u}}^{u} h(t) f(t) \ud t.
\end{align*}
\vspace{0.25em}\par\noindent\textsc{Remark 3.~} It is a general enough assumption which encompasses both the common cases of i.i.d. continuous random variables and periodic (non random) variables under a continous (e.g. Gaussian) noise.

\vspace{0.25em}\par\noindent\textsc{Assumption (A2).~}  \(\theta_0\in\Theta\), where the parameter space \(\Theta\) is defined (for identifiability) as \[\Theta = \R^*\times\interoo{\underline{u}}{\overline{u}}\times\R_+^*,\] where \(\R^* = \{ x \in \R\;, x \neq 0 \}\) and \(\R_+^* = \{ x \in \R\;, x > 0 \}\).
\vspace{0.25em}\par\noindent\textsc{Assumption (A3).~}  \(f=F^\prime\) does not vanish (i.e. is positive) on \(\interoo{\underline{u}}{\overline{u}}\).
\vspace{0.25em}\par\noindent\textsc{Assumption (A4).~}  There exists \(K\subset\Theta\) a compact subset of the parameter space \(\Theta\) such that \({\widehat\theta}_n \in K\) for any \(n\) large enough.

\vspace{0.25em}\par The paper is organised as follows. In Section \ref{sec:bayescons}, we present the Bayesian consistency (the proofs involved there rely on the asymptotic distribution of the MLE) and introduce the concept of pseudo-problem. In Section \ref{sec:strongconsMLE}, we prove that the MLE for the full problem is strongly consistent. In Section \ref{sec:asympdistMLE} we derive the asymptotic distribution of the MLE using the results of Section \ref{sec:strongconsMLE}: to do so, we first derive the asymptotic distribution of the MLE for the pseudo-problem and then show that the MLEs for the pseudo-problem and the problem share the same asymptotic distribution. We discuss these results in Section \ref{sec:discussion}. The extensive proofs of the main results are found in Section \ref{sec:proofmainresults} while the most technical results are pushed back into Section \ref{sec:prooftechnical} at the end of this paper.

\vspace{0.25em}\par\noindent\textsc{Notations.~}
Whenever mentioned, the \(\go\) and \(\po\) notations will be used to designate a.s. \(\go\) and a.s. \(\po\) respectively, unless there are indexed with \(\proba\) as in \(\gop\) and \(\pop\), in which case they will designate \(\go\) and \(\po\) in probability respectively.

Hereafter we will use the notation \(A^c\) for the complement of the set \(A\) and \(B(x,r)\) for the open ball of radius \(r\) centred at \(x\) i.e. \(B(x, r) = \{x^\prime,\; \|x^\prime-x\| < r\}\).

\section{Bayesian consistency}\label{sec:bayescons}
In this Section, we show that the posterior distribution of \(\theta\) given \((X_1,\ldots,X_n)\) asymptotically favours any neighbourhood of \(\theta_0\) as long as the prior distribution itself charges a (possibly different) neighbourhood of \(\theta_0\) (see Theorem \ref{theo.strongconsistencyposterior}). We then present in Theorem \ref{theo.asympposteriorpb} the main result of this paper i.e. the convergence of posterior distribution with suitable normalisation to a Gaussian distribution.

\subsection{Consistency and asymptotic normality of the posterior distribution}
\begin{theorem}\label{theo.strongconsistencyposterior} Let \(\pi(\cdot)\) be a prior distribution on \(\theta\), continuous and positive on a neighbourhood of \(\theta_0\) and let \(U\) be a neighbourhood of \(\theta_0\), then under Assumptions (A1)--(A4), as \(n\conv+\infty\),
\begin{align}
\int_{U} \pi(\theta|X_{1:n}) \ud \theta \convps 1. \label{eq.strongconsistencyposterior}
\end{align}
\end{theorem}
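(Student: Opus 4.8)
The plan is to prove the equivalent statement \(\int_{U^c}\pi(\theta\mid X_{1:n})\,d\theta \convps 0\), by writing the posterior mass of \(U^c\) as a ratio of integrated likelihood ratios. Normalising the log-likelihood by its value at \(\theta_0\), set \(M_n(\theta) = \frac1n\bigl(l_{1:n}(X_{1:n}\mid\theta)-l_{1:n}(X_{1:n}\mid\theta_0)\bigr)\), so that
\begin{align*}
\int_{U^c}\pi(\theta\mid X_{1:n})\,d\theta = \frac{\int_{U^c\cap\Theta}\pi(\theta)\,e^{nM_n(\theta)}\,d\theta}{\int_{\Theta}\pi(\theta)\,e^{nM_n(\theta)}\,d\theta}.
\end{align*}
The first step is to identify the almost sure limit of \(M_n\). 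Expanding the Gaussian log-likelihoods and invoking the limits of Remark 2 together with the strong law for the i.i.d.\ noise \((\xi_i)\), \(M_n(\theta)\) converges pointwise a.s.\ to the contrast
\begin{align*}
M(\theta) = -\tfrac12\log\tfrac{\sigma^2}{\sigma_0^2} + \tfrac12 - \frac{1}{2\sigma^2}\Bigl(\sigma_0^2 + \int_{\underline{u}}^{\overline{u}}\bigl(\mu(\eta,t)-\mu(\eta_0,t)\bigr)^2 f(t)\,dt\Bigr),
\end{align*}
and Assumptions (A2)--(A3) guarantee that \(M\) is uniquely maximised at \(\theta=\theta_0\), where it vanishes, so that \(M(\theta)<0\) for \(\theta\neq\theta_0\). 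I would upgrade this to locally uniform a.s.\ convergence on compact subsets of \(\Theta\) by a Wald-type equicontinuity argument, using the uniform convergence of \(F_n\) from Remark 1 to control the dependence on the break-point \(u\) appearing inside the indicator.

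The second step bounds the denominator from below. Fixing \(\delta>0\) and a neighbourhood \(V\subset U\) of \(\theta_0\) on which \(M>-\delta\), the locally uniform convergence gives \(M_n>-2\delta\) on \(V\) for all \(n\) large enough, whence the denominator is at least \(\pi(V)\,e^{-2n\delta}\) eventually, with \(\pi(V)>0\) since \(\pi\) is continuous and positive near \(\theta_0\). Alternatively, and more sharply, the strong consistency of \(\widehat\theta_n\) together with its asymptotic normality yields a Laplace lower bound of order \(n^{-3/2}\) for this integral; this is the route that ties the present result to the asymptotic distribution of the MLE.

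The third step bounds the numerator. I would split \(U^c\) into the compact piece \(U^c\cap K\), with \(K\) the compact set of Assumption (A4), and the tail \(U^c\cap K^c\). On the compact piece the locally uniform convergence gives \(\sup_{U^c\cap K}M_n \to \sup_{U^c\cap K}M =: -\varepsilon<0\), so this contribution is at most \(\pi(\Theta)\,e^{-n\varepsilon/2}\) eventually; choosing \(\delta<\varepsilon/4\) makes its ratio to the denominator decay exponentially, hence a.s.\ to \(0\). The main obstacle is the tail \(U^c\cap K^c\), where \(\theta\) runs off to the boundary of \(\Theta\) (\(\sigma^2\to0\) or \(+\infty\), \(|\gamma|\to\infty\)) and the compact contrast argument no longer applies. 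Here I would exploit Assumption (A4), which confines \(\widehat\theta_n\) to \(K\), to dominate \(M_n\) outside \(K\) by an explicit function of \((\gamma,u,\sigma^2)\) read off from the quadratic form of the Gaussian likelihood, and then invoke integrability of the prior to show that the tail integral is \(o(e^{-2n\delta})\) and is therefore negligible against the denominator. Controlling this tail uniformly in \(n\), rather than the behaviour of the contrast near \(\theta_0\), is the delicate point, and is where the explicit Gaussian structure of the model must be used; I expect this estimate to be the one deferred to the technical section.
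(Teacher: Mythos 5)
Your proposal is correct in substance and reaches the result by the same basic mechanism as the paper (write the posterior mass of \(U^c\) as a ratio of integrated likelihood ratios, dominate the numerator by \(e^{-n\epsilon}\), minorate the denominator by \(e^{-n\epsilon/2}\) up to constants), but it is organised differently. Your limiting contrast \(M(\theta)\) is exactly \(-\tfrac12 b(\theta)\) with \(b\) as in Lemma \ref{lemma.sylwesterbnandb}, and your Wald-type uniform convergence on compacts is precisely what Lemmas \ref{lemma.convergenceuniformemu}, \ref{lemma.sylwestertheo35} and \ref{lemma.sylwesterbnandb} deliver; so your denominator bound and your compact-piece numerator bound both go through. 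Where you differ from the paper is the treatment of the numerator: you split \(U^c\) into \(U^c\cap K\) and a tail, whereas the paper never splits at all --- it proves a single uniform deviation inequality, Proposition \ref{prop.majorationdiffdeL} (applied here with \(\rho_n=1\)), whose supremum ranges over the \emph{entire} complement \(B^c(\theta_0,\delta)\) inside \(\R\times\interff{\underline{u}}{\overline{u}}\times\R_+^*\). The non-compact directions (\(\sigma^2\to 0\) or \(+\infty\), \(|\gamma|\to\infty\)) are handled inside that proposition's proof by showing that any maximising sequence with an escaping coordinate forces the normalised log-likelihood ratio \(i_n\) to \(-\infty\), so the supremum is attained at a point that stays in a fixed compact set; this is exactly the ``explicit Gaussian quadratic form'' argument you anticipate, and you are right that it is the estimate deferred to the technical section. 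Two corrections to your sketch: Assumption (A4) does not do the work you assign it on the tail --- confining \(\widehat{\theta}_n\) to \(K\) gives no upper bound on \(M_n\) off \(K\), and indeed the paper's tail control nowhere uses (A4) in that way --- and prior integrability is not an extra ingredient, since \(\pi\) is a probability distribution so a uniform sup bound on \(M_n\) over the tail suffices. What the paper's packaging buys is reusability: the same proposition with \(\rho_n=d_n\to 0\) is what later drives the posterior normality proof (Theorem \ref{theo.asympposteriorpseudo}), so the stronger uniform statement is proved once and serves both; what your organisation buys is a more transparent, classical Wald-style consistency argument whose limit contrast is exhibited explicitly.
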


\begin{proof}[Proof for Theorem \ref{theo.strongconsistencyposterior}]The proof is very similar to the one given in \cite{GhoshBNP} for a model with i.i.d. observations.
Let \(\delta > 0\) small enough so that \(B(\theta_0, \delta) \subset U\). Since
\begin{align*}
\int_{U} \pi(\theta|X_{1:n}) \ud \theta
&=\dfrac{1}{1 + \dfrac{\int_{U^c} \pi(\theta) \exp[l_{1:n}(X_{1:n}|\theta)-l_{1:n}(X_{1:n}|\theta_0)] \ud \theta}{\int_{U} \pi(\theta) \exp[l_{1:n}(X_{1:n}|\theta)-l_{1:n}(X_{1:n}|\theta_0)] \ud \theta} } \\
& \leqslant \dfrac{1}{1 +\dfrac{\int_{B^c(\theta_0, \delta)} \pi(\theta) \exp[l_{1:n}(X_{1:n}|\theta)-l_{1:n}(X_{1:n}|\theta_0)] \ud \theta}{\int_{B(\theta_0, \delta)} \pi(\theta) \exp[l_{1:n}(X_{1:n}|\theta)-l_{1:n}(X_{1:n}|\theta_0)] \ud \theta} }
\end{align*}
it will suffice to show that
\begin{align}
\dfrac{\int_{B^c(\theta_0, \delta)} \pi(\theta) \exp[l_{1:n}(X_{1:n}|\theta)-l_{1:n}(X_{1:n}|\theta_0)] \ud \theta}{\int_{B(\theta_0, \delta)} \pi(\theta) \exp[l_{1:n}(X_{1:n}|\theta)-l_{1:n}(X_{1:n}|\theta_0)] \ud \theta} &\convps 0. \label{eq.fractiontrongconsistencyposterior}
\end{align}

To prove \eqref{eq.fractiontrongconsistencyposterior} we adequately majorate its numerator and minorate its denominator. The majoration mainly relies on Proposition \ref{prop.majorationdiffdeL} while the minoration is derived without any major difficulties. The comprehensive proof of \eqref{eq.fractiontrongconsistencyposterior} can be found in Section \ref{subsec:proof2} on page \pageref{subsec:proof2}.
\end{proof}

Let \(\theta\in\Theta\), we now define \(I(\theta)\), the asymptotic Fisher Information matrix \(I(\theta)\) of the model, as the symmetric matrix given by
\begin{align}\label{eq.lemmaasympFishInfMatrix}
I(\theta) &=
\left[\begin{array}{ccc}
    \displaystyle \sigma^{-2} \int_{\underline{u}}^{u}(t-u)^2 \ud F(t) & \displaystyle -\sigma^{-2} \gamma \int_{\underline{u}}^{u}(t-u) \ud F(t) & 0\\
    & \displaystyle \sigma^{-2} \gamma^2 \int_{\underline{u}}^{u}1 \ud F(t) & 0 \\
    & & \displaystyle \frac{1}{2}\sigma^{-4}
    \end{array}\right].
\end{align}
It is obviously positive and definite since all its principal minor determinants are positive. The proof of the fact that it is indeed the limiting matrix of the Fisher Information matrix of the model is deferred to Lemma \ref{lemma.Bn}.

\begin{theorem}\label{theo.asympposteriorpb}
Let \(\pi(\cdot)\) be a prior distribution on \(\theta\), continuous and positive at \(\theta_0\), and let \(k_0\in\N\) such that
\begin{align*}
\int_\Theta \|\theta\|^{k_0} \pi(\theta) \ud \theta < +\infty,
\end{align*}
and denote
\begin{align}
t &= n^{\frac{1}{2}} (\theta-\widehat{\theta}_n), \label{eq.definitiont}
\end{align}
and \(\widetilde{\pi}_n(\cdot|X_{1:n})\) the posterior density of \(t\) given \(X_{1:n}\), then under Assumptions (A1)--(A4), for any \(0\leqslant k\leqslant k_0\), as \(n\conv+\infty\),
\begin{align}\label{eq.asympposterior}
\int_{\R^3} \|t\|^k \left|\widetilde{\pi}_n(t|X_{1:n}) - (2\pi)^{-\frac{3}{2}}|I(\theta_0)|^{\frac{1}{2}}e^{-\frac{1}{2}t^\prime I(\theta_0) t}\right| \ud t \convproba 0,
\end{align}
where \(I(\theta)\) is defined in \eqref{eq.lemmaasympFishInfMatrix} and \(\theta_0\) the true value of the parameter.
\end{theorem}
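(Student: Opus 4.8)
The plan is to adapt the Bernstein--von Mises argument of \cite{GhoshBNP} to our non-i.i.d. model, the one serious difficulty being that \(l_{1:n}\) is not differentiable in \(u\), which forces us to route the smooth part of the argument through the pseudo-problem. First I would reparametrise by \(t = n^{1/2}(\theta-\widehat\theta_n)\) and write the posterior density of \(t\) as the normalised ratio \(\widetilde\pi_n(t|X_{1:n}) = C_n^{-1}g_n(t)\), where
\begin{align*}
g_n(t) &= \pi\bigl(\widehat\theta_n + t/\sqrt n\bigr)\exp\bigl[l_{1:n}(X_{1:n}|\widehat\theta_n + t/\sqrt n) - l_{1:n}(X_{1:n}|\widehat\theta_n)\bigr], & C_n &= \int_{\R^3} g_n(t)\,\ud t.
\end{align*}
The entire statement \eqref{eq.asympposterior} then reduces to the unnormalised convergence
\begin{align}\label{eq.claimstar}
\int_{\R^3}\|t\|^k\bigl|g_n(t) - \pi(\theta_0)\,e^{-\frac{1}{2} t^\prime I(\theta_0) t}\bigr|\,\ud t &\convproba 0, \qquad 0\le k\le k_0.
\end{align}
Indeed, taking \(k=0\) in \eqref{eq.claimstar} yields \(C_n \convproba c_0 := \pi(\theta_0)(2\pi)^{3/2}|I(\theta_0)|^{-1/2}>0\), and since \(c_0\,(2\pi)^{-3/2}|I(\theta_0)|^{1/2}e^{-\frac{1}{2} t^\prime I(\theta_0)t} = \pi(\theta_0)e^{-\frac{1}{2} t^\prime I(\theta_0)t}\), the decomposition \(C_n^{-1}g_n - (2\pi)^{-3/2}|I(\theta_0)|^{1/2}e^{-\frac{1}{2} t^\prime I(\theta_0)t} = C_n^{-1}\bigl(g_n - \pi(\theta_0)e^{-\frac{1}{2} t^\prime I(\theta_0)t}\bigr) + (c_0 C_n^{-1}-1)(2\pi)^{-3/2}|I(\theta_0)|^{1/2}e^{-\frac{1}{2} t^\prime I(\theta_0)t}\), integrated against \(\|t\|^k\), is controlled by \eqref{eq.claimstar} and by the finiteness of the Gaussian moments.

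To establish \eqref{eq.claimstar} I would split \(\R^3\) into three regions: a fixed ball \(\{\|t\|\le M\}\), an intermediate shell \(\{M < \|t\| < \delta\sqrt n\}\), and the tail \(\{\|t\|\ge\delta\sqrt n\}\). On the tail, reverting to the \(\theta\) variable turns the integral into \(n^{(3+k)/2}\int_{\|\theta-\widehat\theta_n\|\ge\delta}\|\theta-\widehat\theta_n\|^k\,\pi(\theta)\exp[l_{1:n}(X_{1:n}|\theta)-l_{1:n}(X_{1:n}|\widehat\theta_n)]\,\ud\theta\); the strong consistency of \(\widehat\theta_n\) (Section \ref{sec:strongconsMLE}) places \(\widehat\theta_n\) inside \(B(\theta_0,\delta/2)\) eventually, and the exponential majoration of the likelihood ratio from Proposition \ref{prop.majorationdiffdeL} beats the polynomial factor \(n^{(3+k)/2}\), the prior mass being finite thanks to the moment assumption \(\int_\Theta\|\theta\|^{k_0}\pi(\theta)\ud\theta<+\infty\). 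On the intermediate shell, a uniform quadratic upper bound \(l_{1:n}(X_{1:n}|\widehat\theta_n + t/\sqrt n) - l_{1:n}(X_{1:n}|\widehat\theta_n)\le -c\|t\|^2\) (valid once \(\theta\) ranges over a small enough neighbourhood) dominates the integrand by \(\|t\|^k e^{-c\|t\|^2}\), whose integral over \(\{\|t\|>M\}\) is made arbitrarily small by taking \(M\) large.

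On the fixed ball \(\{\|t\|\le M\}\) I would use a second-order expansion of \(l_{1:n}(X_{1:n}|\widehat\theta_n + t/\sqrt n)\) about \(\widehat\theta_n\): the first-order term vanishes because \(\widehat\theta_n\) is the MLE, and the quadratic term equals \(-\tfrac{1}{2} t^\prime A_n t\) with \(A_n = -\tfrac1n\nabla^2 l_{1:n}(X_{1:n}|\widehat\theta_n)\convproba I(\theta_0)\), the convergence of the normalised observed information being exactly Lemma \ref{lemma.Bn} combined with the consistency of \(\widehat\theta_n\). With the continuity and positivity of \(\pi\) at \(\theta_0\), this gives \(g_n(t)\to\pi(\theta_0)e^{-\frac{1}{2} t^\prime I(\theta_0)t}\) uniformly on \(\{\|t\|\le M\}\), so dominated convergence kills the contribution of the ball for each fixed \(M\). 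Here lies the main obstacle: the indicator \(\one_{\interfo{t_i}{+\infty}}(u)\) makes \(l_{1:n}\) non-differentiable in \(u\), so neither the Taylor expansion nor the quadratic bound is literally available for the full problem. This is precisely what the pseudo-problem is designed to repair: removing the fraction of observations whose \(t_i\) lie in a shrinking neighbourhood of \(u_0\) yields a likelihood that is smooth in \(u\) on the relevant range, for which the expansion and the bound above hold verbatim.

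It remains to transfer \eqref{eq.claimstar} from the pseudo-problem to the problem. I would show that the two log-likelihood increments differ by \(\pop(1)\) uniformly for \(\|t\|\le\delta\sqrt n\) --- the discarded observations contribute negligibly because their number is \(\po(n)\) and \(F\) is continuously differentiable near \(u_0\) (Assumptions (A1), (A3)) --- and that the two MLEs share the same asymptotic behaviour, as established in Section \ref{sec:asympdistMLE}. Feeding these equivalences into the three-region estimate above yields \eqref{eq.claimstar} for the full problem and for every \(0\le k\le k_0\), and the normalising-constant reduction of the first paragraph then delivers \eqref{eq.asympposterior}. I expect the delicate point throughout to be the uniform (in \(t\), over a range growing like \(\sqrt n\)) comparison of the genuine and regularised log-likelihoods, since that is where the non-differentiability is actually absorbed.
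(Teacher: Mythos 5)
Your overall architecture is the paper's: reduce to an unnormalised convergence statement, kill the far tail with Proposition \ref{prop.majorationdiffdeL}, do the Taylor/quadratic work on the regularised (pseudo-problem) likelihood using Lemma \ref{lemma.Bn} --- this is exactly Theorem \ref{theo.asympposteriorpseudo} --- and transfer to the full problem via the proximity of the two MLEs. Your tail and fixed-ball treatments are sound. But two steps fail as stated. The first is the geometry of your regions: the pseudo-likelihood is differentiable in \(u\) only for \(u\in D_n\), i.e. on \(\|\theta-\theta_0\|\lesssim d_n\) (that is the entire point of the deletion and of conditions \eqref{eq.definitiondn}), so the Taylor expansion, the convergence of the normalised Hessian (Lemma \ref{lemma.Bn}, with the third-derivative control of Lemma \ref{lemma.dominationloglikethirdderiv}) and the quadratic upper bound they yield are available only for \(\|t\|\lesssim n^{\frac{1}{2}}d_n\), not on your whole shell \(M<\|t\|<\delta n^{\frac{1}{2}}\). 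Since \(d_n\conv 0\), the annulus \(\delta n^{\frac{1}{2}}d_n\leqslant\|t\|<\delta n^{\frac{1}{2}}\) is covered by none of your three estimates. This is precisely why the paper cuts at \(\delta n^{\frac{1}{2}}d_n\) (its regions \(T_1(\delta)\), \(T_2(\delta)\)) and disposes of everything outside that radius with Proposition \ref{prop.majorationdiffdeL} applied at scale \(\rho_n=d_n\), the inequality \(nd_n^2\geqslant\log^2 n\) beating the polynomial factor \(n^{\frac{k+1}{2}}\); your plan needs the same cut.

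The second gap is the transfer. The full and pseudo log-likelihood increments do \emph{not} differ by \(\pop(1)\) uniformly over \(\|t\|\leqslant\delta n^{\frac{1}{2}}\): their difference is the increment, between \(\widehat{\theta}_n\) and \(\widehat{\theta}_n+n^{-\frac{1}{2}}t\), of the log-likelihood of the \(n^{**}\) deleted observations, and \(n^{**}\sim n d_n f(u_0)\conv+\infty\); at \(\|t\|\asymp n^{\frac{1}{2}}\), i.e. at parameters a fixed distance from \(\widehat{\theta}_n\), this difference is of order \(n^{**}\), not \(\pop(1)\). The uniform comparison is true only on fixed balls \(\|t\|\leqslant M\) (there the systematic part is \(\go(d_nM^2)\) and the noise part \(\gop(M d_n^{\frac{1}{2}})\), both vanishing because the deleted fraction \(n^{**}/n\conv 0\)); outside fixed balls you must dominate each problem separately rather than compare them, which your sketch does not provide. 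Note that the paper's own transfer bypasses likelihood comparison entirely: setting \(\alpha_n=n^{\frac{1}{2}}(\widehat{\theta}_n^*-\widehat{\theta}_n)\), it invokes the shift relation \eqref{eq.relationpinetpinstar} between the posterior densities of \(t\) and \(t^*\), so that after Theorem \ref{theo.asympposteriorpseudo} the only remaining task is \eqref{eq.diffdedeuxgaussiennes}, the stability of the Gaussian moment integrals under the shift \(\alpha_n\convproba 0\) guaranteed by \eqref{eq.proximityproba} --- a one-line dominated-convergence argument. Re-cutting your shell at \(\delta n^{\frac{1}{2}}d_n\) and either restricting your comparison claim to fixed balls with separate domination elsewhere, or replacing it by the paper's shift argument, would close these gaps.
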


The proof Theorem \ref{theo.asympposteriorpb} relies on the consistency of the pseudo-problem, first introduced in \cite{Sylwester}, that we define in the next few paragraphs.

\subsection{Pseudo-problem}\label{subsec.pseudopb}
The major challenge in proving Theorem \ref{theo.asympposteriorpb} is that the typical arguments usually used to derive the asymptotic behaviour of the posterior distribution \cite[see][for example]{Ghosh} do not directly apply here. The proof provided by \citeauthor{Ghosh} requires a Taylor expansion of the likelihood of the model up to the third order at the MLE, and the likelihood of the model we consider here at the \(n\)-th step is very obviously not continuously differentiable w.r.t. \(u\) in each observed temperature \(t_i\), \(i=1,\ldots,n\). Note that the problem only grows worse as the number of observations increases.

To overcome this difficulty we follow the original idea first introduced in \cite{Sylwester}, and later used again in \cite{Feder}: we introduce a pseudo-problem for which we are able to recover the classical results and show that the differences between the estimates for the problem and the pseudo-problem are, in a sense, negligeable. The pseudo-problem is obtained by deleting all the observations within intervals \(D_n\) of respective sizes \(d_n\) centred around \(u_0\). The intervals \(D_n\) are defined as
\begin{align*}
D_n &= \Interoo{u_0-\frac{d_n}{2}}{u_0+\frac{d_n}{2}},
\end{align*}
and their sizes \(d_n\) are chosen such that as \(n\conv +\infty\)
\begin{align}
d_n &\conv 0, &n^{-\frac{1}{2}} (\log n) \cdot d_n^{-1} &\conv 0. \label{eq.definitiondn}
\end{align}
This new problem is called pseudo-problem because the value of \(u_0\) is unknown and we therefore cannot in practice delete these observations. Note that the actual choice of the sequence \((d_n)_{n\in\N}\) does not influence the rest of the results in any way, as long as it satisfies to conditions \eqref{eq.definitiondn}. It thus does not matter at all whether one chooses (for instance) \(d_n = n^{-\frac{1}{4}}\) or \(d_n = \log^{-1} n\).

Let us denote \(n^{**}\) the number of observations deleted from the original problem, and \(n^*=n-n^{**}\) the sample size of the pseudo-problem. Generally speaking, quantities annotated with a single asterisk \(^*\) will refer to the pseudo-problem. \(l_{1:n}^*(X_{1:n}|\theta)\) will thus designate the likelihood of the pseudo-problem i.e. (reindexing observations whenever necessary)
\begin{align}
l_{1:n}^*(X_{1:n}|\theta) &= -\frac{n^*}{2}\log\left(2\pi\sigma^2\right) - \sum_{i=1}^{n^*}  \frac{1}{2\sigma^2}\left(X_i-\gamma\cdot (t_i - u)\one_{\interfo{t_i}{+\infty}}(u)\right)^2.
\end{align}

On one hand, from an asymptotic point of view, the removal of those \(n^{**}\) observations should not have any kind of impact on the distribution theory. The intuitive idea is that deleting \(n^{**}\) observations takes away only a fraction \(n^{**}/n\) of the information which asymptotically approaches zero as will be shown below. The first condition \eqref{eq.definitiondn} seems only a natural requirement if we ever hope to prove that the MLE for the problem and the pseudo-problem behave asymptotically in a similar manner (we will show they do in Theorem \ref{theo.asympdist}, see equation \eqref{eq.proximityproba}).

On the other hand, assuming the MLE is consistent (we will show it is, in Theorem \ref{theo.strongconsistencywithrate}) and assuming that the sizes \(d_n\) are carefully chosen so that the sequence \((\widehat{u}_n)_{n\in\N}\) falls into the designed sequence of intervals \((D_n)_{n\in\N}\) (see Proposition \ref{prop.asympdistpseudo}, whose proof the second condition \eqref{eq.definitiondn} is tailored for), these regions will provide open neighbourhoods of the MLE over which the likelihood of the pseudo-problem will be differentiable. The pseudo-problem can therefore be thought of as a locally regularised version of the problem (locally because we are only interested in the differentiability of the likelihood over a neighbourhood of the MLE). We should thus be able to retrieve the usual results for the pseudo-problem with a bit of work. It will be shown that this is indeed the case (see Theorem \ref{theo.asympposteriorpseudo}).

If the sequence \((d_n)_{n\in\N}\) satisfies to conditions \eqref{eq.definitiondn}, then as \(n\conv+\infty\),
\begin{align*}
\frac{n^{**}}{n} &\conv 0, &\frac{n^{*}}{n} &\conv 1.
\end{align*}

Using the uniform convergence of \(F_n\) to \(F\)  over any compact subset (see Assumption (A1), and its Remark 1), we indeed find via a Taylor-Lagrange approximation
\begin{align*}
\frac{n^{**}}{n}
&= F_n\left(u_0+\frac{d_n}{2}\right) - F_n\left(u_0-\frac{d_n}{2}\right)          \\
&= F  \left(u_0+\frac{d_n}{2}\right) - F  \left(u_0-\frac{d_n}{2}\right) + \po(1) \\
&= d_n \cdot f(u_n) + \po(1),
\end{align*}
where \(u_n \in D_n\), so that in the end, since \(u_n\conv u_0\) and \(f\) is continuous and positive at \(u_0\), we have a.s.
\begin{align*}
\frac{n^{**}}{n}
&= d_n \cdot (f(u_0) + \po(1)) + \po(1) \conv 0.
\end{align*}

We now recover the asymptotic normality of the posterior distribution for the pseudo problem. 

\begin{theorem}\label{theo.asympposteriorpseudo}
Let \(\pi(\cdot)\) be a prior distribution on \(\theta\), continuous and positive at \(\theta_0\), and let \(k_0\in\N\) such that
\begin{align*}
\int_\Theta \|\theta\|^{k_0} \pi(\theta) \ud \theta < +\infty.
\end{align*}
and denote
\begin{align}
t^* &= n^{\frac{1}{2}} (\theta-\widehat{\theta}_n^*), \label{eq.definitiontstar}
\end{align}
and \(\widetilde{\pi}_n^*(\cdot|X_{1:n})\) the posterior density of \(t^*\) given \(X_{1:n}\), then under Assumptions (A1)--(A4) and conditions \eqref{eq.definitiondn}, for any \(0\leqslant k\leqslant k_0\), as \(n\conv+\infty\),
\begin{align}\label{eq.asympposteriorpseudo}
\int_{\R^3} \|t\|^{k} \left|\widetilde{\pi}_n^*(t|X_{1:n}) - (2\pi)^{-\frac{3}{2}}|I(\theta_0)|^{\frac{1}{2}}e^{-\frac{1}{2}t^\prime I(\theta_0) t}\right| \ud t \convps 0,
\end{align}
where \(I(\theta)\) is defined in \eqref{eq.lemmaasympFishInfMatrix}.
\end{theorem}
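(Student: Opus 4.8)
The plan is to adapt the classical Bayesian asymptotic normality argument of \citeauthor{Ghosh} to the pseudo-problem, exploiting the crucial fact that, by construction of the intervals \(D_n\) together with the second condition in \eqref{eq.definitiondn}, the MLE \(\widehat{\theta}_n^*\) eventually falls strictly inside the region where the pseudo-likelihood \(l_{1:n}^*\) is smooth in \(u\). This is the whole point of the regularisation: once we know (from the strong consistency with rate, Theorem \ref{theo.strongconsistencywithrate}, combined with Proposition \ref{prop.asympdistpseudo}) that \(\widehat{u}_n^* \in D_n\) eventually, the indicator functions \(\one_{\interfo{t_i}{+\infty}}(u)\) appearing in \(l_{1:n}^*\) are locally constant around \(\widehat{u}_n^*\), so the pseudo-likelihood is genuinely three-times continuously differentiable in a neighbourhood of \(\widehat{\theta}_n^*\), and the \citeauthor{Ghosh} machinery becomes applicable.

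First I would set up the change of variables \(t^* = n^{1/2}(\theta - \widehat{\theta}_n^*)\) and write the posterior density \(\widetilde{\pi}_n^*(t^*|X_{1:n})\) explicitly as a normalised product of the prior and \(\exp[l_{1:n}^*(X_{1:n}|\theta) - l_{1:n}^*(X_{1:n}|\widehat{\theta}_n^*)]\). Next, on the good event where \(\widehat{\theta}_n^*\) lies in the smooth region, I would perform a third-order Taylor expansion of \(l_{1:n}^*\) about \(\widehat{\theta}_n^*\); the first-order term vanishes because \(\widehat{\theta}_n^*\) is the maximiser, the second-order term produces the empirical Fisher information, and the third-order remainder is controlled uniformly on compacta. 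The key analytic input here is that the normalised second derivative matrix of \(-l_{1:n}^*\) converges almost surely to \(I(\theta_0)\); this is exactly the content of Lemma \ref{lemma.Bn} (the limiting Fisher information \eqref{eq.lemmaasympFishInfMatrix}), and the removal of the \(n^{**}/n \to 0\) fraction of observations does not perturb this limit. With these ingredients the integrand converges pointwise to the Gaussian density \((2\pi)^{-3/2}|I(\theta_0)|^{1/2}\exp(-\tfrac12 t'I(\theta_0)t)\).

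The remaining work is to upgrade pointwise convergence of the integrand to convergence of the weighted \(L^1\) integrals in \eqref{eq.asympposteriorpseudo}, for every \(0 \leqslant k \leqslant k_0\). The standard route is to split \(\R^3\) into a large ball \(\{\|t^*\| \leqslant M\}\), a moderate annulus, and a far tail. On the large ball one invokes the local Taylor expansion and dominated convergence with the polynomial weight \(\|t^*\|^k\); the moment hypothesis \(\int_\Theta \|\theta\|^{k_0}\pi(\theta)\,\ud\theta < +\infty\) is precisely what guarantees integrability of \(\|t^*\|^k\) against the prior for \(k \leqslant k_0\). On the tails one needs a uniform exponential bound on \(\exp[l_{1:n}^*(X_{1:n}|\theta) - l_{1:n}^*(X_{1:n}|\widehat{\theta}_n^*)]\) away from \(\widehat{\theta}_n^*\), of the sort furnished by the strong-consistency estimates (the pseudo-problem analogue of Proposition \ref{prop.majorationdiffdeL}), to show the tail contribution vanishes almost surely.

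The main obstacle I expect is the tail control rather than the local expansion. Near \(\widehat{\theta}_n^*\) everything reduces to the smooth, regular case and \citeauthor{Ghosh} applies verbatim; but away from the MLE the pseudo-likelihood is still only piecewise smooth in \(u\), so one must bound \(l_{1:n}^*(X_{1:n}|\theta) - l_{1:n}^*(X_{1:n}|\widehat{\theta}_n^*)\) globally and uniformly, and then check that the polynomial weight \(\|t^*\|^k\) cannot overcome the exponential decay. Verifying that the exponential bound is strong enough to kill the weighted tails uniformly \emph{almost surely} (to obtain the \(\convps\) conclusion, not merely \(\convproba\)) is the delicate point, and this is where the strong consistency of the pseudo-problem MLE and the uniform convergence of \(F_n\) to \(F\) from Assumption (A1) do the heavy lifting.
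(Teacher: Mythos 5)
Your plan is essentially the paper's own proof: the paper writes the posterior of \(t^*\) explicitly after the change of variables, uses the strong consistency with rate (Theorem \ref{theo.strongconsistencywithrate}) and conditions \eqref{eq.definitiondn} to place \(\widehat{\theta}_n^*\) inside the shrinking smoothness region, performs the third-order Taylor expansion at \(\widehat{\theta}_n^*\) with \(n^{-1}B_{1:n}^*(\widehat{\theta}_n^*)\convps I(\theta_0)\) (Lemma \ref{lemma.Bn}) and a uniform third-derivative bound (Lemma \ref{lemma.dominationloglikethirdderiv}), and kills the tail by the exponential bound of Proposition \ref{prop.majorationdiffdeL} applied with \(\rho_n=d_n\) together with the prior-moment hypothesis --- precisely the ingredients you list. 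The one detail to make explicit is that the split between your local region and the far tail must occur at radius \(\delta n^{\frac{1}{2}}d_n\) in \(t\)-space (not at a fixed \(\delta n^{\frac{1}{2}}\) as in the classical i.i.d.\ argument), since the Taylor expansion and Gaussian domination are valid only on \(B(\theta_0,\delta d_n)\); the tail contribution is then of order \(n^{\frac{k+1}{2}}e^{-\epsilon n d_n^2}\leqslant n^{\frac{k+1}{2}}n^{-\epsilon\log n}\conv 0\), which is where the second condition of \eqref{eq.definitiondn} and the moment condition on \(\pi\) (needed for the tail integral, not for the bounded ball) are actually consumed.
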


\begin{proof}[Proof of Theorem \ref{theo.asympposteriorpseudo}]
The extensive proof, to be found in Section \ref{subsec:proof2}, was inspired by that of Theorem 4.2 in \cite{Ghosh} which deals with the case where the observations \(X_1,\ldots,X_n\) are independent and identically distributed and where the (univariate) log-likelihood is differentiable in a fixed small neighbourhood of \(\theta_0\). We tweaked the original proof of \citeauthor{Ghosh} so that we could deal with independent but not identically distributed observations and a (multivariate) log-likelihood that is guaranteed differentiable only on a decreasing small neighbourhood of \(\theta_0\).
\end{proof}

%

\subsection{From the pseudo-problem to the original problem}
We now give a short proof of Theorem \ref{theo.asympposteriorpb}. As we previously announced, it relies upon its counterpart for the pseudo-problem, i.e. Theorem \ref{theo.asympposteriorpseudo}.
\begin{proof}[Proof of Theorem \ref{theo.asympposteriorpb}]
Recalling the definition of \(t\) and \(t^*\) given in \eqref{eq.definitiont} and \eqref{eq.definitiontstar} we observe that
\begin{align*}
t &= t^* + n^{\frac{1}{2}}(\widehat{\theta}_n^*-\widehat{\theta}_n).
\end{align*}
Thus the posterior distribution of \(t^*\) and that of \(t\), given \(X_{1:n}\) are linked together via
\begin{align}
\widetilde{\pi}_n(t|X_{1:n}) &= \widetilde{\pi}_n^*(t - \alpha_n|X_{1:n})\label{eq.relationpinetpinstar}
\intertext{where}
\alpha_n &= n^{\frac{1}{2}}(\widehat{\theta}_n^*-\widehat{\theta}_n). \nonumber
\end{align}
Relationship \eqref{eq.relationpinetpinstar} allows us to write
\begin{align*}
&~\int_{\R^3} \|t\|^k \left|\widetilde{\pi}_n(t|X_{1:n}) - (2\pi)^{-\frac{3}{2}}|I(\theta_0)|^{\frac{1}{2}}e^{-\frac{1}{2}t^\prime I(\theta_0) t}\right| \ud t \\
&= \int_{\R^3} \|t\|^k \left|\widetilde{\pi}_n^*(t - \alpha_n|X_{1:n}) - (2\pi)^{-\frac{3}{2}}|I(\theta_0)|^{\frac{1}{2}}e^{-\frac{1}{2}t^\prime I(\theta_0) t}\right| \ud t \\
&= \int_{\R^3} \|t+\alpha_n\|^k \left|\widetilde{\pi}_n^*(t|X_{1:n}) - (2\pi)^{-\frac{3}{2}}|I(\theta_0)|^{\frac{1}{2}}e^{-\frac{1}{2}(t+\alpha_n)^\prime I(\theta_0) (t+\alpha_n)}\right| \ud t \\
&\leqslant \int_{\R^3} \|t+\alpha_n\|^k \left|\widetilde{\pi}_n^*(t|X_{1:n}) - (2\pi)^{-\frac{3}{2}}|I(\theta_0)|^{\frac{1}{2}}e^{-\frac{1}{2}t^\prime I(\theta_0) t}\right| \ud t \\
&\qquad + (2\pi)^{-\frac{3}{2}}|I(\theta_0)|^{\frac{1}{2}} \int_{\R^3} \|t+\alpha_n\|^k \left|e^{-\frac{1}{2}(t+\alpha_n)^\prime I(\theta_0) (t+\alpha_n)} - e^{-\frac{1}{2}t^\prime I(\theta_0) t}\right| \ud t
\end{align*}
Theorem \ref{theo.asympposteriorpseudo} ensures that the first integral on the right hand side of this last inequality goes to zero in probability. It therefore suffices to show that the second integral goes to zero in probability to end the proof, i.e. that as \(n\conv+\infty\)
\begin{align}
\int_{\R^3} \|t+\alpha_n\|^k \left|e^{-\frac{1}{2}(t+\alpha_n)^\prime I(\theta_0) (t+\alpha_n)} - e^{-\frac{1}{2}t^\prime I(\theta_0) t}\right| \ud t &\convproba 0. \label{eq.diffdedeuxgaussiennes}
\end{align}
But the proof of \eqref{eq.diffdedeuxgaussiennes} is straightforward knowing that \(\alpha_n \convproba 0\) (see \eqref{eq.proximityproba}) and using dominated convergence.
\end{proof}

As an immediate consequence of Theorem \ref{theo.asympposteriorpb} we want to mention the weak consistency of the Bayes estimator.
\begin{corollary}\label{cor.bayesestpb}
Let \(\pi(\cdot)\) a prior distribution on \(\theta\), continuous and positive at \(\theta_0\), such that
\begin{align*}
\int_\Theta \|\theta\| \pi(\theta) \ud \theta < +\infty,
\end{align*}
and denote
\begin{align*}
\widetilde{\theta}_n = \int_{\Theta} \theta \pi_n(\theta|X_{1:n}) \ud\theta,
\end{align*}
the Bayes estimator of \(\theta\) in the problem. Then under Assumptions (A1)--(A4), as \(n\conv+\infty\),
\begin{align*}
n^{\frac{1}{2}} (\widetilde{\theta}_n - \widehat{\theta}_n) \convproba 0.
\end{align*}
\end{corollary}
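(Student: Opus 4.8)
The plan is to express $n^{\frac{1}{2}}(\widetilde{\theta}_n - \widehat{\theta}_n)$ as the posterior mean of the rescaled parameter $t = n^{\frac{1}{2}}(\theta - \widehat{\theta}_n)$ and to recognise that this quantity is controlled by the $k=1$ instance of Theorem~\ref{theo.asympposteriorpb}.

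First I would carry out the linear change of variables $\theta = \widehat{\theta}_n + n^{-\frac{1}{2}} t$ in the definition of $\widetilde{\theta}_n$. Since $\widetilde{\pi}_n(\cdot|X_{1:n})$ is by construction the posterior density of $t$, and since the posterior integrates to one, this gives
\[ n^{\frac{1}{2}}(\widetilde{\theta}_n - \widehat{\theta}_n) = \int_{\R^3} t\, \widetilde{\pi}_n(t|X_{1:n})\, \ud t. \]
The moment hypothesis $\int_\Theta \|\theta\|\,\pi(\theta)\,\ud\theta < +\infty$ is exactly the condition of Theorem~\ref{theo.asympposteriorpb} with $k_0 = 1$; it ensures that $\widetilde{\theta}_n$ is well-defined and that the integral above is absolutely convergent.

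Next I would exploit that the limiting Gaussian density $(2\pi)^{-\frac{3}{2}}|I(\theta_0)|^{\frac{1}{2}} e^{-\frac{1}{2}t^\prime I(\theta_0) t}$ is centred at the origin, so that its first moment vanishes. Subtracting this null term and applying the triangle inequality under the integral sign yields
\[ \left\| n^{\frac{1}{2}}(\widetilde{\theta}_n - \widehat{\theta}_n) \right\| \leqslant \int_{\R^3} \|t\| \left| \widetilde{\pi}_n(t|X_{1:n}) - (2\pi)^{-\frac{3}{2}}|I(\theta_0)|^{\frac{1}{2}} e^{-\frac{1}{2}t^\prime I(\theta_0) t} \right| \ud t. \]
The right-hand side is precisely the expression appearing in \eqref{eq.asympposterior} with $k = 1$, which Theorem~\ref{theo.asympposteriorpb} asserts converges to $0$ in probability. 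This immediately gives $n^{\frac{1}{2}}(\widetilde{\theta}_n - \widehat{\theta}_n) \convproba 0$.

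I do not expect any serious obstacle: the result is essentially a direct corollary of Theorem~\ref{theo.asympposteriorpb}, the two delicate points being the validity of the (linear, invertible) change of variables and the finiteness of the first absolute moment of the limiting Gaussian, both of which are routine. The only genuinely substantive ingredient, namely the $L^1$-convergence of the rescaled posterior density against the Gaussian in the weighted norm $\|t\|\,\ud t$, has already been supplied by Theorem~\ref{theo.asympposteriorpb}.
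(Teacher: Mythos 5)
Your proposal is correct and follows essentially the same route as the paper: the paper likewise rewrites \(n^{\frac{1}{2}}(\widetilde{\theta}_n - \widehat{\theta}_n)\) as \(\int_{\R^3} t\, \widetilde{\pi}_n(t|X_{1:n})\, \ud t\) and invokes Theorem \ref{theo.asympposteriorpb} with \(k_0=1\). Your write-up merely makes explicit the intermediate step (subtracting the zero-mean Gaussian and bounding by the weighted \(L^1\) distance) that the paper compresses into the phrase ``direct consequence.''
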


\begin{proof}[Proof of Corollary \ref{cor.bayesestpb}]
By definition,
\begin{align*}
\widetilde{\theta}_n &= \int_{\Theta} \theta \pi_n(\theta|X_{1:n}) \ud\theta
\end{align*}
and this allows us to write
\begin{align*}
n^{\frac{1}{2}}(\widetilde{\theta}_n - \widehat{\theta}_n) &= \int_{\Theta} n^{\frac{1}{2}}(\theta - \widehat{\theta}_n) \pi_n(\theta|X_{1:n}) \ud\theta \\
&= \int_{\R^3} t \widetilde{\pi}_n(t|X_{1:n}) \ud t \convproba 0,
\end{align*}
the last convergence being a direct consequence of Theorem \ref{theo.asympposteriorpb} with \(k_0=1\).
\end{proof}

Observe that, under conditions \eqref{eq.definitiondn}, the same arguments naturally apply to the pseudo-problem and lead to a strong consistency (a.s. convergence) of its associated Bayes estimator due to Theorem \ref{theo.asympposteriorpseudo}, thus recovering the results of \cite{Ghosh} for the regularised version of the problem.

\section{Strong consistency of the MLE}\label{sec:strongconsMLE}
In this Section we prove the strong consistency of the MLE over any compact set including the true parameter (see Theorem \ref{theo.strongconsistency}). It is a prerequisite for a more accurate version of the strong consistency (see Theorem \ref{theo.strongconsistencywithrate}) which lies at the heart of the proof of Theorem \ref{theo.asympposteriorpseudo}.

\begin{theorem}\label{theo.strongconsistency}Under Assumptions (A1)--(A4), we have a.s., as \(n\conv+\infty\),
\begin{align*}
\|{\widehat\theta}_n-\theta_0\| &= \po(1).
\end{align*}
\end{theorem}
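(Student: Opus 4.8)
The plan is to run a classical Wald-type M-estimation argument, made available by Assumption (A4): since \(\widehat\theta_n\) eventually lies in the fixed compact set \(K\subset\Theta\), it suffices to study the normalised log-likelihood \(\theta\mapsto \frac1n l_{1:n}(X_{1:n}|\theta)\) on \(K\), where \(\sigma^2\) is bounded away from \(0\) and \(+\infty\) while \(\gamma\) and \(u\) range over compacts. I would show that this random function converges, uniformly over \(K\) and almost surely, to a deterministic limit \(L(\theta)\), and that \(L\) attains its maximum over \(K\) uniquely at \(\theta_0\); the strong consistency then follows from the standard argmax argument (any almost-sure limit point of \(\widehat\theta_n\) maximises \(L\), hence equals \(\theta_0\)). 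Note that, unlike the asymptotic-distribution results, this step needs only continuity of the likelihood, so the non-differentiability at the break-point — the obstruction that later forces the pseudo-problem — does not intervene here.

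First I would identify the limit. Writing \(X_i=\mu(\eta_0,t_i)+\xi_i\) and expanding the square in \eqref{eq.defloglikelihood1n} gives
\begin{align*}
\frac1n l_{1:n}(X_{1:n}|\theta) &= -\tfrac12\log(2\pi\sigma^2) \\
&\quad - \frac{1}{2\sigma^2}\left[\frac1n\sum_{i=1}^n(\mu(\eta_0,t_i)-\mu(\eta,t_i))^2 + \frac2n\sum_{i=1}^n(\mu(\eta_0,t_i)-\mu(\eta,t_i))\xi_i + \frac1n\sum_{i=1}^n\xi_i^2\right].
\end{align*}
Since \((\eta,t)\mapsto\mu(\eta,t)=\gamma(t-u)\one_{\interfo{t}{+\infty}}(u)\) is bounded and continuous in \(t\) on \(\interff{\underline u}{\overline u}\) (the linear factor vanishes exactly where the indicator jumps), Remark 2 yields \(\frac1n\sum_{i=1}^n(\mu(\eta_0,t_i)-\mu(\eta,t_i))^2\convn D(\eta,\eta_0):=\int_{\underline u}^{\overline u}(\mu(\eta_0,t)-\mu(\eta,t))^2 f(t)\ud t\); the strong law gives \(\frac1n\sum\xi_i^2\convps\sigma_0^2\); and for each fixed \(\theta\) the centred cross term tends to \(0\) almost surely by Kolmogorov's strong law, its coefficients being bounded. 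Hence, pointwise, \(L(\theta)=-\tfrac12\log(2\pi\sigma^2)-\frac{1}{2\sigma^2}\bigl(D(\eta,\eta_0)+\sigma_0^2\bigr)\).

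Next I would establish identifiability, i.e.\ that \(L\) is uniquely maximised at \(\theta_0\). For fixed \(\eta\), maximising \(L\) in \(\sigma^2\) gives \(\sigma^2=D(\eta,\eta_0)+\sigma_0^2\), with maximal value \(-\tfrac12\log\bigl(2\pi(D(\eta,\eta_0)+\sigma_0^2)\bigr)-\tfrac12\), a strictly decreasing function of \(D(\eta,\eta_0)\geqslant0\). So the maximum is attained exactly where \(D(\eta,\eta_0)=0\), that is where \(\mu(\eta,\cdot)=\mu(\eta_0,\cdot)\) \(f\)-almost everywhere. By Assumption (A3), \(f>0\) on \(\interoo{\underline u}{\overline u}\); hence if \(u\neq u_0\) (say \(u<u_0\)) the two functions disagree on the interval \(\interoo{u}{u_0}\), which has positive \(F\)-measure and on which \(\mu(\eta_0,\cdot)=\gamma_0(\cdot-u_0)\neq0\) because \(\gamma_0\neq0\) by (A2). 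This forces \(u=u_0\), whence matching on \(\interoo{\underline u}{u_0}\) gives \(\gamma=\gamma_0\), and finally \(\sigma^2=\sigma_0^2\); so \(\theta_0\) is the unique maximiser.

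The main obstacle is upgrading the pointwise a.s.\ convergence of \(\frac1n l_{1:n}\) to a uniform a.s.\ convergence over \(K\), which the argmax step requires. Here the crucial structural fact is that \(\eta\mapsto\mu(\eta,t)\) is Lipschitz in \((\gamma,u)\) with a constant uniform in \(t\in\interff{\underline u}{\overline u}\) and in \(\eta\in K\) (the \(u\)-derivative is \(-\gamma\), the \(\gamma\)-derivative is bounded by \(\overline u-\underline u\)). For the quadratic term this makes \(\eta\mapsto(\mu(\eta_0,t)-\mu(\eta,t))^2\) equi-Lipschitz with bounded envelope, so a finite-net argument combined with equicontinuity promotes pointwise to uniform convergence. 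For the Gaussian cross term, regarded as a process indexed by \(\eta\in K\), the increments satisfy \(|Z_n(\eta)-Z_n(\eta')|\leqslant C\,\|\eta-\eta'\|\cdot\frac1n\sum_{i=1}^n|\xi_i|\), and since \(\frac1n\sum|\xi_i|\convps\esp|\xi_1|<+\infty\), the process is equicontinuous with an a.s.\ bounded modulus; together with a.s.\ convergence to \(0\) on a countable dense net this gives uniform a.s.\ convergence to \(0\). Once \(\sup_{\theta\in K}\bigl|\frac1n l_{1:n}(X_{1:n}|\theta)-L(\theta)\bigr|\convps0\) is secured, Assumption (A4) places \(\widehat\theta_n\) in \(K\) for \(n\) large, and the continuity and unique maximisation of \(L\) yield \(\widehat\theta_n\convps\theta_0\), i.e.\ \(\|\widehat\theta_n-\theta_0\|=\po(1)\).
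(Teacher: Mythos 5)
Your proposal is correct and is essentially the paper's own argument: the same expansion of the normalised log-likelihood after substituting \(X_i=\mu(\eta_0,t_i)+\xi_i\), the same limiting criterion (the paper's \(b(\theta)\) from Lemma~\ref{lemma.sylwesterbnandb} is exactly \(2(L(\theta_0)-L(\theta))\), and its positivity for \(\theta\neq\theta_0\) is your identifiability step based on (A2)--(A3)), and the same uniform control of the cross term \(\frac1n\sum_{i=1}^n[\mu(\eta_0,t_i)-\mu(\eta,t_i)]\xi_i\) via the Lipschitz/finite-net device (Lemmas~\ref{lemma.convpuissk}, \ref{lemma.convergenceuniformemu}, \ref{lemma.grid} and \ref{lemma.sylwestertheo35}). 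The only difference is packaging: you assemble these ingredients into a global uniform law of large numbers over \(K\) followed by the standard argmax argument, whereas the paper reaches the same uniformity by combining the equicontinuity-in-\(n\) of \(b_n\) with pointwise convergence \(b_n\conv b\) and a finite covering of \(K\setminus B(\theta_0,\delta)\) by small balls --- mathematically equivalent devices.
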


\begin{proof}[Proof of Theorem \ref{theo.strongconsistency}]
Recall that \(K\) is a compact subset of \(\Theta\), such that \(\widehat{\theta}_n \in K\) for any \(n\) large enough. We denote
\begin{align*}
l_{1:n}(X_{1:n}|S) &= \sup_{\theta\in S} l_{1:n}(X_{1:n}|\theta), \; \text{for any } S \subset K, \\
K_n(a) &= \left\{\theta\in\Theta,\; l_{1:n}(X_{1:n}|\theta) \geqslant \log a + l_{1:n}(X_{1:n}|K)\right\}, \; \text{for any } a\in\interoo{0}{1}.
\end{align*}
All we need to prove is that
\begin{align}
\exists a \in\interoo{0}{1},\; \proba\left(\lim_{n\conv+\infty} \sup_{\theta\in K_n(a)} \|\theta-\theta_0\| = 0 \right) &= 1. \label{eq.contrapositive}
\end{align}
since for any \(n\) large enough we have \({\widehat\theta}_n\in K_n(a)\) for any \(a\in\interoo{0}{1}\). We control the likelihood upon the complement of a small ball in \(K\) and prove the contrapositive of \eqref{eq.contrapositive} using compacity arguments. The extensive proof of \eqref{eq.contrapositive} is to be found in Section \ref{subsec:proof3} .
\end{proof}

We strengthen the result of Theorem \ref{theo.strongconsistency} by giving a rate of convergence for the MLE (see Theorem \ref{theo.strongconsistencywithrate}). This requires a rate of convergence for the image of the MLE through the regression function of the model, that we give in the Proposition \ref{prop.feder314} below.

\begin{proposition}\label{prop.feder314}
Under Assumptions (A1)--(A4), as \(n\conv+\infty\), a.s., for any open interval \(I\subset\interff{\underline{u}}{\overline{u}}\),
\begin{align*}
\min_{t_i\in I,\; i\leqslant n} \left|\mu(\widehat{\eta}_n, t_i) - \mu(\eta_0, t_i)\right| = \go\left(n^{-\frac{1}{2}}\log n\right).
\end{align*}
\end{proposition}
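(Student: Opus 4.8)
The plan is to exploit the optimality of the MLE as a least-squares estimator and then convert a global (summed) bound into the claimed pointwise minimum by an averaging argument. Write $D_i = \mu(\widehat\eta_n, t_i) - \mu(\eta_0, t_i)$. Since $\widehat\eta_n$ minimises $\sum_{i=1}^n (X_i - \mu(\eta,t_i))^2$ and $X_i = \mu(\eta_0,t_i) + \xi_i$, expanding $\sum_{i=1}^n (X_i - \mu(\widehat\eta_n,t_i))^2 \leqslant \sum_{i=1}^n (X_i - \mu(\eta_0,t_i))^2$ and cancelling $\sum_i \xi_i^2$ gives the basic inequality
\[
\sum_{i=1}^n D_i^2 \leqslant 2\left|\sum_{i=1}^n D_i\,\xi_i\right|.
\]
By Assumption (A4) we have $\widehat\eta_n \in K$ eventually, and by Theorem \ref{theo.strongconsistency} $\widehat\eta_n \to \eta_0$ a.s., so it suffices to control the right-hand side uniformly over $\eta$ in a (small) neighbourhood of $\eta_0$. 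Dividing by $(\sum_i D_i^2)^{1/2}$, the task reduces to a self-normalised Gaussian deviation bound: showing that
\[
\sup_{\eta}\; \frac{\left|\sum_{i=1}^n (\mu(\eta,t_i)-\mu(\eta_0,t_i))\,\xi_i\right|}{\left(\sum_{i=1}^n (\mu(\eta,t_i)-\mu(\eta_0,t_i))^2\right)^{1/2}} = \go(\log n)\quad\text{a.s.},
\]
after which the basic inequality yields $\sum_{i=1}^n D_i^2 = \go\big((\log n)^2\big)$ a.s.

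The crux, and the main obstacle, is this uniform deviation bound. For each fixed $\eta$ the numerator is a centred Gaussian with variance $\sigma^2\sum_i(\cdots)^2$, so the ratio is distributed as $\loi{N}(0,\sigma^2)$ and has Gaussian tails; the difficulty is the supremum over $\eta = (\gamma, u)$. The dependence on $\gamma$ (and on the intercept) is affine and contributes only finitely many directions, harmlessly handled. The genuine trouble is the break-point direction: $\mu(\eta,t) = \gamma\cdot(t-u)\one_{\interfo{t}{+\infty}}(u)$ is discontinuous in $u$ at each observed $t_i$, which is exactly the non-differentiability that forces the pseudo-problem elsewhere in the paper. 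I would discretise the compact parameter set and control the oscillation in the $u$-direction through the indicator class $u \mapsto \one_{\interfo{t_i}{+\infty}}(u)$: the process $u \mapsto \sum_{i=1}^n \xi_i\,\one_{\interfo{t_i}{+\infty}}(u)$ takes at most $n+1$ distinct values and is a reordered partial sum of the i.i.d.\ Gaussian noise, so a maximal inequality for these partial sums together with Borel--Cantelli across $n$ bounds both the grid values and the increments between grid points. This is where the logarithmic factor enters; the localisation provided by strong consistency keeps only break points near $u_0$ relevant and keeps the normalisation from degenerating.

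Granting $\sum_{i=1}^n D_i^2 = \go\big((\log n)^2\big)$ a.s., the passage to the pointwise minimum is a counting argument. For an open interval $I\subset\interff{\underline{u}}{\overline{u}}$, Remark 2 of Assumption (A1) together with the positivity of $f$ on $\interoo{\underline{u}}{\overline{u}}$ (Assumption (A3)) give $\card\{i\leqslant n : t_i\in I\} = n\int_I f(t)\ud t + \po(n)$, hence $\card\{i\leqslant n : t_i\in I\}\geqslant c\,n$ for some $c>0$ and all $n$ large, a.s. Therefore
\[
\min_{t_i\in I,\; i\leqslant n} D_i^2 \;\leqslant\; \frac{1}{\card\{i\leqslant n : t_i\in I\}}\sum_{t_i\in I} D_i^2 \;\leqslant\; \frac{1}{c\,n}\sum_{i=1}^n D_i^2 \;=\; \go\!\left(\frac{(\log n)^2}{n}\right),
\]
and taking square roots yields $\min_{t_i\in I,\; i\leqslant n}\left|\mu(\widehat\eta_n,t_i)-\mu(\eta_0,t_i)\right| = \go\big(n^{-\frac12}\log n\big)$, as claimed. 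Thus the first and third steps are, respectively, an immediate consequence of least-squares optimality and a density-based count, while the entire technical weight sits in the uniform self-normalised Gaussian bound of the second step.
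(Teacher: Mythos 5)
Your proposal is correct in outline and takes a genuinely different route from the paper at the one place where the real work happens. The skeleton is the same: your basic inequality $\sum_{i=1}^n D_i^2\leqslant 2\bigl|\sum_{i=1}^n D_i\xi_i\bigr|$ is exactly the paper's geometric chain (since $\mu^*-\mu_0=Q^+\xi$ and $\widehat{\mu}-\mu_0\in\mathcal{G}^+$, the paper's $\|\widehat{\mu}-\mu_0\|^2\leqslant2\langle\mu^*-\mu_0,\widehat{\mu}-\mu_0\rangle$ is literally the same statement), and your final density/counting step on $I$ is identical to the paper's. Where you diverge is in controlling the self-normalised supremum. The paper works with the single data-dependent subspace $\mathcal{G}^+$ spanned by $v_1,v_2,\zeta$, and bounds $\|Q^+\xi\|$ via Cochran's theorem, the Gaussian law of $\langle\zeta,\xi\rangle/\|\zeta\|$, an asymptotic-orthogonality estimate between $\zeta$ and $\mathcal{G}$ (which consumes Theorem \ref{theo.strongconsistency} and Theorem \ref{theo.Polya}), and Feder's Lemma \ref{lemma.controlalpha}. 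You instead bound the supremum over all breakpoint positions at once: as $u$ runs over an interval between consecutive order statistics of $t_1,\ldots,t_n$, the vectors $\left(\mu(\eta,t_i)-\mu(\eta_0,t_i)\right)_{i\leqslant n}$ stay inside one fixed subspace of dimension at most $4$, so only $n+1$ deterministic subspaces matter; on each, the self-normalised sup equals the norm of the orthogonal projection of $\xi$, whose square is at most $\sigma_0^2\chi^2(4)$-distributed, and a union bound with tails $\mathrm{e}^{-c\log^2n}$ plus Borel--Cantelli yields $\go(\log n)$. This buys something real: in the paper's argument $Q$, $Q^+$ and $\zeta$ depend on $\widehat{u}_n$ and hence on $\xi$, so claims such as $\|Q\xi\|^2\sim\sigma_0^2\chi^2(2)$ are not literally valid for these data-dependent objects and tacitly require exactly the uniformity over configurations that your union bound makes explicit; your route also needs neither Feder's lemma nor, in fact, the localisation via Theorem \ref{theo.strongconsistency}, since the supremum can be taken over the whole parameter set.

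Two details must be fixed to make your second step airtight. First, the ``maximal inequality for these partial sums'' has to be applied to self-normalised quantities: a Kolmogorov-type bound on $\max_{k\leqslant n}|S_k|$ is of order $n^{1/2}$ and useless here; what works is the per-configuration projection-norm bound (equivalently, control of $\max_{k\leqslant n}|S_k|/\sqrt{k}$) coming from Gaussian/chi-square tails, followed by the union bound over the $n+1$ configurations --- that union bound is exactly where the $\log n$ enters. Second, your stated reason for localising is not a real issue: the self-normalised ratio in any fixed direction has law $\loi{N}(0,\sigma_0^2)$ (note: $\sigma_0^2$, not $\sigma^2$) regardless of the norm of that direction, so the normalisation cannot ``degenerate''; the only degenerate case $\sum_{i=1}^nD_i^2=0$ makes the claim trivial.
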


\begin{proof}[Proof of Proposition \ref{prop.feder314}]The proof is given in Section \ref{subsec:proof3}.
\end{proof}

\begin{theorem}\label{theo.strongconsistencywithrate}Under Assumptions (A1)--(A4), we have a.s., as \(n\conv+\infty\),
\begin{align}\label{eq.theo.strongconsistencywithrate}
\|{\widehat\theta}_n-\theta_0\|  &=  \go\left(n^{-\frac{1}{2}}\log n\right).
\end{align}
\end{theorem}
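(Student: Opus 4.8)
The plan is to bootstrap the rate $\go(n^{-\frac12}\log n)$ obtained for the \emph{image} $\mu(\widehat{\eta}_n,\cdot)$ in Proposition \ref{prop.feder314} into the same rate for the \emph{parameter} $\theta=(\gamma,u,\sigma^2)$, treating the regression part $\eta=(\gamma,u)$ and the variance $\sigma^2$ separately. The essential difficulty is that Proposition \ref{prop.feder314} controls only the \emph{minimum} of $|\mu(\widehat{\eta}_n,t_i)-\mu(\eta_0,t_i)|$ over an interval, i.e. a single favourable design point, rather than a uniform modulus of continuity; I therefore cannot invert a Lipschitz estimate directly and must instead exploit the affine structure of $\mu$ on the non-degenerate phase together with two well-separated design points.

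First I would localise. By Theorem \ref{theo.strongconsistency} one has $\widehat{u}_n\conv u_0$ a.s., and since $u_0\in\interoo{\underline u}{\overline u}$ I may fix $\varepsilon>0$ and two disjoint open intervals $I_1,I_2\subset\interoo{\underline u}{u_0-\varepsilon}$; for $n$ large enough $\widehat{u}_n>u_0-\varepsilon$, so that for every $t\in I_1\cup I_2$ both break-points lie to the right of $t$ and hence $\mu(\eta_0,t)=\gamma_0(t-u_0)$ and $\mu(\widehat{\eta}_n,t)=\widehat{\gamma}_n(t-\widehat{u}_n)$ are simultaneously in their linear regime. By Remark 2 with $h\equiv1$ and Assumption (A3), each $I_j$ contains design points for $n$ large, so Proposition \ref{prop.feder314} supplies, for $j=1,2$, a point $t_{i_j}\in I_j$ with $|\mu(\widehat{\eta}_n,t_{i_j})-\mu(\eta_0,t_{i_j})|=\go(n^{-\frac12}\log n)$. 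Writing the difference as the affine map $(\widehat{\gamma}_n-\gamma_0)t-(\widehat{\gamma}_n\widehat{u}_n-\gamma_0 u_0)$ and evaluating at $t_{i_1},t_{i_2}$, subtraction gives $(\widehat{\gamma}_n-\gamma_0)(t_{i_1}-t_{i_2})=\go(n^{-\frac12}\log n)$; since $|t_{i_1}-t_{i_2}|$ is bounded below by the gap between $I_1$ and $I_2$, this yields $\widehat{\gamma}_n-\gamma_0=\go(n^{-\frac12}\log n)$, and then the intercept satisfies $\widehat{\gamma}_n\widehat{u}_n-\gamma_0 u_0=\go(n^{-\frac12}\log n)$. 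Decomposing $\widehat{\gamma}_n\widehat{u}_n-\gamma_0 u_0=\widehat{\gamma}_n(\widehat{u}_n-u_0)+u_0(\widehat{\gamma}_n-\gamma_0)$ and dividing by $\widehat{\gamma}_n\conv\gamma_0\neq0$ (Assumption (A2)) gives $\widehat{u}_n-u_0=\go(n^{-\frac12}\log n)$, which completes the rate for $\eta$.

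It then remains to control $\widehat{\sigma}_n^2=\frac1n\sum_{i=1}^n(X_i-\mu(\widehat{\eta}_n,t_i))^2$. Substituting $X_i=\mu(\eta_0,t_i)+\xi_i$ yields $\widehat{\sigma}_n^2=\frac1n\sum\xi_i^2+\frac2n\sum\xi_i\Delta_i+\frac1n\sum\Delta_i^2$ with $\Delta_i=\mu(\eta_0,t_i)-\mu(\widehat{\eta}_n,t_i)$. The rate just established for $\eta$, handled piecewise (linear regime on $\{t\le\min(u_0,\widehat{u}_n)\}$, identically zero on $\{t\ge\max(u_0,\widehat{u}_n)\}$, and the mismatch strip between the break-points, of width $|\widehat{u}_n-u_0|$), gives the uniform bound $\sup_{t\in\interff{\underline u}{\overline u}}|\mu(\widehat{\eta}_n,t)-\mu(\eta_0,t)|=\go(n^{-\frac12}\log n)$, so the last two sums are $\go(n^{-1}(\log n)^2)$ and, using $\frac1n\sum|\xi_i|\conv\esp|\xi|$ a.s., $\go(n^{-\frac12}\log n)$. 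For the first sum the law of the iterated logarithm gives $\frac1n\sum\xi_i^2-\sigma_0^2=\go(n^{-\frac12}\sqrt{\log\log n})=\go(n^{-\frac12}\log n)$ a.s.; hence $\widehat{\sigma}_n^2-\sigma_0^2=\go(n^{-\frac12}\log n)$, and combining the three components yields $\|\widehat{\theta}_n-\theta_0\|=\go(n^{-\frac12}\log n)$. The step requiring the most care is the simultaneous production of two separated design points both attaining the rate (relying on consistency and Assumption (A3)), since this is what replaces the unavailable uniform modulus of continuity for $\mu$; the treatment of the break-point mismatch strip in the uniform bound for $\Delta_i$ is the other place where the non-smoothness of the model must be dealt with explicitly.
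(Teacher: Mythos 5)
Your proof is correct and follows essentially the same route as the paper's: both arguments invert the affine system obtained by evaluating \(\mu(\widehat{\eta}_n,\cdot)-\mu(\eta_0,\cdot)\) at two design points supplied by Proposition \ref{prop.feder314} in two well-separated intervals of the non-degenerate (linear) regime — the paper via the matrix \(M(t_1,t_2)\) and a norm bound, you by direct subtraction — then pass from slope and intercept to \(u\) by dividing by \(\widehat{\gamma}_n\conv\gamma_0\neq 0\), and handle \(\sigma^2\) through the same decomposition \eqref{eq.sigma2ai} with a uniform bound on \(\mu(\widehat{\eta}_n,\cdot)-\mu(\eta_0,\cdot)\) and the law of the iterated logarithm. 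The minor deviations (your explicit localisation of \(I_1,I_2\) below \(u_0-\varepsilon\) to guarantee both regression functions are simultaneously affine there, and your cruder but sufficient bound of the cross term via \(\frac{1}{n}\sum_{i=1}^n|\xi_i|\) instead of the paper's Gaussian \(\go(\log n)\) bound from Lemma \ref{lemma.suiteiidOlogn}) do not change the substance; just state that \(I_1\) and \(I_2\) have disjoint \emph{closures} (as the paper does for \(V_1,V_2\)), since disjointness alone does not bound \(|t_{i_1}-t_{i_2}|\) away from zero.
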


\begin{proof}[Proof of Theorem \ref{theo.strongconsistencywithrate}]
We show that a.s. \eqref{eq.theo.strongconsistencywithrate} holds for each coordinate of \({\widehat\theta}_n-\theta_0\). The calculations for the variance \(\sigma^2\) are pushed back into Section \ref{subsec:proof3}. We now prove the result for the parameters \(\gamma\) and \(u\). It is more convenient to use a reparametrisation of the model in terms of slope \(\gamma\) and intercept \(\beta\) where \(\beta = -\gamma u\).

\vspace{0.25em}\par\noindent\textsc{Slope \(\gamma\) and intercept \(\beta\).~}
Let \(V_1\) and \(V_2\) be two non empty open intervals of \(\interoo{\underline{u}}{u_0}\) such that their closures \(\overline{V_1}\) and \(\overline{V_2}\) do not overlap. For any \((t_1, t_2)\in V_1 \times V_2\), define \(M(t_1, t_2)\) the obviously invertible matrix
\begin{align*}
M(t_1, t_2) = \left[\begin{array}{cc}1 & t_1 \\ 1 & t_2\end{array}\right],
\end{align*}
and observe that for any \(\tau = (\beta, \gamma)\),
\begin{align*}
M(t_1, t_2) \tau = \left[\begin{array}{c}\mu(\eta, t_1) \\ \mu(\eta, t_2)\end{array}\right].
\end{align*}
Observe that by some basic linear algebra tricks we are able to write for any \((t_1, t_2)\in V_1 \times V_2\)
\begin{align*}
\|\widehat{\tau}_n - \tau_0\|_\infty &= \|M(t_1, t_2)^{-1} M(t_1, t_2)(\widehat{\tau}_n - \tau_0)\|_\infty \\
&\leqslant \||M(t_1, t_2)^{-1}\||_\infty \cdot \| M(t_1, t_2) \widehat{\tau}_n - M(t_1, t_2) \tau_0\|_\infty \\
&\leqslant \frac{|t_2|+|t_1|+2}{|t_2-t_1|} \cdot \| M(t_1, t_2) \widehat{\tau}_n - M(t_1, t_2) \tau_0 \|_\infty.
\end{align*}
Thus, using the equivalence of norms and a simple domination of the first term of the product in the inequality above, we find that there exists a constant \(C\in\R_+^*\), such that for any \((t_1, t_2)\in V_1 \times V_2\)
\begin{align}
\|\widehat{\tau}_n - \tau_0\| &\leqslant C \cdot \| M(t_1, t_2) \widehat{\tau}_n - M(t_1, t_2) \tau_0 \|, \nonumber
\intertext{i.e.}
\|\widehat{\tau}_n - \tau_0\| &\leqslant C \cdot \left[\sum_{i=1}^2 (\mu(\widehat{\eta}_n, t_i) - \mu(\eta_0, t_i))^2\right]^{\frac{1}{2}}. \label{eq.dominetaneta0}
\end{align}
Taking advantage of Proposition \ref{prop.feder314}, we are able to exhibit two sequences of points \((t_{1,n})_{n\in\N}\) in \(V_1\) and \((t_{2,n})_{n\in\N}\) in \(V_2\) such that a.s., for \(i=1,2\)
\begin{align}
\left|\mu(\widehat{\eta}_n, t_{i,n}) - \mu(\eta_0, t_{i,n})\right| = \go\left(n^{-\frac{1}{2}}\log n\right). \label{eq.consequencefeder314}
\end{align}
Combining \eqref{eq.dominetaneta0} and \eqref{eq.consequencefeder314} together (using \(t_i = t_{i,n}\) for every \(n\)), it is now trivial to see that a.s.
\begin{align*}
\|{\widehat\tau}_n-\tau_0\|  &=  \go\left(n^{-\frac{1}{2}}\log n\right),
\end{align*}
which immediately implies the result for the \(\gamma\) and \(\beta\) components of \(\theta\).

\vspace{0.25em}\par\noindent\textsc{Break-point \(u\).~}
Recalling that \(u=-\beta\gamma^{-1}\) and thanks to the result we just proved, we find that a.s.
\begin{align*}
\widehat{u}_n &= -\widehat{\beta}_n\widehat{\gamma}_n^{-1}
= -\left[\beta_0 + \go\left(n^{-\frac{1}{2}}\log n\right)\right] \left[\gamma_0 + \go\left(n^{-\frac{1}{2}}\log n\right)\right]^{-1} \\
&= -\beta_0 \gamma_0^{-1} + \go\left(n^{-\frac{1}{2}}\log n\right)
= u_0 + \go\left(n^{-\frac{1}{2}}\log n\right).
\end{align*}
\end{proof}

\section{Asymptotic distribution of the MLE}\label{sec:asympdistMLE}
In this Section we derive the asymptotic distribution of the MLE for the pseudo-problem (see Proposition \ref{prop.asympdistpseudo}) and then show that the MLE of pseudo-problem and that of the problem share the same asymptotic distribution (see Theorem \ref{theo.asympdist}).
\begin{proposition}\label{prop.asympdistpseudo}Under Assumptions (A1)--(A4) and conditions \eqref{eq.definitiondn}, as \(n\conv+\infty\)
\begin{align*}
n^{\frac{1}{2}}\left({\widehat\theta}_n^* - \theta_0\right) \convloi \loi{N}\left(0, I(\theta_0)^{-1}\right),
\end{align*}
where the asymptotic Fisher Information Matrix \(I(\cdot)\) is defined in \eqref{eq.lemmaasympFishInfMatrix}.
\end{proposition}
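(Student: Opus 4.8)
The plan is to follow the classical route to asymptotic normality of an M-estimator: write the first-order optimality condition $\nabla l_{1:n}^*(X_{1:n}|\widehat{\theta}_n^*) = 0$, perform a coordinatewise mean-value expansion of the score about \(\theta_0\), and invert the (normalised) Hessian. The whole argument is legitimate only if the pseudo-problem likelihood is, eventually, twice continuously differentiable on a neighbourhood of \(\widehat{\theta}_n^*\) containing \(\theta_0\). The key structural observation is that, having deleted the \(n^{**}=\po(n)\) observations whose temperatures fall in \(D_n\), no remaining \(t_i\) lies in \(D_n\); hence for \(u\in D_n\) the contributing index set \(\{i\leqslant n^*:\,t_i\leqslant u\}\) is frozen (equal to \(\{i:\,t_i\leqslant u_0-\frac{d_n}{2}\}\) among the remaining observations) and the only dependence of \(l_{1:n}^*\) on \(u\) is through the smooth term \(\gamma\cdot(t_i-u)\). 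Consequently \(l_{1:n}^*(X_{1:n}|\cdot)\) is \(C^\infty\) on the open region \(\{u\in D_n\}\cap\Theta\).

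The decisive preliminary step is the localisation \(\widehat{u}_n^*\in D_n\). I would first obtain strong consistency of \(\widehat{\theta}_n^*\) with rate \(\go(n^{-\frac{1}{2}}\log n)\) by rerunning the arguments of Section \ref{sec:strongconsMLE} for the pseudo-problem; they transfer because \(n^*/n\to 1\) and only a vanishing fraction of the information is removed. Since \(|\widehat{u}_n^*-u_0|=\go(n^{-\frac{1}{2}}\log n)\) while \(D_n\) has half-width \(\frac{d_n}{2}\), the second condition in \eqref{eq.definitiondn}, namely \(n^{-\frac{1}{2}}(\log n)d_n^{-1}\to 0\), yields \(|\widehat{u}_n^*-u_0|=\po(d_n)\), so a.s.\ \(\widehat{u}_n^*\in D_n\) for all \(n\) large enough. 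Because \(u_0\) is the centre of the interval \(D_n\), the whole segment joining \(\theta_0\) to \(\widehat{\theta}_n^*\) stays in the differentiability region, which is exactly what the expansion requires.

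With differentiability secured, the expansion gives \(0=\nabla l_{1:n}^*(\theta_0)+\nabla^2 l_{1:n}^*(\bar{\theta}_n)(\widehat{\theta}_n^*-\theta_0)\) for intermediate points \(\bar{\theta}_n\) on that segment (one per coordinate), each converging to \(\theta_0\), whence
\[
n^{\frac{1}{2}}(\widehat{\theta}_n^*-\theta_0)=\left[-\tfrac{1}{n}\nabla^2 l_{1:n}^*(\bar{\theta}_n)\right]^{-1} n^{-\frac{1}{2}}\nabla l_{1:n}^*(X_{1:n}|\theta_0).
\]
I would then treat the two factors separately. The normalised score \(n^{-\frac{1}{2}}\nabla l_{1:n}^*(\theta_0)\) is a sum of independent, mean-zero vectors (each a linear or quadratic function of the Gaussian noise \(\xi_i\)); by Remark 2 its covariance \(\frac{1}{n}\var(\nabla l_{1:n}^*(\theta_0))\) converges to \(I(\theta_0)\) of \eqref{eq.lemmaasympFishInfMatrix}, the deleted terms contributing only \(\po(1)\) because \(n^{**}/n\to 0\) and the \(t_i\) lie in the compact \(\interff{\underline{u}}{\overline{u}}\). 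A Lindeberg--Feller (or Lyapunov) central limit theorem, whose moment bounds follow from this same compactness, then gives \(n^{-\frac{1}{2}}\nabla l_{1:n}^*(\theta_0)\convloi\loi{N}(0,I(\theta_0))\). For the Hessian, a uniform law of large numbers combined with \(\bar{\theta}_n\to\theta_0\) and the continuity of the limit matrix (established in Lemma \ref{lemma.Bn}) yields \(-\frac{1}{n}\nabla^2 l_{1:n}^*(\bar{\theta}_n)\convps I(\theta_0)\). Slutsky's theorem finally delivers \(I(\theta_0)^{-1}\loi{N}(0,I(\theta_0))=\loi{N}(0,I(\theta_0)^{-1})\), as claimed.

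The main obstacle is the preliminary localisation \(\widehat{u}_n^*\in D_n\): everything downstream is standard, but the Taylor expansion is vacuous unless the MLE sits in the region where the likelihood is smooth. This is precisely the raison d'être of the pseudo-problem and of the calibration \eqref{eq.definitiondn} — the consistency rate \(n^{-\frac{1}{2}}\log n\) must beat the shrinking of \(D_n\) at rate \(d_n\). A secondary technical burden is checking the Lindeberg condition and the uniform convergence of the Hessian in the non-identically-distributed setting, but both are controlled by the boundedness of the \(t_i\) and by Remark 2.
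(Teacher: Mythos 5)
Your proposal is correct and follows essentially the same route as the paper: the same localisation step (strong consistency at rate \(n^{-\frac{1}{2}}\log n\) transferred to the pseudo-problem, combined with the second condition of \eqref{eq.definitiondn} to place \(\widehat{u}_n^*\) in \(D_n\), where the deleted observations make \(l_{1:n}^*\) smooth), followed by the same mean-value expansion of the score at \(\theta_0\), a Lyapunov/Lindeberg CLT for the normalised score (the paper's Lemma \ref{lemma.An}), a.s.\ convergence of the normalised Hessian at intermediate points (the paper's Lemma \ref{lemma.Bn}), and Slutsky. Your explicitly coordinatewise mean-value expansion is if anything slightly more careful than the paper's single intermediate point \(\widetilde{\theta}_n\).
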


\begin{proof}[Proof of Theorem \ref{prop.asympdistpseudo}]
The proof is divided in two steps. We first show that the likelihood of the pseudo-problem is a.s. differentiable in a neighbourhood of the MLE \({\widehat\theta}_n^*\) for \(N\) large enough. We then recover the asymptotic distribution of the MLE following the usual scheme of proof, with a Taylor expansion of the likelihood of the pseudo-problem around the true parameter. The details of these two steps are given in Section \ref{subsec:proof4}.
\end{proof}

\begin{theorem}\label{theo.asympdist}
Under Assumptions (A1)--(A4) and conditions \eqref{eq.definitiondn}, as \(n\conv+\infty\),
\begin{align*}
n^{\frac{1}{2}}\left({\widehat\theta}_n - \theta_0\right) \convloi \loi{N}\left(0, I(\theta_0)^{-1}\right),
\end{align*}
where the asymptotic Fisher Information Matrix \(I(\cdot)\) is defined in \eqref{eq.lemmaasympFishInfMatrix}.
\end{theorem}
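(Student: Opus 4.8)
The plan is to transfer the asymptotic distribution of the pseudo-problem MLE (Proposition~\ref{prop.asympdistpseudo}) to the full MLE by a Slutsky-type argument, the entire difficulty being concentrated in a single proximity estimate. Writing
\begin{align*}
n^{\frac{1}{2}}\left(\widehat{\theta}_n - \theta_0\right) &= n^{\frac{1}{2}}\left(\widehat{\theta}_n^* - \theta_0\right) - \alpha_n, & \alpha_n &= n^{\frac{1}{2}}\left(\widehat{\theta}_n^* - \widehat{\theta}_n\right),
\end{align*}
Proposition~\ref{prop.asympdistpseudo} already gives \(n^{\frac{1}{2}}(\widehat{\theta}_n^* - \theta_0)\convloi\loi{N}(0,I(\theta_0)^{-1})\). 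Hence, by Slutsky's lemma, it suffices to establish the proximity result announced in Section~\ref{subsec.pseudopb}, namely
\begin{align}
\alpha_n = n^{\frac{1}{2}}\left(\widehat{\theta}_n^* - \widehat{\theta}_n\right) &\convproba 0, \label{eq.proximityproba}
\end{align}
and proving \eqref{eq.proximityproba} is the whole content of the argument.

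To prove \eqref{eq.proximityproba} I would compare the two estimators through the \emph{pseudo}-likelihood \(l_{1:n}^*\), which is the right object here because it is smooth near both of them. Indeed, Theorem~\ref{theo.strongconsistencywithrate} gives \(\widehat{u}_n - u_0 = \go(n^{-\frac{1}{2}}\log n)\) a.s., while the consistency of the pseudo-problem MLE yields \(\widehat{u}_n^* - u_0 = \gop(n^{-\frac{1}{2}})\); in either case the second condition in \eqref{eq.definitiondn}, namely \(n^{-\frac{1}{2}}(\log n)d_n^{-1}\conv0\), forces \(\widehat{u}_n\) and \(\widehat{u}_n^*\) to fall inside \(D_n\) for \(n\) large (a.s., resp. with probability tending to one). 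Since every temperature retained in the pseudo-problem lies outside \(D_n\), the map \(\theta\mapsto l_{1:n}^*(X_{1:n}|\theta)\) has no kink at either point and \(\nabla l_{1:n}^*(X_{1:n}|\widehat{\theta}_n^*)=0\). A first-order Taylor expansion of \(\nabla l_{1:n}^*\) between the two estimators then gives, for some \(\bar{\theta}_n\) on the segment joining them,
\begin{align*}
n^{\frac{1}{2}}\left(\widehat{\theta}_n^* - \widehat{\theta}_n\right) &= -\left[\frac{1}{n}\nabla^2 l_{1:n}^*(X_{1:n}|\bar{\theta}_n)\right]^{-1} n^{-\frac{1}{2}}\,\nabla l_{1:n}^*(X_{1:n}|\widehat{\theta}_n).
\end{align*}
The bracketed matrix converges a.s. to \(-I(\theta_0)\) (Lemma~\ref{lemma.Bn} and the proof of Proposition~\ref{prop.asympdistpseudo}), so it remains to show \(n^{-\frac{1}{2}}\nabla l_{1:n}^*(X_{1:n}|\widehat{\theta}_n)\convproba0\).

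The key point is that \(\nabla l_{1:n}^*(X_{1:n}|\widehat{\theta}_n)\) differs from \(\nabla l_{1:n}(X_{1:n}|\widehat{\theta}_n)\) only by the gradient contributions of the \(n^{**}\) deleted observations, so that \(\nabla l_{1:n}^*(X_{1:n}|\widehat{\theta}_n) = -\sum_{i\,:\,t_i\in D_n}\nabla l_i(X_i|\widehat{\theta}_n)\) (up to a subgradient subtlety at \(\widehat{\theta}_n\), handled via one-sided derivatives since \(\widehat{\theta}_n\) maximises the full likelihood). Each term carries a residual \(X_i - \mu(\widehat{\eta}_n,t_i) = \xi_i + [\mu(\eta_0,t_i)-\mu(\widehat{\eta}_n,t_i)]\). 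The stochastic part is controlled by its variance, of order \(n^{-1}\cdot n^{**} = \go(d_n)\conv0\) by \eqref{eq.definitiondn}. For the deterministic part one must show that, for \(t_i\in D_n\), the bias \(\mu(\eta_0,t_i)-\mu(\widehat{\eta}_n,t_i)\) is \(\go(n^{-\frac{1}{2}}\log n)\) — crucially sharper than the naive \(\go(d_n)\) — by distinguishing whether \(t_i\) lies on the same side of \(u_0\) and \(\widehat{u}_n\) (use \(\widehat{\eta}_n-\eta_0=\go(n^{-\frac{1}{2}}\log n)\)) or strictly between them (in which case \(|t_i-u_0|\leqslant|\widehat{u}_n-u_0|=\go(n^{-\frac{1}{2}}\log n)\)). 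Summing these \(n^{**}=\go(nd_n)\) terms and rescaling by \(n^{-\frac{1}{2}}\) is where the interplay between the two conditions \eqref{eq.definitiondn} and the rate of Theorem~\ref{theo.strongconsistencywithrate} must be exploited.

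\textbf{Main obstacle.} The hard part is entirely \eqref{eq.proximityproba}, and within it the sharp control of the deterministic bias of the deleted observations: the crude bound \(|\mu(\eta_0,t_i)-\mu(\widehat{\eta}_n,t_i)|=\go(d_n)\) is too weak, since after summation and rescaling it need not vanish for every admissible \(d_n\). One must therefore genuinely use the \(\go(n^{-\frac{1}{2}}\log n)\) rate of the MLE together with the geometry of the kink at the breakpoint — and, plausibly, cancellations in the sum over \(D_n\) — to force \(n^{-\frac{1}{2}}\nabla l_{1:n}^*(X_{1:n}|\widehat{\theta}_n)\) to zero in probability. The non-differentiability of the full likelihood — the very reason the pseudo-problem was introduced — resurfaces here through the need to justify that \(\widehat{\theta}_n\) behaves like a stationary point of \(l_{1:n}^*\) even though it may sit at a kink of \(l_{1:n}\).
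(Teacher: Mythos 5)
Your first step coincides exactly with the paper's: reduce the theorem, via Slutsky's lemma and Proposition~\ref{prop.asympdistpseudo}, to the proximity estimate \eqref{eq.proximityproba}, and your observation that both \(\widehat{u}_n\) and \(\widehat{u}_n^*\) eventually lie in \(D_n\) (so that \(l_{1:n}^*\) is smooth on the segment joining the two estimators) is sound. The divergence is in how \eqref{eq.proximityproba} is proved. The paper does it coordinate-wise: for \(\gamma\) and \(u\) it invokes Lemmas 4.12 and 4.16 of \cite{Feder} (with \((\log\log n)^{1/2}\) formally replaced by \(\log n\)), and only \(\sigma^2\) is handled directly, via the decomposition \eqref{eq.sigma2ai}. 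You instead attempt a self-contained score-comparison argument, reducing everything to \(n^{-\frac{1}{2}}\nabla l_{1:n}^*(X_{1:n}|\widehat{\theta}_n)\convproba 0\). That reduction is fine; the problem is that you do not prove this last convergence, and the bounds you propose cannot prove it.

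Concretely: each deleted observation contributes a deterministic bias \(\mu(\eta_0,t_i)-\mu(\widehat{\eta}_n,t_i)=\go(n^{-\frac{1}{2}}\log n)\) (your sharpened bound is correct), but there are \(n^{**}\sim n d_n f(u_0)\) deleted observations, so after summation and rescaling by \(n^{-\frac{1}{2}}\) you are left with \(\go(d_n\log n)\) --- and the paper explicitly allows \(d_n=\log^{-1}n\), for which \(d_n\log n\not\conv 0\) (for \(d_n=\log^{-\frac{1}{2}}n\), also admissible under \eqref{eq.definitiondn}, the bound even diverges). The cancellation you hope for does not exist inside this sum: the dominant contribution to the \(u\)-component of the deleted score is \(n^{-\frac{1}{2}}\sum \widehat{\gamma}_n^2(\widehat{u}_n-u_0)/\widehat{\sigma}_n^2\), taken over the \(t_i\in D_n\) with \(t_i\leqslant\min(u_0,\widehat{u}_n)\), whose summands are all identical; its exact order is \(n^{\frac{1}{2}}d_n|\widehat{u}_n-u_0|\). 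Making this vanish requires \(\widehat{u}_n-u_0=\gop(n^{-\frac{1}{2}})\), which is precisely (a consequence of) the theorem being proved, so your argument as written is circular. It can be rescued by a fixed-point step you did not supply: your Taylor identity gives \(\|\widehat{\theta}_n-\widehat{\theta}_n^*\|\leqslant C d_n\|\widehat{\theta}_n-\theta_0\|+\gop(n^{-\frac{1}{2}}d_n^{\frac{1}{2}})\); combining with \(\|\widehat{\theta}_n^*-\theta_0\|=\gop(n^{-\frac{1}{2}})\) and the triangle inequality, and absorbing the term \(Cd_n\|\widehat{\theta}_n-\theta_0\|\) into the left-hand side (possible since \(d_n\conv 0\)), one first gets \(\|\widehat{\theta}_n-\theta_0\|=\gop(n^{-\frac{1}{2}})\), and re-substituting then yields \eqref{eq.proximityproba}. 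Without that closing step (or an appeal to Feder's lemmas, as in the paper), the proposal does not prove the theorem.
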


\begin{proof}[Proof of Theorem \ref{theo.asympdist}]
It is a direct consequence of Proposition \ref{prop.asympdistpseudo} as soon as we show that as \(n\conv+\infty\)
\begin{align}
{\widehat\theta}_n - {\widehat\theta}_n^* = \pop\left(n^{-\frac{1}{2}}\right). \label{eq.proximityproba}
\end{align}
To prove \eqref{eq.proximityproba}, we study each coordinate separately. For \(\gamma\) and \(u\), we apply Lemmas 4.12 and 4.16 found in \cite{Feder} with a slight modification: the rate of convergence \(d_n\) he uses may differ from ours but it suffices to formally replace \((\log\log n)^{\frac{1}{2}}\) by \((\log n)\) all throughout his paper and the proofs he provides go through without any other change. We thus get
\begin{align}
{\widehat\gamma}_n - {\widehat\gamma}_n^* &= \pop\left(n^{-\frac{1}{2}}\right), 
&{\widehat u}_n - {\widehat u}_n^* &= \pop\left(n^{-\frac{1}{2}}\right).  \label{eq.asymppseudogammau}
\end{align}
It now remains to show that
\begin{align}
\widehat{\sigma}_n^2 - \widehat{\sigma}_n^{2*} &= \pop\left(n^{-\frac{1}{2}}\right). \label{eq.asymppseudosigma2}
\end{align}
To do so, we use \eqref{eq.asymppseudogammau} and the decomposition \eqref{eq.sigma2ai}
\begin{align*}
\widehat{\sigma}_n^2 &= \frac{1}{n} \sum_{i=1}^n \nu_i^2(\widehat{\eta}_n) + \frac{2}{n} \sum_{i=1}^n \nu_i(\widehat{\eta}_n) \xi_i + \frac{1}{n} \sum_{i=1}^n \xi_i^2,
\end{align*}
where \(\nu_i(\widehat{\eta}_n) = \gamma_0 \cdot (t_i - u_0) \one_{\interfo{t_i}{+\infty}}(u_0) - \widehat{\gamma}_n \cdot (t_i - \widehat{u}_n) \one_{\interfo{t_i}{+\infty}}(\widehat{u}_n)\). The details of this are available in Section \ref{subsec:proof4}.
\end{proof}

\section{Discussion}\label{sec:discussion}
In this Section, we summarise the results presented in this paper. The consistency of the posterior distribution for a piecewise linear regression model is derived as well as its asymptotic normality with suitable normalisation. The proofs of these convergence results rely on the convergence of the MLE which is also proved here. In order to obtain all the asymptotic results, a regularised version of the problem at hand, called pseudo-problem, is first studied and the difference between this pseudo-problem and the (full) problem is then shown to be asymptotically negligeable.

The trick of deleting observations in a diminishing neighbourhood of the true parameter, originally found in \cite{Sylwester} allows the likelihood of the pseudo-problem to be differentiated at the MLE, once the MLE is shown to asymptotically belong to that neighbourhood (this requires at least a small control of the rate of convergence of the MLE). This is the key argument needed to derive the asymptotic distribution of the MLE through the usual Taylor expansion of the likelihood at the MLE. Extending the results of \cite{Ghosh} to a non i.i.d. setup, the asymptotic normality of the posterior distribution for the pseudo-problem is then recovered from that of the MLE, and passes on almost naturally to the (full) problem.

The asymptotic normality of the MLE and the posterior distribution are proved in this paper in a non i.i.d. setup with a non continuously differentiable likelihood. In both cases we obtain the same asymptotic results as for an i.i.d. regular model: the rate of convergence is \(\sqrt{n}\) and the limiting distribution is Gaussian \cite[see][]{Ghosh,Lehmann}. For the piecewise linear regression model, the exogenous variable \(t_{1:n}\) does not appear in the expression of the rate of convergence as opposed to what is known for the usual linear regression model \cite[see][]{Lehmann}: this is due to our own Assumption (A1) which implies that \(t_{1:n}^\prime t_{1:n}\) is equivalent to \(n\). Note that for a simple linear regression model, we also obtain the rate \(\sqrt{n}\) under Assumption (A1). In the litterature, several papers already highlighted the fact that the rate of convergence and the limiting distribution (when it exists) may be different for non regular models in the sense that the likelihood is either non continuous, or non continuously differentiable, or admits singularities \cite[see][]{Dacunha,GhoshGhosalSamanta,GhosalSamanta,Ibragimov}. For the piecewise regression model, the likelihood is continuous but non continuously differentiable on a countable set (but the left and right derivatives exist and are finite): the rate of convergence \(\sqrt{n}\) is not so surprising in our case, because this rate was already obtained for a univariate i.i.d. model the likelihood of has the same non regularity at a single point. In that case, the rate of convergence of the MLE is shown to be \(n\) \cite[see][for instance]{Dacunha}.

\section{Extensive proofs}\label{sec:proofmainresults}
\subsection{Proofs of Section \ref{sec:bayescons}}\label{subsec:proof2}
\begin{proof}[\underline{Proof of Theorem \ref{theo.strongconsistencyposterior}}]
To prove \eqref{eq.fractiontrongconsistencyposterior}, we proceed as announced and deal with numerator and denominator in turn.
\vspace{0.25em}\par\noindent\textsc{Majoration.~} From Proposition \ref{prop.majorationdiffdeL} with \(\rho_n = 1\), for any given \(\epsilon > 0\), we can choose \(\delta>0\) small enough so that a.s. for any \(n\) large enough
\begin{align*}
\sup_{\theta \in B^c(\theta_0, \delta)} \frac{1}{n}[l_{1:n}(X_{1:n}|\theta)-l_{1:n}(X_{1:n}|\theta_0)] &\leqslant -\epsilon.
\end{align*}
We thus obtain a.s. for any \(n\) large enough
\begin{align}
0&\leqslant \int_{B^c(\theta_0, \delta)} \pi(\theta) \exp[l_{1:n}(X_{1:n}|\theta)-l_{1:n}(X_{1:n}|\theta_0)] \ud \theta \nonumber \\
&\leqslant e^{-n\epsilon} \int_{B^c(\theta_0, \delta)} \pi(\theta) \ud \theta. \label{eq.majorationnumstrongconsistencyposterior}
\end{align}

\vspace{0.25em}\par\noindent\textsc{Minoration.~} Define \(\theta_n \in \overline{B(\theta_0, \delta)} \) such that
\begin{align*}
\inf_{\theta \in B(\theta_0, \delta)} \frac{1}{n}[l_{1:n}(X_{1:n}|\theta)-l_{1:n}(X_{1:n}|\theta_0)] &= \frac{1}{n}[l_{1:n}(X_{1:n}|\theta_n)-l_{1:n}(X_{1:n}|\theta_0)]
\end{align*}
It is possible to define such a \(\theta_n\) because \(\overline{B(\theta_0, \delta)}\) is a compact subset of \(\Theta\) for \(\delta>0\) small enough and \(l_{1:n}(X_{1:n}|\cdot)\) is continuous as a function of \(\theta\).
Let now
\begin{align}
b_n(\theta) &= \left(\frac{\sigma_0^2}{\sigma^2}-1-\log\frac{\sigma_0^2}{\sigma^2}\right) + \frac{1}{\sigma^2}\cdot\frac{1}{n}\sum_{i=1}^{n}\left[\mu(\eta_0, t_i) - \mu(\eta, t_i)\right]^2.\label{eq.definitionsylwesterbn}
\end{align}
Recalling the definition of the log-likehood given in \eqref{eq.defloglikelihood1n} and replacing \(X_i\) by its expression given in \eqref{eq.modelesylwester} we find via straightforward algebra
\begin{align}
&~\dfrac{2}{n}[l_{1:n}(X_{1:n}|\theta) - l_{1:n}(X_{1:n}|\theta_0)] = \log\frac{\sigma_0^2}{\sigma^2} + \left( \frac{1}{\sigma^2}-\frac{1}{\sigma_0^2}\right)\left(\frac{1}{n}\sum_{i=1}^n \xi_i^2\right) \nonumber \\
&\qquad -\frac{1}{n\sigma^2} \sum_{i=1}^{n}[\mu(\eta_0, t_i) - \mu(\eta, t_i)]^2  -\frac{2}{\sigma^2}\frac{1}{n}\sum_{i=1}^n [\mu(\eta_0, t_i) - \mu(\eta, t_i)] \xi_i \nonumber\\
&= \log\frac{\sigma_0^2}{\sigma^2} + \left( \frac{1}{\sigma^2}-\frac{1}{\sigma_0^2}\right)\left(\frac{1}{n}\sum_{i=1}^n \xi_i^2 - \sigma_0^2 + \sigma_0^2\right)\nonumber \\
&\qquad -\frac{1}{n\sigma^2} \sum_{i=1}^{n}[\mu(\eta_0, t_i) - \mu(\eta, t_i)]^2  -\frac{2}{\sigma^2}\frac{1}{n}\sum_{i=1}^n [\mu(\eta_0, t_i) - \mu(\eta, t_i)] \xi_i \nonumber \\
&= \left(\log\frac{\sigma_0^2}{\sigma^2} + 1 - \frac{\sigma_0^2}{\sigma^2}\right) + \frac{\sigma_0^2-\sigma^2}{\sigma^2\sigma_0^2}\left(\frac{1}{n}\sum_{i=1}^n \xi_i^2 - \sigma_0^2\right)\nonumber \\
&\qquad  -\frac{1}{n\sigma^2} \sum_{i=1}^{n}[\mu(\eta_0, t_i) - \mu(\eta, t_i)]^2 -\frac{2}{\sigma^2}\frac{1}{n}\sum_{i=1}^n [\mu(\eta_0, t_i) - \mu(\eta, t_i)] \xi_i \label{eq.diffdelog1} \\
&= -b_n(\theta) + \frac{\sigma_0^2-\sigma^2}{\sigma^2\sigma_0^2}\left(\frac{1}{n}\sum_{i=1}^n \xi_i^2 - \sigma_0^2\right) - \frac{2}{\sigma^2}\frac{1}{n}\sum_{i=1}^n [\mu(\eta_0, t_i) - \mu(\eta, t_i)] \xi_i. \label{eq.diffdelog2}
\end{align}
It is now easy to see that
\begin{align*}
&~\inf_{\theta \in B(\theta_0, \delta)} \frac{2}{n}[l_{1:n}(X_{1:n}|\theta)-l_{1:n}(X_{1:n}|\theta_0)] = \frac{2}{n}[l_{1:n}(X_{1:n}|\theta_n)-l_{1:n}(X_{1:n}|\theta_0)] \\
&= -b_n(\theta_n) + \frac{\sigma_0^2-\sigma_n^2}{\sigma_n^2\sigma_0^2}\left(\frac{1}{n}\sum_{i=1}^n \xi_i^2 - \sigma_0^2\right) - \frac{2}{\sigma_n^2}\frac{1}{n}\sum_{i=1}^n [\mu(\eta_0, t_i) - \mu(\eta_n, t_i)] \xi_i \\
&= -b_n(\theta_n) + \frac{1}{\sigma_n^2} \left[\frac{\sigma_0^2-\sigma_n^2}{\sigma_0^2} \left(\frac{1}{n}\sum_{i=1}^n \xi_i^2 - \sigma_0^2\right) - \frac{2}{n}\sum_{i=1}^n [\mu(\eta_0, t_i) - \mu(\eta_n, t_i)] \xi_i\right] \\
&= -b_n(\theta_n) + \frac{1}{\sigma_n^2} R_n \\
&= \left(\log\frac{\sigma_0^2}{\sigma_n^2} + 1 - \frac{\sigma_0^2}{\sigma_n^2}\right) - \frac{1}{\sigma_n^2}\cdot\frac{1}{n}\sum_{i=1}^{n}\left[\mu(\eta_0, t_i) - \mu(\eta_n, t_i)\right]^2  + \frac{1}{\sigma_n^2} R_n
\end{align*}
where \(R_n\convps 0\) because of the Law of Large Numbers and Lemma \ref{lemma.sylwestertheo35}. Thanks to Lemma \ref{lemma.convergenceuniformemu} we thus find that there exists \(C\in\R_+^*\) such that
\begin{align*}
\inf_{\theta \in B(\theta_0, \delta)} \frac{2}{n}[l_{1:n}(X_{1:n}|\theta)-l_{1:n}(X_{1:n}|\theta_0)]
&\geqslant \left(\log\frac{\sigma_0^2}{\sigma_n^2} + 1 - \frac{\sigma_0^2}{\sigma_n^2}\right) \\
&\qquad - \frac{1}{\sigma_n^2}\left(C\|\theta_n-\theta_0\|^2-R_n\right)
\end{align*}
We now choose \(\kappa>0\) and \(\delta>0\) small enough so that
\begin{align}
\sigma_n^2 \left(\log\frac{\sigma_0^2}{\sigma_n^2} + 1 - \frac{\sigma_0^2}{\sigma_n^2}\right) &\geqslant -\kappa, \label{eq.kappadelta1} \\
-\frac{3(\kappa + C \delta^2)}{2(\sigma_0^2-\delta)} \geqslant -\frac{1}{2}\epsilon. \label{eq.kappadelta2}
\end{align}
Thanks to \eqref{eq.kappadelta1} and the definition of \(\theta_n\), we can now write that
\begin{align*}
\inf_{\theta \in B(\theta_0, \delta)} \frac{2}{n}[l_{1:n}(X_{1:n}|\theta)-l_{1:n}(X_{1:n}|\theta_0)]
&\geqslant - \frac{1}{\sigma_n^2}\left(\kappa + C\|\theta_n-\theta_0\|^2-R_n\right)\\
&\geqslant - \frac{1}{\sigma_n^2}\left(\kappa + C \delta^2 - R_n\right).
\end{align*}
Since for any \(n\) large enough
\begin{align*}
|R_n| \leqslant \frac{1}{2}\left(\kappa + C \delta^2\right),
\end{align*}
we find via \eqref{eq.kappadelta2} that for any \(n\) large enough
\begin{align*}
\inf_{\theta \in B(\theta_0, \delta)} \frac{2}{n}[l_{1:n}(X_{1:n}|\theta)-l_{1:n}(X_{1:n}|\theta_0)]
&\geqslant - \frac{3}{2\sigma_n^2}\left(\kappa + C \delta^2\right) \\
&\geqslant - \frac{3(\kappa + C \delta^2)}{2(\sigma_0^2-\delta)}
\geqslant -\frac{1}{2}\epsilon.
\end{align*}
We just proved that for any \(\epsilon>0\), we have a.s. for any \(n\) large enough
\begin{align*}
0 &\geqslant \inf_{\theta \in B(\theta_0, \delta)} \frac{2}{n}[l_{1:n}(X_{1:n}|\theta)-l_{1:n}(X_{1:n}|\theta_0)] \geqslant -\frac{1}{2}\epsilon,
\end{align*}
which immediately implies
\begin{align}
\int_{B(\theta_0, \delta)} \pi(\theta) \exp[l_{1:n}(X_{1:n}|\theta)-l_{1:n}(X_{1:n}|\theta_0)] \ud \theta &\geqslant e^{-\frac{1}{2}n\epsilon} \int_{B(\theta_0, \delta)} \pi(\theta) \ud \theta. \label{eq.minorationnumstrongconsistencyposterior}
\end{align}
\vspace{0.25em}\par\noindent\textsc{Conclusion.~}
Let now \(\epsilon>0\) and \(\delta>0\) small enough so that a.s. for any \(n\) large enough \eqref{eq.majorationnumstrongconsistencyposterior} and  \eqref{eq.minorationnumstrongconsistencyposterior} both hold. We have a.s. for any \(n\) large enough
\begin{align*}
\dfrac{\int_{B^c(\theta_0, \delta)} \pi(\theta) \exp[l_{1:n}(X_{1:n}|\theta)-l_{1:n}(X_{1:n}|\theta_0)] \ud \theta}{\int_{B(\theta_0, \delta)} \pi(\theta) \exp[l_{1:n}(X_{1:n}|\theta)-l_{1:n}(X_{1:n}|\theta_0)] \ud \theta} &\leqslant  \frac{\int_{B^c(\theta_0, \delta)} \pi(\theta) \ud \theta}{\int_{B(\theta_0, \delta)} \pi(\theta) \ud \theta} e^{-\frac{1}{2}n\epsilon} \conv 0,
\end{align*}
which ends the proof.
\end{proof}

\begin{proof}[\underline{Proof of Theorem \ref{theo.asympposteriorpseudo}}]
Because the posterior distribution of \(\theta\) in the pseudo-problem, \(\pi_n^*(\cdot|X_{1:n})\), can be written as
\begin{align*}
\pi_n^*(\theta|X_{1:n}) \propto \pi(\theta) \exp[l_{1:n}^*(X_{1:n}|\theta)],
\end{align*}
the posterior density of \(t^* = n^{\frac{1}{2}} (\theta-\widehat{\theta}_n^*) \in \R^3\) can be written as
\begin{align*}
\widetilde{\pi}_n^*(t|X_{1:n}) &= C_n^{-1} \pi(\widehat{\theta}_n^*+n^{-\frac{1}{2}}t)\exp[l_{1:n}^*(X_{1:n}|\widehat{\theta}_n^*+n^{-\frac{1}{2}}t)-l_{1:n}^*(X_{1:n}|\widehat{\theta}_n^*)]
\end{align*}
where
\begin{align}\label{eq.definitionconstanteCn}
C_n &= \int_{\R^3} \pi(\widehat{\theta}_n^*+n^{-\frac{1}{2}}t)\exp[l_{1:n}^*(X_{1:n}|\widehat{\theta}_n^*+n^{-\frac{1}{2}}t)-l_{1:n}^*(X_{1:n}|\widehat{\theta}_n^*)] \ud t.
\end{align}
Denoting
\begin{align}\label{eq.definitionfonctiongn}
g_n(t) &= \pi(\widehat{\theta}_n^*+n^{-\frac{1}{2}}t)\exp[l_{1:n}^*(X_{1:n}|\widehat{\theta}_n^*+n^{*-\frac{1}{2}}t)-l_{1:n}^*(X_{1:n}|\widehat{\theta}_n^*)] \nonumber\\
&\qquad- \pi(\theta_0) e^{-\frac{1}{2}t^\prime I(\theta_0) t},
\end{align}
to prove \eqref{eq.asympposteriorpseudo} it suffices to show that for any \(0\leqslant k\leqslant k_0\),
\begin{align}\label{eq.integralegnconvvers0}
\int_{\R^3} \|t\|^k |g_n(t)|\ud t \convps 0.
\end{align}
Indeed, if \eqref{eq.integralegnconvvers0} holds, \(C_n \convps \pi(\theta_0)(2\pi)^{\frac{3}{2}}|I(\theta_0)|^{-\frac{1}{2}}\) (\(k=0\)) and therefore, the integral in \eqref{eq.asympposteriorpseudo} which is dominated by
\begin{align*}
&C_n^{-1} \int_{\R^3} \|t\|^k |g_n(t)|\ud t \\
&\qquad + \int_{\R^3} \|t\|^k \left|C_n^{-1}\pi(\theta_0) e^{-\frac{1}{2}t^\prime I(\theta_0) t} - (2\pi)^{-\frac{1}{2}}|I(\theta_0)|^{\frac{1}{2}}e^{-\frac{1}{2}t^\prime I(\theta_0) t}\right|\ud t
\end{align*}
also goes to zero a.s.

Let \(0<\delta\) to be chosen later, and let \(0\leqslant k\leqslant k_0\). To show \eqref{eq.integralegnconvvers0}, we break \(\R^3\) into two regions
\begin{align*}
T_1(\delta) &= B^c(0, \delta n^{\frac{1}{2}}d_n) = \{t: \|t\| \geqslant \delta n^{\frac{1}{2}}d_n \} \\
T_2(\delta) &= B(0, \delta n^{\frac{1}{2}}d_n) = \{t: \|t\| < \delta n^{\frac{1}{2}}d_n \}
\end{align*}
and show that for \(i=1,2\)
\begin{align}\label{eq.integralegnconvpsvers0surTi}
\int_{T_i(\delta)} \|t\|^k |g_n(t)| \ud t \convps 0.
\end{align}

\vspace{0.25em}\par\noindent\textsc{Proof for \(i=1\).~} Note that \(\int_{T_1(\delta)} \|t\|^k |g_n(t)|\) is dominated by
\begin{align*}
&\int_{T_1(\delta)} \|t\|^k \pi(\widehat{\theta}_n^*+n^{\frac{1}{2}}t) \exp[l_{1:n}^*(X_{1:n}|\widehat{\theta}_n^*+n^{-\frac{1}{2}}t)-l_{1:n}^*(X_{1:n}|\widehat{\theta}_n^*)] \ud t\\
&\qquad + \int_{T_1(\delta)} \|t\|^k \pi(\theta_0) e^{-\frac{1}{2}t^\prime I(\theta_0) t} \ud t.
\end{align*}
The second integral trivially goes to zero. For the first integral, we observe that it can be rewritten as
\begin{align*}
n^{\frac{1}{2}} \int_{B^c(\widehat{\theta}_n^*, \delta d_n)} n^\frac{k}{2}\|\theta-\widehat{\theta}_n^*\|^k \pi(\theta) \exp[l_{1:n}^*(X_{1:n}|\theta)-l_{1:n}^*(X_{1:n}|\widehat{\theta}_n^*)] \ud \theta.
\end{align*}
The strong consistency of \(\widehat{\theta}_n^*\) (see Theorem \ref{theo.strongconsistencywithrate}) implies that a.s., for any \(n\) large enough
\begin{align*}
\|\widehat{\theta}_n^*-\theta_0\|<\frac{1}{2}\delta d_n.
\end{align*}
From this, we deduce that a.s., for any \(n\) large enough, \(B^c(\widehat{\theta}_n^*, \delta d_n) \subset B^c(\theta_0, \frac{1}{2}\delta d_n)\) and thus that the first integral is dominated by
\begin{align*}
n^{\frac{k+1}{2}}\int_{B^c(\theta_0, \frac{1}{2}\delta d_n)} \|\theta-\widehat{\theta}_n^*\|^k \pi(\theta) \exp[l_{1:n}^*(X_{1:n}|\theta)-l_{1:n}^*(X_{1:n}|\widehat{\theta}_n^*)] \ud \theta.
\end{align*}
Recalling that \(n^*\sim n\), Proposition \ref{prop.majorationdiffdeL} with \(\rho_n = d_n\) implies that there a.s. exists \(\epsilon > 0\) such that for any \(n\) large enough and any \(\theta\in B^c(\theta_0, \frac{1}{2}\delta d_n)\) we have
\begin{align*}
l_{1:n}^*(X_{1:n}|\theta) - l_{1:n}^*(X_{1:n}|\widehat{\theta}_n^*) \leqslant -\epsilon n d_n^2.
\end{align*}
It follows, using \eqref{eq.definitiondn} that, a.s. for any \(n\) large enough the first integral is dominated by
\begin{align*}
n^{\frac{k+1}{2}} \exp(-\epsilon n d_n^2) \int_\Theta \|\theta-\widehat{\theta}_n^*\|^k \pi(\theta) \ud t
&=n^{\frac{k+1}{2}} \exp(-\epsilon n d_n^2) \cdot \go(1)  \\
&\leqslant n^{\frac{k+1}{2}} n^{-\epsilon \log n}\cdot \go(1) \conv 0,
\end{align*}
since by \eqref{eq.definitiondn} we find that \(n d_n^2 \geqslant (\log n)^2\) for any \(n\) large enough. Hence \eqref{eq.integralegnconvpsvers0surTi} holds for \(i=1\).

\vspace{0.25em}\par\noindent\textsc{Proof for \(i=2\).~}
We first recall the multivariate Taylor expansion for a function \(g\) (k+1)-times continuously differentiable within a neighbourhood of \(y\in\R^n\). With the usual differential calculus notations
\begin{align*}
\uD^\alpha g(y) \cdot h^{(\alpha)} = \sum_{1\leqslant i_1,\ldots,i_\alpha\leqslant n} \frac{\partial^\alpha g}{\partial_{i_1}\cdots\partial_{i_\alpha}} (y) \cdot h_{i_1}\cdots h_{i_\alpha}
\end{align*}
we have
\begin{align}\label{eq.taylormultivarie}
g(x) = \sum_{\alpha=0}^k \frac{1}{\alpha!}\uD^\alpha g(y)\cdot(x-y)^{(\alpha)} + R_{k+1}(x)
\end{align}
where
\begin{align}\label{eq.taylormultivarieremainder}
R_{k+1}(x) = \frac{1}{(k+1)!} \int_{0}^{1} (1-s)^{k} \uD^{k+1} g(y+s(x-y)) \cdot (x-y)^{(k+1)} \ud s.
\end{align}

Before expanding the log-likelihood over \(T_2(\delta)\) in a such a way, we first have to make sure it is differentiable over the correct domain. Indeed, the strong consistency of \(\widehat{\theta}_n^*\) (see Theorem \ref{theo.strongconsistencywithrate}) implies that a.s., whatever \(\delta_0 >0\), for \(n\) large enough,
\begin{align*}
\|\widehat{\theta}_n^* - \theta_0\| < \delta_0 d_n.
\end{align*}
For \(\delta\) chosen small enough, since \(t\in T_2(\delta)\) implies
\begin{align*}
\|\theta - \widehat{\theta}_n^*\| < \delta d_n
\end{align*}
it follows from the triangle inequality that a.s. for \(n\) large enough,
\begin{align*}
\|\theta - \theta_0\| < (\delta_0 + \delta) d_n < d_n.
\end{align*}
A.s. for any \(n\) large enough, \(t\in T_2(\delta)\) hence implies \(\theta\in B(\theta_0, (\delta+\delta_0) d_n)\). We choose \(\delta_0\) and \(\delta\) small enough so that \(\delta +\delta_0 < 1\). This way, \(\theta\mapsto l_{1:n}^*(X_{1:n}|\theta)\) is guaranteed to be infinitely continuously differentiable over \(B(\theta_0, (\delta+\delta_0) d_n) \subset B(\theta_0, d_n)\).

Now expanding the log-likelihood in a Taylor series for any \(n\) large enough, and taking advantage of the fact that \(l_{1:n}^*(X_{1:n}|\widehat{\theta}_n^*) = 0\), we define \(B_{1:n}^*(\cdot)\) the symmetric matrix defined for \(u\in D_n\) by
\begin{align}
B_{1:n}^*(\theta) &= -
\left[\begin{array}{ccc}
\displaystyle \frac{\partial^2 l_{1:n}^*(X_{1:n}|\theta)}{\partial \gamma \partial \gamma} & \displaystyle \frac{\partial^2 l_{1:n}^*(X_{1:n}|\theta)}{\partial \gamma \partial u} & \displaystyle \frac{\partial^2 l_{1:n}^*(X_{1:n}|\theta)}{\partial \gamma \partial \sigma^2} \\
& \displaystyle \frac{\partial^2 l_{1:n}^*(X_{1:n}|\theta)}{\partial u \partial u} & \displaystyle \frac{\partial^2 l_{1:n}^*(X_{1:n}|\theta)}{\partial u \partial \sigma^2} \\
& & \displaystyle \frac{\partial^2 l_{1:n}^*(X_{1:n}|\theta)}{\partial \sigma^2 \partial \sigma^2}
\end{array}\right].\label{eq.definitionBnstar}
\end{align}
and write that
\begin{align}\label{eq.taylorloglikelihood}
l_{1:n}^*(X_{1:n}|\theta)-l_{1:n}^*(X_{1:n}|\widehat{\theta}_n^*) &= -\frac{1}{2}(\theta-\widehat{\theta}_n^*)^\prime\left(B_{1:n}^*(\widehat{\theta}_n^*)\right) (\theta-\widehat{\theta}_n^*) \nonumber\\
&\qquad +  R_{3,n}(\theta)
\end{align}
where
\begin{align}\label{eq.taylorloglikelihoodremainder}
R_{3,n}(\theta) = \frac{1}{3!} \int_{0}^{1} (1-s)^{2} \uD^3 l_{1:n}^*(X_{1:n}|\widehat{\theta}_n^*+s(\theta-\widehat{\theta}_n^*)) \cdot (\theta-\widehat{\theta}_n^*)^{(3)} \ud s.
\end{align}
Lemma \ref{lemma.dominationloglikethirdderiv} allows us to write that a.s. there exists a constant \(C\in R_+^*\) such that for any \(n\) large enough, for any \(t\in T_2(\delta)\)
\begin{align}\label{eq.taylorloglikelihood2}
l_{1:n}^*(X_{1:n}|\widehat{\theta}_n^*+n^{-\frac{1}{2}}t)-l_{1:n}^*(X_{1:n}|\widehat{\theta}_n^*) = -\frac{1}{2}t^\prime\left(n^{-1} B_{1:n}^*(\widehat{\theta}_n^*)\right) t + S_n(t)
\end{align}
where
\begin{align}\label{eq.taylorloglikelihoodremainder2}
|S_n(t)| \leqslant C n^{-\frac{1}{2}} \cdot \|t\|^3.
\end{align}

From \eqref{eq.taylorloglikelihoodremainder2}, we obtain that for any \(t\in T_2(\delta)\), \(S_n(t)\convps 0\). Because of Lemma \ref{lemma.Bn}, we have \(n^{-1} B_{1:n}^*(\widehat{\theta}_n^*) \convps I(\theta_0)\), and it follows immediately that for any \(t\in T_2(\delta)\),
\begin{align*}
g_n(t)\convps 0,
\end{align*}
and thus that
\begin{align*}
\|t\|^k g_n(t)\convps 0.
\end{align*}
From \eqref{eq.taylorloglikelihoodremainder2} we also obtain
\begin{align*}
|S_n(t)| &\leqslant C \delta d_n \|t\|^2.
\end{align*}
Lemma \ref{lemma.Bn}, combined with \eqref{eq.definitiondn}, \eqref{eq.taylorloglikelihood2} and the positivity of \(I(\theta_0)\), ensures that a.s. for any \(n\) large enough
\begin{align*}
|S_n(t)| \leqslant \frac{1}{4}t^\prime \left(n^{-1} B_{1:n}^*(\widehat{\theta}_n^*)\right) t,
\end{align*}
so that from \eqref{eq.taylorloglikelihood2}, a.s. for any \(n\) large enough
\begin{align}
\exp[l_{1:n}^*(X_{1:n}|\widehat{\theta}_n^*+n^{-\frac{1}{2}}t)-l_{1:n}^*(X_{1:n}|\widehat{\theta}_n^*)] \leqslant e^{-\frac{1}{4}t^\prime\left(n^{-1} B_{1:n}^*(\widehat{\theta}_n^*)\right) t} \leqslant e^{-\frac{1}{8}t^\prime I(\theta_0) t}.
\end{align}
Therefore, for \(n\) large enough, \(\|t\|^k |g_n(t)|\) is dominated by an integrable function on the set \(T_2(\delta)\) and \eqref{eq.integralegnconvpsvers0surTi} holds for \(i=2\) which completes the proof.
\end{proof}

\subsection{Proofs of Section \ref{sec:strongconsMLE}}\label{subsec:proof3}
\begin{proof}[\underline{Proof of Theorem \ref{theo.strongconsistency}}]
From \eqref{eq.diffdelog2}, it is easy to see that
\begin{align*}
\dfrac{2}{n}[l_{1:n}(X_{1:n}|\theta) - l_{1:n}(X_{1:n}|K)] &\leqslant \dfrac{2}{n}[l_{1:n}(X_{1:n}|\theta) - l_{1:n}(X_{1:n}|\theta_0)] \\
&\leqslant-b_n(\theta) + \frac{\sigma_0^2-\sigma^2}{\sigma^2\sigma_0^2}\left(\frac{1}{n}\sum_{i=1}^n \xi_i^2 - \sigma_0^2\right) \\
&\qquad  - \frac{2}{\sigma^2}\frac{1}{n}\sum_{i=1}^n [\mu(\eta_0, t_i) - \mu(\eta, t_i)] \xi_i.
\end{align*}
For any \(\theta^\prime\in \Theta\) and \(r>0\), let \(B(\theta^\prime, r) = \{\theta,;\; \|\theta^\prime-\theta\|_1 < r\}\). It is now obvious that
\begin{align}
&~\dfrac{2}{n}[l_{1:n}(X_{1:n}|B(\theta^\prime, r))-l_{1:n}(X_{1:n}|K)] \nonumber\\
&\leqslant \sup_{\theta\in B(\theta^\prime, r)} \left\{-b_n(\theta)\right\} + \sup_{\theta\in B(\theta^\prime, r)} \left| \frac{\sigma_0^2-\sigma^2}{\sigma^2\sigma_0^2} \right| \cdot \left|\frac{1}{n}\sum_{i=1}^n \xi_i^2 - \sigma_0^2\right| \nonumber \\
&\qquad + \sup_{\theta\in B(\theta^\prime, r)} \left\{\frac{2}{\sigma^2}\right\} \cdot \sup_{\theta\in B(\theta^\prime, r)} \left\{\left|\frac{1}{n}\sum_{i=1}^n [\mu(\eta_0, t_i) - \mu(\eta, t_i)] \xi_i\right|\right\}. \label{eq.sylwesterps}
\end{align}
Lemma \ref{lemma.sylwestertheo35} now ensures that
\begin{align*}
\sup_{\theta\in B(\theta^\prime, r)} \left|\frac{1}{n}\sum_{i=1}^n [\mu(\eta_0, t_i) - \mu(\eta, t_i)] \xi_i\right| &\convps 0,
\end{align*}
and \(\sigma^2\) being bounded away from 0 ensures the boundedness of \(\displaystyle \sup_{\theta\in B(\theta^\prime, r)} \left\{\frac{2}{\sigma^2}\right\}\) which implies
\begin{align*}
\sup_{\theta\in B(\theta^\prime, r)} \left\{\frac{2}{\sigma^2}\right\} \cdot \sup_{\theta\in B(\theta^\prime, r)} \left\{\left|\frac{1}{n}\sum_{i=1}^n [\mu(\eta_0, t_i) - \mu(\eta, t_i)] \xi_i\right|\right\} &\convps 0.
\end{align*}
Since \(\sigma^2\) is bounded away from 0, taking advantage of the Strong Law of Large Numbers, we also obtain
\begin{align*}
\sup_{\theta\in B(\theta^\prime, r)} \left| \frac{\sigma^2-\sigma_0^2}{\sigma^2\sigma_0^2} \right| \cdot \left|\frac{1}{n}\sum_{i=1}^n \xi_i^2 - \sigma_0^2\right| &\convps 0.
\end{align*}
We may thus rewrite \eqref{eq.sylwesterps} as
\begin{align}
\dfrac{2}{n}[l_{1:n}(X_{1:n}|B(\theta^\prime, r))-l_{1:n}(X_{1:n}|K)] &\leqslant \sup_{\theta\in B(\theta^\prime, r)} \left\{-b_n(\theta) \right\}+ R_n, \label{eq.sylwesterps3}
\end{align}
where \(R_n\convps 0\).

Assume now that \(\theta^\prime\neq\theta_0\), then we have
\begin{align}
\sup_{\theta\in B(\theta^\prime, r)}|b_n(\theta)-b(\theta^\prime)| &\leqslant \sup_{\theta\in B(\theta^\prime, r)} |b_n(\theta)-b_n(\theta^\prime)| + |b_n(\theta^\prime)-b(\theta^\prime)|. \label{ineq.majbn1}
\end{align}
Lemma \ref{lemma.sylwesterbnandb} (see \eqref{eq.sylwesterbnandb3}) ensures the existence of a \(r\) small enough, say \(r=r(\theta^\prime)\), such that
\begin{align}
\sup_{\theta\in B(\theta^\prime, r(\theta^\prime))} |b_n(\theta)-b_n(\theta^\prime)| &\leqslant \frac{1}{4} b(\theta^\prime), \label{ineq.majbn2}
\end{align}
uniformly in \(n\). For \(n\) large enough, that same Lemma \ref{lemma.sylwesterbnandb} (see \eqref{eq.sylwesterbnandb4}) also guarantees that
\begin{align}
|b_n(\theta^\prime)-b(\theta^\prime)| &\leqslant \frac{1}{4} b(\theta^\prime). \label{ineq.majbn3}
\end{align}
Adding inequalities \eqref{ineq.majbn2} and \eqref{ineq.majbn3} together and combining the result with \eqref{ineq.majbn1}, we deduce that for any \(n\) large enough
\begin{align*}
\sup_{\theta\in B(\theta^\prime, r(\theta^\prime))}|b_n(\theta)-b(\theta^\prime)| &\leqslant \frac{1}{2} b(\theta^\prime),
\intertext{i.e.}
\sup_{\theta\in B(\theta^\prime, r(\theta^\prime))}\left\{-b_n(\theta)\right\} &\leqslant -\frac{1}{2} b(\theta^\prime),
\end{align*}
which finally gives together with \eqref{eq.sylwesterps3}
\begin{align}
\forall\theta^\prime \neq \theta_0, \; \proba\left(\limsup_{n\conv+\infty}\dfrac{1}{n}[l_{1:n}(X_{1:n}|B(\theta^\prime, r(\theta^\prime)))-l_{1:n}(X_{1:n}|K)] \leqslant -\frac{1}{4}b(\theta^\prime)\right) &= 1. \label{eq.limsupdifflikeetb}
\end{align}
Since Lemma \ref{lemma.sylwesterbnandb} ensures that \(b(\theta^\prime) > 0\) for any \(\theta^\prime \neq \theta_0\), the previous statement implies
\begin{align}
\forall\theta^\prime \neq \theta_0, \; \proba\left(\exists n(\theta^\prime)\in\N,\;\forall n>n(\theta^\prime), \; l_{1:n}(X_{1:n}|B(\theta^\prime, r(\theta^\prime)))-l_{1:n}(X_{1:n}|K) < -1 \right) &= 1. \label{eq.sylwesterps4}
\end{align}
For a given \(\delta>0\), let us now define \(K(\delta) = K \setminus B(\theta_0, \delta)\). \(K(\delta)\) is obviously a compact set since \(K\) itself is a compact set. By compacity, from the covering
\begin{align*}
\bigcup_{\theta^\prime\in K(\delta)} &B(\theta^\prime, r(\theta^\prime)) \supset K(\delta),
\intertext{there exists a finite subcovering, i.e.}
\exists m(\delta)\in\N, \; \bigcup_{j=1}^{m(\delta)} &B(\theta_j^\prime, r(\theta_j^\prime)) \supset K(\delta).
\end{align*}
In particular, \eqref{eq.sylwesterps4} holds for \(\theta^\prime=\theta_j^\prime, j=1,\ldots,m(\delta)\). Let us define
\begin{align*}
n_0(\delta) &= \max_{j=1,\ldots,m(\delta)} n(\theta_j^\prime).
\end{align*}
We may now write
\begin{align*}
&\forall \delta > 0, \;\exists n_0(\delta)\in\N,\; \exists m(\delta)\in\N,\; \forall j=1,\ldots,m(\delta), \\
&\qquad \proba\left(\forall n>n_0(\delta), \; l_{1:n}(X_{1:n}|B(\theta_j^\prime, r(\theta_j^\prime)))-l_{1:n}(X_{1:n}|K) < -1 \right) = 1,
\intertext{which we turn into}
&\forall \delta > 0, \;\exists n_0(\delta)\in\N,\; \exists m(\delta)\in\N,\; \\
&\qquad \proba\left(\forall n>n_0(\delta), \; \forall j=1,\ldots,m(\delta), \; l_{1:n}(X_{1:n}|B(\theta_j^\prime, r(\theta_j^\prime)))-l_{1:n}(X_{1:n}|K) < -1 \right) = 1,
\end{align*}
thanks to the finiteness of \(m(\delta)\), and finally into
\begin{align*}
\forall \delta > 0, \;\exists n_0(\delta)\in\N,\; \proba\left(\forall n>n_0(\delta), \; l_{1:n}(X_{1:n}|K(\delta))-l_{1:n}(X_{1:n}|K) < -1 \right) = 1,
\end{align*}
because of the covering
\begin{align*}
\bigcup_{j=1}^{m(\delta)} B(\theta_j^\prime, r(\theta_j^\prime)) &\supset K(\delta).
\end{align*}
Let us now sum up what we have obtained so far. We proved that
\begin{align*}
\forall \delta > 0, \;\exists n_0(\delta)\in\N, \; \proba\left(\text{if } \forall n>n_0(\delta), \; l_{1:n}(X_{1:n}|\theta)-l_{1:n}(X_{1:n}|K) \geqslant \log e^{-1}, \text{ then } \theta \not\in K(\delta) \right) = 1,
\intertext{i.e.}
\exists a = e^{-1} \in\interoo{0}{1}, \;\forall \delta > 0, \;\exists n_0(\delta)\in\N, \;\proba\left(\text{if } \forall n>n_0(\delta), \; \theta\in K_n(a), \text{ then } \|\theta-\theta_0\|_1 < \delta \right) = 1,
\end{align*}
that is to say
\begin{align*}
\exists a\in\interoo{0}{1}, \;\proba\left(\lim_{n\conv+\infty} \sup_{\theta\in K_n(a)} \|\theta-\theta_0\|_1 = 0 \right) &= 1.
\end{align*}
\end{proof}

\begin{proof}[\underline{Proof of Proposition \ref{prop.feder314}}]In this proof \(\|\cdot\|\) will refer to the usual Euclidean norm. Reindexing whenever necessary, we also assume that the observations \(t_i\) are ordered, and we denote
\begin{align*}
t&=(t_1,\ldots,t_n), & X&=(X_1, \ldots, X_n),& \mu_0 &= (\mu(\eta_0, t_1), \ldots, \mu(\eta_0, t_n)),
\end{align*}
\begin{align*}
N_{0,n} &= \sup_{i\leqslant n} \{ i, \; t_i < u_0 \} = \frac{1}{n}\sum_{i=1}^n \one_{\interfo{t_i}{+\infty}}(u_0), & N_n &= \sup_{i\leqslant n} \{ i, \; t_i < \widehat{u}_n \} = \frac{1}{n}\sum_{i=1}^n \one_{\interfo{t_i}{+\infty}}(\widehat{u}_n),
\end{align*}
\begin{align*}
\zeta = \left\{\begin{array}{ll}(0,\ldots,0, \beta_0 + \gamma_0 t_{N_n+1}, \ldots, \beta_0 + \gamma_0 t_{N_{0,n}}, 0, \ldots, 0), & \text{if } N_n < N_{0,n} \\ (0,\ldots,0), & \text{if } N_n = N_{0,n} \\ (0,\ldots,0, \beta_0 + \gamma_0 t_{N_{0,n}+1}, \ldots, \beta_0 + \gamma_0 t_{N_n}, 0, \ldots, 0), & \text{if } N_n > N_{0,n}\end{array}\right.,
\end{align*}

Let \(\mathcal{G}\) be the linear space spanned by the 2 linearly independent \(n\)-vectors
\begin{align*}
v_1 &= (1, \ldots, 1, 0, \ldots, 0) & v_2 &= (t_{1}, \ldots, t_{N_n}, 0, \ldots, 0)
\end{align*}
(both of which have their last \(n-N_n\) coordinates valued to zero), and denote \(Q\) the orthogonal projection onto \(\mathcal{G}\).

Let \(\mathcal{G}^+\) denote the linear space spanned by \(v_1\), \(v_2\) and \(\mu_0\) and denote \(Q^+\) the orthogonal projection onto \(\mathcal{G}^+\). Observe that \(\mathcal{G}^+\) is also spanned by \(v_1\), \(v_2\) and \(\zeta\).

Finally, denote \(\mu^*\) the orthogonal projection of \(X\) onto \(\mathcal{G}^+\) and \(\widehat{\mu}\) the closest point to \(X\) in \(\mathcal{G}^+\) satisfying the continuity assumption of the model, i.e.
\begin{align*}
 \mu^* &= Q^+ X, & \widehat{\mu} &= (\mu(\widehat{\eta}_n, t_1), \ldots, \mu(\widehat{\eta}_n, t_n)).
\end{align*}

We have
\begin{align*}
\|X-\mu^*\|^2 + \|\mu^*-\widehat{\mu}||^2 = \|X-\widehat{\mu}\|^2 &\leqslant \|X-\mu_0\|^2,\\
\|X-\mu_0\|^2 - \|\mu^*-\mu_0\|^2 + \|\mu^*-\widehat{\mu}||^2 &\leqslant \|X-\mu_0\|^2,\\
\|\mu^*-\mu_0\|^2 - 2\left<\mu^*-\mu_0,\widehat{\mu}-\mu_0\right> +\|\widehat{\mu}-\mu_0\|^2 &\leqslant \|\mu^*-\mu_0\|^2 .
\end{align*}
Thus
\begin{align*}
\|\widehat{\mu}-\mu_0\|^2 &\leqslant 2\left<\mu^*-\mu_0,\widehat{\mu}-\mu_0\right> \leqslant 2\|\mu^*-\mu_0\| \cdot \|\widehat{\mu}-\mu_0\|,
\intertext{which leads to}
\|\widehat{\mu}-\mu_0\| &\leqslant 2\|\mu^*-\mu_0\| \leqslant 2 \|Q^+ \xi\|.
\end{align*}

Our aim is to show that a.s.
\begin{align}
\|Q^+\xi\| &= \go\left(\log n\right). \label{eq.objectifQplusxi}
\end{align}
If \eqref{eq.objectifQplusxi} held, then we would have a.s. \(\|\widehat{\mu}-\mu_0\| = \go\left(\log n\right)\) i.e. a.s.
\begin{align*}
\sum_{i=1}^n \left(\mu(\widehat{\eta}_n, t_i - \mu(\eta_0, t_i)\right)^2 &= \go\left(\log^2 n\right).
\end{align*}
Hence, a.s. for any open interval \(I\subset \interff{\underline{u}}{\overline{u}}\) we would have
\begin{align*}
\sum_{i=1}^n \left(\mu(\widehat{\eta}_n, t_i - \mu(\eta_0, t_i)\right)^2 \one_{I}(t_i) &= \go\left(\log^2 n\right).
\end{align*}
This would immediately imply the desired result, i.e. that a.s.
\begin{align*}
\min_{t_i\in I,\; i\leqslant n} \left|\mu(\widehat{\eta}_n, t_i) - \mu(\eta_0, t_i)\right| = \go\left(n^{-\frac{1}{2}}\log n\right),
\end{align*}
since a.s.
\begin{align*}
\go\left(\log^2 n\right) &= \sum_{i=1}^n \left(\mu(\widehat{\eta}_n, t_i - \mu(\eta_0, t_i)\right)^2 \one_{I}(t_i) \geqslant n \cdot \min_{t_i\in I,\; i\leqslant n} \left|\mu(\widehat{\eta}_n, t_i) - \mu(\eta_0, t_i)\right|^2 \cdot \frac{1}{n}\sum_{i=1}^{n} \one_{I}(t_i),
\end{align*}
where (see Assumption (A1))
\begin{align*}
\frac{1}{n}\sum_{i=1}^{n} \one_{I}(t_i) = \int_I \ud F_n(t) \conv \int_I \ud F(t) = \int_I f(t)\ud t > 0.
\end{align*}

Let us now prove that \eqref{eq.objectifQplusxi} indeed holds. We consider the two following mutually exclusive situations.
\vspace{0.25em}\par\noindent\textsc{Situation A: \(\zeta = (0, \ldots, 0)\).~} In this situation
\begin{align}
\|Q^+\xi\|&=\|Q\xi\|, \label{eq.qplusegalq}
\end{align}
and Cochran's theorem guarantees that \(\|Q\xi\|^2 \sim \chi^2(2)\) for \(n\geqslant2\). Hence, via Corollary \ref{lemma.suiteiidOlogn}, a.s.
\begin{align}
\|Q\xi\|   &= \go\left(\log n\right),\label{eq.normeQxi}
\end{align}
and \eqref{eq.objectifQplusxi} follows from \eqref{eq.qplusegalq} and \eqref{eq.normeQxi}.
\vspace{0.25em}\par\noindent\textsc{Situation B: \(\zeta \neq (0, \ldots, 0)\).~} Since
\begin{align}
\frac{\left|\left<\zeta,\xi\right>\right|}{\|\zeta\|} &\sim \loi{N}(0, \sigma_0^2),\nonumber
\intertext{we also have, via Lemma \ref{lemma.suiteiidOlogn}, a.s.}
\frac{\left|\left<\zeta,\xi\right>\right|}{\|\zeta\|} &= \go\left(\log n\right). \label{eq.prodscalzetaxi}
\end{align}
Notice that \eqref{eq.objectifQplusxi} follows from \eqref{eq.normeQxi} and \eqref{eq.prodscalzetaxi} if we manage to show that a.s.
\begin{align}
\|Q^+\xi\| &\leqslant \go(1) \cdot \left(\|Q\xi\| + \frac{\left|\left<\zeta,\xi\right>\right|}{\|\zeta\|}\right). \label{eq.normeQplusxi}
\end{align}
It thus now suffices to prove that a.s., for any \(g\in \mathcal{G}\)
\begin{align}
\left|\left<\zeta, g\right>\right|&= \|\zeta\| \; \|g\| \cdot \po(1), \label{eq.prodscalzetagunifeng}
\end{align}
where the \(\po(1)\) mentioned in \eqref{eq.prodscalzetagunifeng} is uniform in \(g\) over \(\mathcal{G}\) (i.e. a.s. \(\zeta\) is asymptotically uniformly orthogonal to \(\mathcal{G}\)), for \eqref{eq.normeQplusxi} is a direct consequence of \eqref{eq.prodscalzetagunifeng} and Lemma \ref{lemma.controlalpha} whose proof is found in \cite{Feder}.

\begin{lemma}\label{lemma.controlalpha}Let \(\mathcal{X}\) and \(\mathcal{Y}\) be two linear subspaces of an inner product space \(\mathcal{E}\). If there exists \(\alpha < 1\) such that
\begin{align*}
\forall (x,y) \in\mathcal{X}\times\mathcal{Y}, \; \left|\left<x,y\right>\right| \leqslant \alpha \|x\|\; \|y\|,
\end{align*}
then
\begin{align*}
\|x + y\| \leqslant (1 - \alpha)^{-1} (\|x^*\| + \|y^*\|),
\end{align*}
where \(x^*\) (resp. \(y^*\)) is the orthogonal projection of \(x+y\) onto \(\mathcal{X}\) (resp. \(\mathcal{Y}\)).
\end{lemma}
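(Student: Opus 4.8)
The plan is to reduce the whole statement to two scalar inequalities relating $\|x\|$ and $\|y\|$ to the norms of the projections, and then to solve the resulting $2\times2$ system. Write $z=x+y$ and keep in mind that $x^*$ and $y^*$ are the orthogonal projections of $z$ (not of $x$ or $y$ alone) onto $\mathcal{X}$ and $\mathcal{Y}$. First I would extract the defining property of the projection: since $x\in\mathcal{X}$, the residual $z-x^*$ is orthogonal to $x$, so $\langle z,x\rangle=\langle x^*,x\rangle$. Expanding the left-hand side as $\langle x+y,x\rangle=\|x\|^2+\langle y,x\rangle$, then bounding $\langle x^*,x\rangle$ by Cauchy--Schwarz and $|\langle y,x\rangle|$ by the hypothesis $|\langle x,y\rangle|\leqslant\alpha\|x\|\,\|y\|$, I obtain after dividing by $\|x\|$ the inequality $\|x\|\leqslant\|x^*\|+\alpha\|y\|$. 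The symmetric computation, using that $y\in\mathcal{Y}$ and $z-y^*$ is orthogonal to $y$, yields $\|y\|\leqslant\|y^*\|+\alpha\|x\|$.

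The second step is purely algebraic. Substituting the second inequality into the first gives $\|x\|(1-\alpha^2)\leqslant\|x^*\|+\alpha\|y^*\|$, hence $\|x\|\leqslant(1-\alpha^2)^{-1}(\|x^*\|+\alpha\|y^*\|)$, and symmetrically $\|y\|\leqslant(1-\alpha^2)^{-1}(\|y^*\|+\alpha\|x^*\|)$; this is precisely where the assumption $\alpha<1$ is used, since it guarantees $1-\alpha^2>0$ so that these bounds make sense. Adding the two and invoking the triangle inequality $\|z\|\leqslant\|x\|+\|y\|$ leads to
\[
\|x+y\|\leqslant\frac{(1+\alpha)(\|x^*\|+\|y^*\|)}{1-\alpha^2}=\frac{\|x^*\|+\|y^*\|}{1-\alpha},
\]
which is exactly the claimed estimate.

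There is no serious obstacle here: the argument is short and elementary, and the only delicate point is the identity $\langle z,x\rangle=\langle x^*,x\rangle$, which is the heart of the proof. It is what makes the hypothesis on the projection of $z=x+y$ (rather than of $x$ in isolation) do the work, and what couples the two subspaces through the single constant $\alpha$. For completeness I would dispose of the degenerate cases $x=0$ or $y=0$, in which the corresponding scalar inequality reduces to the trivial $0\leqslant\|x^*\|+\alpha\|y\|$, and remark that by Cauchy--Schwarz one may always take $\alpha\in[0,1)$ without loss of generality.
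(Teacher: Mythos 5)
Your proof is correct, but there is nothing in the paper to compare it against: the lemma is stated and its proof is immediately deferred to \cite{Feder}, so you have in effect supplied the missing self-contained argument. Your reduction is the right one: since \(x^{*}\) is the orthogonal projection of \(z=x+y\) onto \(\mathcal{X}\), the residual \(z-x^{*}\) is orthogonal to \(x\in\mathcal{X}\), whence \(\|x\|^{2}+\langle y,x\rangle=\langle x^{*},x\rangle\), and Cauchy--Schwarz together with the near-orthogonality hypothesis yields \(\|x\|\leqslant\|x^{*}\|+\alpha\|y\|\), plus the symmetric bound \(\|y\|\leqslant\|y^{*}\|+\alpha\|x\|\); the triangle inequality then finishes the proof. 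Two minor remarks. First, the algebra can be shortened: adding the two scalar inequalities directly gives \((1-\alpha)\left(\|x\|+\|y\|\right)\leqslant\|x^{*}\|+\|y^{*}\|\), which avoids the detour through \(1-\alpha^{2}\) and the implicit requirement \(\alpha\neq-1\). Second, your closing claim that ``by Cauchy--Schwarz one may always take \(\alpha\in[0,1)\)'' invokes the wrong tool: Cauchy--Schwarz caps \(\alpha\) at \(1\) but gives no lower bound. Nonnegativity of \(\alpha\), which your treatment of the degenerate cases and the inequality \((1-\alpha)^{-1}\geqslant1\) do rely on, comes instead from the hypothesis itself as soon as both subspaces contain nonzero vectors (apply it to such a pair). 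When one subspace is trivial, the statement with \(\alpha<0\) is actually false (take \(\mathcal{Y}=\{0\}\) and \(x\neq0\)), so the lemma must simply be read with \(\alpha\geqslant0\); this is harmless in the paper's application, where \(\mathcal{X}=\mathcal{G}\), \(\mathcal{Y}\) is the span of \(\zeta\), and \(\alpha\) bounds the nonnegative ratio \(\left|\left<\zeta,g\right>\right|/(\|\zeta\|\,\|g\|)\).
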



Observe that, as a consequence of Assumption (A1) and Theorem \ref{theo.Polya}, the three following convergences are uniform in \(u\) over \(\interff{\underline{u}}{\overline{u}}\) for \(k=0,1,2\),
\begin{align}
\frac{1}{n}\sum_{i=1}^n t_i^k \one_{\interfo{t_i}{+\infty}}(u)
= \int_{\underline{u}}^{u} t^k \ud F_n(t)
&\conv \int_{\underline{u}}^{u} t^k \ud F(t)
= \int_{\underline{u}}^{u} t^k f(t) \ud t. \label{eq.3convunif}
\end{align}

We have a.s., for any \(g(\phi) = (\cos \phi) v_1 + (\sin \phi) v_2 \in \mathcal{G}\), with \(\phi \in \interff{0}{2\pi}\)
\begin{align*}
\left|\left<\zeta, g(\phi)\right>\right| &= \left|\sum_{i=1}^{N_n} (\beta_0 + \gamma_0 t_i) (\cos\phi + t_i \sin\phi) - \sum_{i=1}^{N_{0,n}} (\beta_0 + \gamma_0 t_i) (\cos\phi + t_i \sin\phi) \right |\\
&\leqslant (\max(|\underline{u}|, |\overline{u}|) + 1) \cdot \left|\sum_{i=1}^{N_n} \left|\beta_0 + \gamma_0 t_i\right| - \sum_{i=1}^{N_{0,n}} \left|\beta_0 + \gamma_0 t_i\right| \right| \\
&\leqslant (\max(|\underline{u}|, |\overline{u}|) + 1) \cdot \|\zeta\|_1 \\
&\leqslant (\max(|\underline{u}|, |\overline{u}|) + 1) \cdot \|\zeta\| \cdot n^{\frac{1}{2}} |N_n - N_{0,n}|^{\frac{1}{2}} \\
&\leqslant (\max(|\underline{u}|, |\overline{u}|) + 1) \cdot \|\zeta\| \cdot n^{\frac{1}{2}}\left|\frac{1}{n}\sum_{i=1}^n \one_{\interfo{t_i}{+\infty}}(\widehat{u}_n) - \frac{1}{n}\sum_{i=1}^n \one_{\interfo{t_i}{+\infty}}(u_0) \right|^{\frac{1}{2}},
\end{align*}
i.e. we have a.s. for any \(\phi \in \interff{0}{2\pi}\)
\begin{align}
\left|\left<\zeta, g(\phi)\right>\right| &= n^{\frac{1}{2}} \|\zeta\| \cdot \po(1), \label{eq.prodscalzetag}
\end{align}
thanks to the strong consistency \(\widehat{u}_n\convps u_0\) (see Theorem \ref{theo.strongconsistency}) and the uniform convergence mentioned in \eqref{eq.3convunif} with (\(k=0\)). Observe that the \(\po(1)\) mentioned in \eqref{eq.prodscalzetag} is uniform in \(\phi\) over \(\interff{0}{2\pi}\). We also have a.s. for any \(\phi \in \interff{0}{2\pi}\)
\begin{align*}
\frac{1}{n}\|g(\phi)\|^2 &= \frac{1}{n}\sum_{i=1}^n (\cos \phi + t_i \sin \phi)^2 \one_{\interfo{t_i}{+\infty}}(\widehat{u}_n) \\
&= \frac{1}{n}\sum_{i=1}^n \one_{\interfo{t_i}{+\infty}}(\widehat{u}_n) \cos^2 \phi + 2 \frac{1}{n}\sum_{i=1}^n t_i \one_{\interfo{t_i}{+\infty}}(\widehat{u}_n) \cos\phi \sin\phi\\
&\qquad + \frac{1}{n}\sum_{i=1}^n t_i^2 \one_{\interfo{t_i}{+\infty}}(\widehat{u}_n) \sin^2 \phi \\
&\convps \cos^2\phi \int_{\underline{u}}^{u_0} f(t) \ud t + \cos\phi \sin\phi \int_{\underline{u}}^{u_0} 2tf(t) \ud t +  \sin^2\phi \int_{\underline{u}}^{u_0} t^2f(t) \ud t,
\end{align*}
once again making use of the strong consistency \(\widehat{u}_n\convps u_0\) (see Theorem \ref{theo.strongconsistency}) and taking advantage of all three uniform convergences mentioned in \eqref{eq.3convunif}.
We thus obviously have a.s., uniformly in \(\phi\) over \(\interff{0}{2\pi}\)
\begin{align}
\frac{1}{n}\|g(\phi)\|^2 &\conv \int_{\underline{u}}^{u_0} (\cos\phi + t\sin\phi)^2 f(t) \ud t. \label{eq.convnormg2}
\end{align}
The limit in \eqref{eq.convnormg2} is a positive and continuous function of \(\phi\), and is hence bounded, i.e. there exists \(m>0\) such that we have a.s.
\begin{align}
\frac{1}{n}\|g(\phi)\|^2 &\geqslant m + \po(1), \label{eq.convnormg2bis}
\intertext{i.e.}
\frac{1}{\|g(\phi)\|} &= \go(n^{-\frac{1}{2}}), \label{eq.convnormg2ter}
\end{align}
where the \(\po(1)\) mentioned in \eqref{eq.convnormg2bis} and the \(\go(n^{-\frac{1}{2}})\) mentioned in \eqref{eq.convnormg2ter} are uniform in \(\phi\) over \(\interff{0}{2\pi}\).

Combining \eqref{eq.prodscalzetag} and \eqref{eq.convnormg2ter} together, we have a.s. for any \(\phi \in \interff{0}{2\pi}\)
\begin{align}
\left|\left<\zeta, g(\phi)\right>\right| &= \|\zeta\| \; \|g(\phi)\| \cdot \po(1), \label{eq.prodscalzetagunifenphi}
\end{align}
where the \(\po(1)\) mentioned in \eqref{eq.prodscalzetagunifenphi} is uniform in \(\phi\) over \(\interff{0}{2\pi}\).

Hence, we have a.s, for any \(r\in \R_+^*\), and any \(\phi \in \interff{0}{2\pi}\), now denoting \(g(\phi) = (r\cos\phi)v_1 + (r\sin\phi)v_2 \) and applying \eqref{eq.prodscalzetagunifenphi} to \(r^{-1}g(\phi)\)
\begin{align*}
\left|\left<\zeta, g(\phi)\right>\right| = r \left|\left<\zeta, r^{-1}g(\phi)\right>\right| &= r \cdot \|\zeta\| \; \|r^{-1}g(\phi)\| \cdot \po(1) = \|\zeta\| \; \|g(\phi)\| \cdot \po(1),
\end{align*}
where the \(\po(1)\) mentioned is uniform in \(\phi\) over \(\interff{0}{2\pi}\) and does not depend on \(r\).

We immediately deduce that a.s. \eqref{eq.prodscalzetagunifeng} holds i.e. a.s. \(\zeta\) is asymptotically uniformly orthogonal to \(\mathcal{G}\), which completes the proof.
\end{proof}

\subsection{Proofs of Section \ref{sec:asympdistMLE}}\label{subsec:proof4}
\begin{proof}[\underline{Proof of Proposition \ref{prop.asympdistpseudo}}]We proceed as announced.
\vspace{0.25em}\par\noindent\textsc{Step 1.~} We first prove that a.s.
  \begin{align*}
  \exists N\in\N, \; \forall n>N, \; \widehat{u}_n^* &\in D_n.
  \end{align*}
  Let us notice that anything proven for the problem remains valid for the pseudo-problem. Because \(n^*\sim n\), we have a.s., thanks to Theorem \ref{theo.strongconsistencywithrate} and conditions \eqref{eq.definitiondn}, as \(n\conv+\infty\)
  \begin{align*}
  n^{\frac{1}{2}} (\log^{-1} n) \cdot \left(\widehat{u}_n^* - u_0\right) &= \go(1), \\
  n^{\frac{1}{2}} (\log^{-1} n) \cdot d_n &\conv +\infty,
  \intertext{and thus deduce from the ratio of these two quantities that}
  \frac{\widehat{u}_n^* - u_0}{d_n} &\convps 0,
  \end{align*}
  and this directly implies the desired result.

\vspace{0.25em}\par\noindent\textsc{Step 2.~} Let \(A_{1:n}^*(\cdot)\) be the column vector defined for \(u\in D_n\) by
\begin{align}\label{eq.definitionAnstar}
A_{1:n}^*(\theta) = \left(\left.\frac{\partial l_{1:n}^*(X_{1:n}|\theta)}{\partial \gamma}\right|_{\theta}, \left.\frac{\partial l_{1:n}^*(X_{1:n}|\theta)}{\partial u}\right|_{\theta}, \left.\frac{\partial l_{1:n}^*(X_{1:n}|\theta)}{\partial \sigma^2}\right|_{\theta}\right).
\end{align}

  Step 1 allows us to expand a.s. \(A_{1:n}^*(\widehat{\theta}_n^*)\) around \(\theta_0\) using a Taylor-Lagrange approximation
  \begin{align*}
  0 &= A_{1:n}^*(\widehat{\theta}_n^*) = A_{1:n}^*(\theta_0) - B_{1:n}^*(\widetilde{\theta}_n) \left({\widehat\theta}_n^* - \theta_0\right),
  \end{align*}
  where \(\widetilde{\theta}_n\) is a point between \(\widehat{\theta}_n^*\) and \(\theta_0\) (see \eqref{eq.definitionBnstar} for the definitions of \(B_{1:n}^*\)), and rewrite it as a.s.
  \begin{align*}
  \frac{1}{n^*} B_{1:n}^*(\widetilde{\theta}_n) \cdot n^{*\frac{1}{2}}\left({\widehat\theta}_n^* - \theta_0\right) = n^{*-\frac{1}{2}}A_{1:n}^*(\theta_0).
  \end{align*}
  Since \(\widehat{\theta}_n^* \conv \theta_0\), we also have \(\widetilde{\theta}_n \conv \theta_0\) and using both Lemmas \ref{lemma.An} and \ref{lemma.Bn} we immediately find that as \(n\conv+\infty\)
  \begin{align*}
  I(\theta_0) \cdot n^{*\frac{1}{2}}\left({\widehat\theta}_n^* - \theta_0\right) &\convloi \loi{N}\left(0, I(\theta_0)\right),
  \intertext{which means, remembering both that \(n^{*} \sim n\) and that \(I(\theta_0)\) is positive definite and thus invertible that as \(n\conv+\infty\)}
  n^{\frac{1}{2}}\left({\widehat\theta}_n^* - \theta_0\right) &\convloi \loi{N}\left(0, I(\theta_0)^{-1}\right).
  \end{align*}
\end{proof}

\begin{proof}[\underline{Proof of Theorem \ref{theo.strongconsistencywithrate}}]
We now prove that \[\|\widehat{\sigma}_n^2-\sigma_0^2\| = \go\left(n^{-\frac{1}{2}}\log n\right).\]
\vspace{0.25em}\par\noindent\textsc{Variance of noise \(\sigma^2\).~} Observe that
\begin{align}
\widehat{\sigma}_n^2 &= \frac{1}{n} \sum_{i=1}^n \left[X_i - {\widehat\gamma}_n \left(t_i - \widehat{u}_n) \one_{\interfo{t_i}{+\infty}}(\widehat{u}_n\right) \right]^2 \nonumber \\
&= \frac{1}{n} \sum_{i=1}^n \left[\gamma_0\cdot(t_i - u_0) \one_{\interfo{t_i}{+\infty}}(u_0) - {\widehat\gamma}_n\cdot(t_i - \widehat{u}_n) \one_{\interfo{t_i}{+\infty}}(\widehat{u}_n) + \xi_i \right]^2 \nonumber \\
&= \frac{1}{n} \sum_{i=1}^n \nu_i^2(\widehat{\eta}_n) + \frac{2}{n} \sum_{i=1}^n \nu_i(\widehat{\eta}_n) \xi_i + \frac{1}{n} \sum_{i=1}^n \xi_i^2, \label{eq.sigma2ai}
\end{align}
where we denote for \(i=1,\ldots,n\),
\begin{align}
\nu_i(\eta) = \gamma_0\cdot(t_i - u_0) \one_{\interfo{t_i}{+\infty}}(u_0) - \gamma\cdot(t_i - u) \one_{\interfo{t_i}{+\infty}}(u). \label{eq.definitionsigma2ai}
\end{align}
We have
\begin{align}
\sup_{i\in\N} \left|\nu_i(\widehat{\eta}_n)\right| &= \sup_{i\in\N} \left|\gamma_0\cdot(t_i - u_0) \one_{\interfo{t_i}{+\infty}}(u_0) - {\widehat\gamma}_n\cdot(t_i - \widehat{u}_n) \one_{\interfo{t_i}{+\infty}}(\widehat{u}_n)\right| \nonumber \\
&\leqslant \left|\gamma_0 - \widehat{\gamma}_n\right| \cdot \sup_{i\in\N} \left|(t_i - u_0) \one_{\interfo{t_i}{+\infty}}(u_0)\right| \nonumber \\
&\qquad + \left|{\widehat\gamma}_n\right| \cdot \sup_{i\in\N} \left|(t_i - u_0) \one_{\interfo{t_i}{+\infty}}(u_0)-(t_i - \widehat{u}_n) \one_{\interfo{t_i}{+\infty}}(\widehat{u}_n)\right| \nonumber \\
&= \go\left(\gamma_0 - \widehat{\gamma}_n\right) + \left|{\widehat\gamma}_n\right| \go\left(u_0 - \widehat{u}_n\right), \label{eq.dominationai}
\end{align}
using straightforward dominations and Lemma \ref{lemma.convpuissk}, so that in the end, thanks to the previous results we have a.s.
\begin{align}
\sup_{i\in\N} \left|\nu_i(\widehat{\eta}_n)\right| &= \go\left(n^{-\frac{1}{2}}\log n\right).
\end{align}
It is thus easy to see that a.s.
\begin{align}
\frac{1}{n} \sum_{i=1}^n \nu_i^2(\widehat{\eta}_n) &= \go\left(n^{-1}\log^2 n\right)
= \go\left(n^{-\frac{1}{2}}\log n\right), \label{eq.moyenneaicarregrando}
\intertext{and also that, via Corollary \ref{lemma.suiteiidOlogn}, a.s.}
\frac{2}{n} \sum_{i=1}^n \nu_i(\widehat{\eta}_n) \xi_i &= \frac{2}{n} \left(\sum_{i=1}^n \nu_i^2(\widehat{\eta}_n)\right)^{\frac{1}{2}} \cdot \go(\log n)
= \go\left(n^{-\frac{1}{2}}\log n\right). \label{eq.moyenneaixiigrando}
\end{align}
From the Law of the Iterated Logarithm \cite[see][Chapter 13, page 291]{Breiman} we have a.s.
\begin{align}
\frac{1}{n} \sum_{i=1}^n \left(\xi_i^2 - \sigma_0^2\right) &= \go\left(n^{-\frac{1}{2}}(\log\log n)^{\frac{1}{2}}\right)
=\go\left(n^{-\frac{1}{2}}\log n\right) \label{eq.moyennexiicarregrando}
\end{align}
and the desired result follows from \eqref{eq.moyenneaicarregrando}, \eqref{eq.moyenneaixiigrando} and \eqref{eq.moyennexiicarregrando} put together into \eqref{eq.sigma2ai}.
\end{proof}

\begin{proof}[\underline{Proof of Theorem \ref{theo.asympdist}}]
To finish the proof, we need to show \eqref{eq.asymppseudosigma2} i.e. that
\begin{align*}
\widehat{\sigma}_n^2 - \widehat{\sigma}_n^{2*} &= \pop\left(n^{-\frac{1}{2}}\right).
\end{align*}
We use the decomposition \eqref{eq.sigma2ai}
\begin{align*}
\widehat{\sigma}_n^2 &= \frac{1}{n} \sum_{i=1}^n \nu_i^2(\widehat{\eta}_n) + \frac{2}{n} \sum_{i=1}^n \nu_i(\widehat{\eta}_n) \xi_i + \frac{1}{n} \sum_{i=1}^n \xi_i^2,
\end{align*}
where \(\nu_i(\widehat{\eta}_n) = \gamma_0 \cdot (t_i - u_0) \one_{\interfo{t_i}{+\infty}}(u_0) - \widehat{\gamma}_n \cdot (t_i - \widehat{u}_n) \one_{\interfo{t_i}{+\infty}}(\widehat{u}_n)\).

Having proved in Proposition \ref{prop.asympdistpseudo} that
\begin{align}
{\widehat\gamma}_n^* - \gamma_0 &= \gop\left(n^{-\frac{1}{2}}\right), \nonumber
&{\widehat u}_n^* - u_0 &= \gop\left(n^{-\frac{1}{2}}\right)
\intertext{we add these relationships to those from \eqref{eq.asymppseudogammau} and find that}
{\widehat\gamma}_n - \gamma_0 &= \gop\left(n^{-\frac{1}{2}}\right),
&{\widehat u}_n - u_0 &= \gop\left(n^{-\frac{1}{2}}\right). \label{eq.asympdistpseudogammau}
\end{align}

We now use \eqref{eq.asympdistpseudogammau} together with \eqref{eq.dominationai}, we are able to write
\begin{align}
\sup_{i\in\N} \left|\nu_i(\widehat{\eta}_n)\right| &= \gop\left(n^{-\frac{1}{2}}\right).
\end{align}
It is hence easy to see that
\begin{align*}
\frac{1}{n} \sum_{i=1}^n \nu_i^2(\widehat{\eta}_n) &= \gop\left(n^{-1}\right)
= \pop\left(n^{-\frac{1}{2}}\right),
\intertext{and also that}
\frac{2}{n} \sum_{i=1}^n \nu_i(\widehat{\eta}_n) \xi_i &= \frac{2}{n} \left(\sum_{i=1}^n \nu_i^2(\widehat{\eta}_n)\right)^{\frac{1}{2}} \cdot \gop(1)
= \pop\left(n^{-\frac{1}{2}}\right),
\end{align*}
which once both substituted into \eqref{eq.sigma2ai} yield
\begin{align*}
\widehat{\sigma}_n^2 &= \frac{1}{n}\sum_{i=1}^n \xi_i^2 + \pop\left(n^{-\frac{1}{2}}\right).
\end{align*}
What was done above with the problem and \(\widehat{\sigma}_n^2\) can be done with the pseudo-problem and \(\widehat{\sigma}_n^{2*}\) without any kind of modification so that
\begin{align*}
\widehat{\sigma}_n^{2*} &= \frac{1}{n^*}\sum_{i=1}^{n^*} \xi_i^2 + \pop\left(n^{-\frac{1}{2}}\right).
\end{align*}
We observe that
\begin{align*}
\widehat{\sigma}_n^2 - \widehat{\sigma}_n^{2*} &= \frac{1}{n}\sum_{i=1}^{n} \xi_i^2 - \frac{1}{n^*}\sum_{i=1}^{n^*} \xi_i^2 + \pop\left(n^{-\frac{1}{2}}\right) \\
&= \left[\frac{1}{n}-\frac{1}{n^*}\right] \cdot \sum_{i=1}^{n^*} \xi_i^2 + \frac{1}{n} \cdot \sum_{i=n^*+1}^{n} \xi_i^2  + \pop\left(n^{-\frac{1}{2}}\right) \\
&= \frac{n^*-n}{n} \cdot \left(\frac{1}{n^*} \sum_{i=1}^{n^*} \xi_i^2\right) + \frac{n-n^*}{n} \cdot \left(\frac{1}{n-n^*} \sum_{i=n^*+1}^{n} \xi_i^2\right)  + \pop\left(n^{-\frac{1}{2}}\right) \\
&= \frac{n^*-n}{n} \cdot \left(\sigma_0^2 + \gop\left(n^{*-\frac{1}{2}}\right)\right) + \frac{n-n^*}{n} \cdot \left(\sigma_0^2 + \gop\left(\left(n-n^*\right)^{-\frac{1}{2}}\right)\right)  + \pop\left(n^{-\frac{1}{2}}\right),
\intertext{using the Central Limit Theorem, and in the end we get}
\widehat{\sigma}_n^2 - \widehat{\sigma}_n^{2*}
&= \frac{n^*-n}{n} \cdot \gop\left(n^{*-\frac{1}{2}}\right) + \frac{n-n^*}{n} \cdot \gop\left(\left(n-n^*\right)^{-\frac{1}{2}}\right) + \pop\left(n^{-\frac{1}{2}}\right) \\
&= \po(1) \cdot \gop\left(n^{-\frac{1}{2}}\right) + n^{-\frac{1}{2}} \cdot \gop\left(\left(\frac{n-n^*}{n}\right)^{\frac{1}{2}}\right) + \pop\left(n^{-\frac{1}{2}}\right) \\
&= \pop\left(n^{-\frac{1}{2}}\right) + n^{-\frac{1}{2}} \cdot \gop\left(\po(1)\right)  + \pop\left(n^{-\frac{1}{2}}\right)
= \pop\left(n^{-\frac{1}{2}}\right).
\end{align*}
\end{proof}

\section{Technical results}\label{sec:prooftechnical}
\begin{theorem}[Polya's Theorem]\label{theo.Polya}Let \((g_n)_{n\in\N}\) be a sequence of non decreasing (or non increasing) functions defined over \(I = \interff{a}{b}\subset\R\). If \(g_n\) converges pointwise to \(g\) (i.e. \(g_n(x) \conv g(x)\) as \(n\conv+\infty\), for any \(x\in I\)) and \(g\) is continuous then
\begin{align*}
\sup_{x\in I} \left|g_n(x) - g(x)\right| \xrightarrow[n\conv+\infty]{} 0.
\end{align*}
\end{theorem}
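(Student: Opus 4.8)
The plan is to reduce the problem to control at finitely many points by exploiting monotonicity together with the uniform continuity of the continuous limit $g$. Without loss of generality I would assume each $g_n$ is non-decreasing (the non-increasing case follows at once by replacing $g_n$ with $-g_n$). Since a pointwise limit of non-decreasing functions is itself non-decreasing, the limit $g$ is non-decreasing as well; being moreover continuous on the compact interval $\interff{a}{b}$, it is uniformly continuous.

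Fix $\epsilon>0$. First I would invoke the uniform continuity of $g$ to choose a finite grid $a=x_0<x_1<\cdots<x_m=b$ whose mesh is small enough that $g(x_{j+1})-g(x_j)<\epsilon$ for every $j$. Next, because there are only finitely many grid points and $g_n(x_j)\conv g(x_j)$ for each of them, I can select $N$ so that $|g_n(x_j)-g(x_j)|<\epsilon$ holds simultaneously for all $j=0,\ldots,m$ whenever $n\geqslant N$. This is exactly where the finiteness of the grid is used: pointwise convergence alone is not uniform, but it is uniform across a finite set of points.

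The key step is then the monotonicity sandwich. For an arbitrary $x\in\interff{a}{b}$, say $x\in\interff{x_j}{x_{j+1}}$, monotonicity of both $g_n$ and $g$ gives $g_n(x_j)\leqslant g_n(x)\leqslant g_n(x_{j+1})$ and $g(x_j)\leqslant g(x)\leqslant g(x_{j+1})$. Subtracting and inserting the grid-point estimates yields, for $n\geqslant N$, the upper bound $g_n(x)-g(x)\leqslant [g_n(x_{j+1})-g(x_{j+1})]+[g(x_{j+1})-g(x_j)]<2\epsilon$ and symmetrically the lower bound $g_n(x)-g(x)\geqslant [g_n(x_j)-g(x_j)]-[g(x_{j+1})-g(x_j)]>-2\epsilon$. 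Since both bounds are independent of $x$, we obtain $\sup_{x\in I}|g_n(x)-g(x)|\leqslant 2\epsilon$ for all $n\geqslant N$, which is the asserted uniform convergence.

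I expect the only genuine subtlety to be this sandwich: at an intermediate point $x$ one cannot appeal to pointwise convergence directly, but monotonicity traps $g_n(x)$ between its values at the two neighbouring grid points, where the convergence is controlled, while the continuity of $g$ (hence its small oscillation over each subinterval) keeps the error introduced by this trapping below $\epsilon$. Everything else, in particular the construction of the grid and the selection of $N$, is routine.
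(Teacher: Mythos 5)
Your proposal is correct and follows essentially the same route as the paper's proof: reduce to the non-decreasing case, build a finite grid on which $g$ varies by less than $\epsilon$, use pointwise convergence at the finitely many grid points, and trap $g_n(x)$ between its values at neighbouring grid points via monotonicity. The only cosmetic differences are that you construct the grid from uniform continuity while the paper uses the monotone range of $g$ directly, and your explicit $2\epsilon$ bound is in fact the correct constant (the paper's final display states $\epsilon$ where its own estimates give $2\epsilon$).
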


\begin{proof}[Proof of Lemma \ref{theo.Polya}]
Assume the functions \(g_n\) are non decreasing over \(I\) (if not, consider their opposites \(-g_n\)). \(g\) is continuous over \(I\) and thus bounded since \(I\) is compact. \(g\) is also non decreasing over \(I\) as the limit of a sequence of non decreasing functions. Let \(\epsilon > 0\) and \(k> \frac{g(b)-g(a)}{\epsilon}\) such that
\begin{align*}
\exists a=a_0 < \ldots < a_k=b \in I^{k+1}, \;\forall i=0,\ldots,k-1, \; g(a_{i+1})-g(a_i) < \epsilon.
\end{align*}
Now let \(x \in I\) and let \(i\in\N\) such that \(a_i \leqslant x \leqslant a_{i+1}\). Since \(g_n\) and \(g\) are non decreasing, we find that
\begin{align*}
g_n(x) - g(x) \leqslant g_n(a_{i+1})-g(a_i) &\leqslant g_n(a_{i+1}) - g(a_{i+1}) + \epsilon,\\
g_n(x) - g(x) \geqslant g_n(a_i)-g(a_{i+1}) &\geqslant g_n(a_i) - g(a_i) - \epsilon.
\end{align*}
The pointwise convergence of \(g_n\) to \(g\) and the finiteness of \(k\) together ensure that
\begin{align*}
\exists N_0\in\N, \;\forall n\geqslant N_0, \;\forall i=0,\ldots,k, \;\left|g_n(a_i)-g(a_i)\right| < \epsilon,
\end{align*}
which implies with both of the inequations mentioned above that
\begin{align*}
\exists N_0\in\N, \;\forall n\geqslant N_0, \;\forall x\in I, \;\left|g_n(x)-g(x)\right| < \epsilon.
\end{align*}
\end{proof}

\begin{lemma}\label{lemma.convpuissk}
Let \(k\in\N^*\), there exists a constant \(C\in\R_+^*\) such that for any \((u, u^\prime) \in \interff{\underline{u}}{\overline{u}}^2\)
\begin{align}
\sup_{t\in \interff{\underline{u}}{\overline{u}}} | (t - u^\prime)^k \one_{\interfo{t}{+\infty}}(u^\prime) - (t - u)^k \one_{\interfo{t}{+\infty}}(u) | &=  C|u-u^\prime|. \label{eq.lemma.convpuissk1}
\end{align}
\end{lemma}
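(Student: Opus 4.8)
The plan is to establish the stated estimate as a Lipschitz-type bound in the threshold, by a direct case analysis on the position of the free variable $t$ relative to the two thresholds $u$ and $u'$. First I would observe that the quantity inside the supremum is unchanged when $u$ and $u'$ are swapped, so I may assume without loss of generality that $u\leqslant u'$. Writing $\psi_v(t) = (t-v)^k \one_{\interfo{t}{+\infty}}(v)$, the map $\psi_v$ equals $(t-v)^k$ on $\{t\leqslant v\}$ and vanishes on $\{t>v\}$; accordingly the difference $\psi_{u'}(t)-\psi_u(t)$ takes three distinct forms on the three consecutive intervals cut out by $u$ and $u'$.

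On $\{t\leqslant u\}$ both indicators are active and the difference equals $(t-u')^k-(t-u)^k$. Here I would use the telescoping factorisation $a^k-b^k=(a-b)\sum_{j=0}^{k-1}a^j b^{k-1-j}$ with $a=t-u'$ and $b=t-u$, noting that $a-b=u-u'$ while each of $|a|$ and $|b|$ is at most $\overline{u}-\underline{u}$ because $t,u,u'\in\interff{\underline{u}}{\overline{u}}$; this bounds the difference by $k(\overline{u}-\underline{u})^{k-1}\,|u-u'|$. On $\{t>u'\}$ both indicators vanish and the difference is identically zero, so that region contributes nothing.

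The only delicate region is the intermediate one, $\{u<t\leqslant u'\}$, where exactly one indicator survives and the difference reduces to $(t-u')^k$. The point is that this interval is itself controlled by $|u-u'|$: for such $t$ one has $0\leqslant u'-t< u'-u=|u-u'|$, hence $|t-u'|\leqslant|u-u'|$ and therefore $|(t-u')^k|\leqslant(\overline{u}-\underline{u})^{k-1}|u-u'|$. Taking the supremum over $t$ and keeping the largest of the three constants yields the result with, for example, $C=k(\overline{u}-\underline{u})^{k-1}$.

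The only real subtlety is this middle region, which is created by the jump discontinuity of the indicator: one might worry that the discontinuity destroys Lipschitz continuity in the thresholds, but it is harmless because the surviving term $(t-u')^k$ is evaluated at a point $t$ trapped within distance $|u-u'|$ of $u'$, so it is itself of order $|u-u'|$. I read the displayed identity \eqref{eq.lemma.convpuissk1} as the corresponding upper bound $\leqslant C|u-u'|$, which is the form in which the lemma is used, e.g. with $k=1$ in \eqref{eq.dominationai}; note that genuine equality holds when $k=1$ with $C=1$, since the region $\{t\leqslant u\}$ then contributes exactly $|u-u'|$ for every admissible $t$.
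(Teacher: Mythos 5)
Your proof is correct and takes essentially the same approach as the paper: the paper splits the difference via the triangle inequality into a smooth part, bounded by the mean value theorem (where you use the equivalent telescoping factorisation $a^k-b^k=(a-b)\sum_{j}a^jb^{k-1-j}$), and an indicator-jump part, which it bounds exactly as you do by noting that the surviving term is evaluated at a point within distance $|u-u^\prime|$ of the threshold. Your reading of the displayed ``$=$'' as ``$\leqslant$'' is also consistent with what the paper's own proof actually establishes, since it too only derives the upper bound.
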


\begin{proof}[Proof of Lemma \ref{lemma.convpuissk}]
For any \((u, u^\prime) \in \interff{\underline{u}}{\overline{u}}^2\) we have
\begin{align}
\sup_{t\in \interff{\underline{u}}{\overline{u}}} | (t - u^\prime)^k \one_{\interfo{t}{+\infty}}(u^\prime) -  (t - u)^k \one_{\interfo{t}{+\infty}}(u)|
&\leqslant  \sup_{t\in \interff{\underline{u}}{\overline{u}}} \{ |(t - u^\prime)^k - (t - u)^k| \one_{\interfo{t}{+\infty}}(u^\prime) \} \nonumber\\
&\quad + \sup_{t\in \interff{\underline{u}}{\overline{u}}} \{ |t - u|^k |\one_{\interfo{t}{+\infty}}(u^\prime) - \one_{\interfo{t}{+\infty}}(u) | \}. \label{eq.lemma.convpuissk2}
\end{align}
The mean value theorem guarantees that there exists \(v\) between \(u\) and \(u^\prime\) such that
\begin{align*}
(t-u^\prime)^k - (t-u)^k &= -k(t-v)^{k-1}(u^\prime-u).
\end{align*}
We thus have
\begin{align}
\sup_{t\in \interff{\underline{u}}{\overline{u}}} \{|(t-u^\prime)^k - (t-u)^k| \one_{\interfo{t}{+\infty}}(u^\prime)\} &\leqslant \sup_{t\in \interff{\underline{u}}{\overline{u}}} |(t-u^\prime)^k - (t-u)^k| \nonumber\\
&\leqslant k|\overline{u}-\underline{u}|^{k-1} |u-u^\prime|. \label{eq.lemma.convpuissk3}
\end{align}
Because \( |t-u| \leqslant |u^\prime-u| \) whenever \(|\one_{\interfo{t}{+\infty}}(u^\prime) - \one_{\interfo{t}{+\infty}}(u) | \neq 0\), we also find that
\begin{align}
\sup_{t\in \interff{\underline{u}}{\overline{u}}} \{|t - u|^k |\one_{\interfo{t}{+\infty}}(u^\prime) - \one_{\interfo{t}{+\infty}}(u) | \} \leqslant |u-u^\prime|^k \leqslant |u-u^\prime| |\overline{u}-\underline{u}|^{k-1}. \label{eq.lemma.convpuissk4}
\end{align}
And now \eqref{eq.lemma.convpuissk1} is a simple consequence of \eqref{eq.lemma.convpuissk2}, \eqref{eq.lemma.convpuissk3} and \eqref{eq.lemma.convpuissk4}.
\end{proof}

\begin{lemma}\label{lemma.convergenceuniformemu}
For any \(\eta^\prime\in\R\times\interff{\underline{u}}{\overline{u}}\), there exists \(C\in\R_+^*\) such that for any \(\eta\in\R\times\interff{\underline{u}}{\overline{u}}\)
\begin{align}
\sup_{t\in\interff{\underline{u}}{\overline{u}}}|\mu(\eta, t)-\mu(\eta^\prime, t)|
&\leqslant C\|\eta-\eta^\prime\|.\label{eq.lemma.convergenceuniformemu1}
\end{align}
\end{lemma}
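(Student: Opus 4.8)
The plan is to reduce this uniform Lipschitz estimate to the one already available for the indicator-weighted power functions in Lemma \ref{lemma.convpuissk}. Writing $\eta = (\gamma, u)$ and $\eta^\prime = (\gamma^\prime, u^\prime)$ and recalling that $\mu(\eta, t) = \gamma\cdot(t-u)\one_{\interfo{t}{+\infty}}(u)$, the idea is to vary the two coordinates one at a time by inserting the intermediate point $(\gamma^\prime, u)$, which gives the telescoping decomposition
\[
\mu(\eta, t) - \mu(\eta^\prime, t) = (\gamma - \gamma^\prime)(t-u)\one_{\interfo{t}{+\infty}}(u) + \gamma^\prime\left[(t-u)\one_{\interfo{t}{+\infty}}(u) - (t-u^\prime)\one_{\interfo{t}{+\infty}}(u^\prime)\right].
\]
The first summand isolates the dependence on the slope $\gamma$ (with no indicator moving), while the second isolates the dependence on the break-point $u$ (where the non-differentiability lives).

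First I would bound the slope term directly: since $t$ and $u$ both lie in $\interff{\underline{u}}{\overline{u}}$ we have $|t-u|\leqslant \overline{u}-\underline{u}$ and the indicator is at most $1$, so its supremum over $t$ is at most $(\overline{u}-\underline{u})\,|\gamma - \gamma^\prime|$. Then for the break-point term I would invoke Lemma \ref{lemma.convpuissk} with $k=1$, which furnishes a constant $C_1\in\R_+^*$, independent of $u$ and $u^\prime$, with $\sup_{t}|(t-u)\one_{\interfo{t}{+\infty}}(u) - (t-u^\prime)\one_{\interfo{t}{+\infty}}(u^\prime)| = C_1\,|u-u^\prime|$; this bounds the second supremum by $|\gamma^\prime|\,C_1\,|u-u^\prime|$. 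Combining the two estimates and bounding each coordinate difference $|\gamma-\gamma^\prime|$ and $|u-u^\prime|$ by $\|\eta-\eta^\prime\|$ (up to the fixed equivalence-of-norms constant on $\R^2$), I would conclude with $C = \kappa\left[(\overline{u}-\underline{u}) + |\gamma^\prime|\,C_1\right]$, which depends only on $\eta^\prime$, through $\gamma^\prime$, exactly as the statement permits.

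The only place where the irregularity of the model is felt is the control of the break-point term, and this is precisely what Lemma \ref{lemma.convpuissk} was set up to handle: the jump of $\one_{\interfo{t}{+\infty}}(u)$ as $u$ crosses $t$ has already been absorbed there. Consequently I do not expect a genuine obstacle here — everything reduces to an elementary split, the compactness bound $|t-u|\leqslant\overline{u}-\underline{u}$, and a single citation — the mild subtlety being only to keep track of the fact that the admissible constant is allowed to grow with $|\gamma^\prime|$, which is harmless since $\eta^\prime$ is fixed.
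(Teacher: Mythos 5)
Your proof is correct and follows essentially the same route as the paper's: the identical decomposition through the intermediate point \((\gamma^\prime, u)\), the crude bound \(|t-u|\leqslant\overline{u}-\underline{u}\) for the slope term, and an appeal to Lemma \ref{lemma.convpuissk} with \(k=1\) for the break-point term. The only cosmetic difference is that you make the final constant \(C = \kappa\left[(\overline{u}-\underline{u}) + |\gamma^\prime| C_1\right]\) and the norm-equivalence step explicit, which the paper leaves implicit.
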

\begin{proof}[Proof of Lemma \ref{lemma.convergenceuniformemu}]We have indeed
\begin{align*}
\sup_{t\in\interff{\underline{u}}{\overline{u}}}|\mu(\eta, t)-\mu(\eta^\prime, t)|
&= \sup_{t\in\interff{\underline{u}}{\overline{u}}}|\gamma\cdot(t-u)\one_{\interfo{t}{+\infty}}(u) - \gamma^\prime(t-u^\prime)\one_{\interfo{t}{+\infty}}(u^\prime)| \\
&\leqslant \sup_{t\in\interff{\underline{u}}{\overline{u}}}|[\gamma-\gamma^\prime](t-u)\one_{\interfo{t}{+\infty}}(u)| \\
&\qquad + \sup_{t\in\interff{\underline{u}}{\overline{u}}}|\gamma^\prime[(t-u)\one_{\interfo{t}{+\infty}}(u) - (t-u^\prime)\one_{\interfo{t}{+\infty}}(u^\prime)]| \\
&\leqslant |\gamma-\gamma^\prime| \cdot \sup_{t\in\interff{\underline{u}}{\overline{u}}}|t-u| \\
&\qquad + |\gamma^\prime| \cdot \sup_{t\in\interff{\underline{u}}{\overline{u}}} |(t-u)\one_{\interfo{t}{+\infty}}(u) - (t-u^\prime)\one_{\interfo{t}{+\infty}}(u^\prime)| \\
&\leqslant |\gamma-\gamma^\prime| \cdot |\overline{u}-\underline{u}| + |\gamma^\prime| \cdot \sup_{t\in\interff{\underline{u}}{\overline{u}}} |(t-u)\one_{\interfo{t}{+\infty}}(u) - (t-u^\prime)\one_{\interfo{t}{+\infty}}(u^\prime)|.
\end{align*}
And now \eqref{eq.lemma.convergenceuniformemu1} is a simple consequence of Lemma \ref{lemma.convpuissk}.
\end{proof}

\begin{lemma}\label{lemma.grid}
Let \(A\subset\R\times\interff{\underline{u}}{\overline{u}}\) be a bounded set. Then,
\begin{align*}
&\forall \epsilon>0,\; \exists m(\epsilon)\in\N,\; \exists \eta_1, \ldots, \eta_{m(\epsilon)} \in A,\\
&\forall\eta,\eta^\prime\in A,\; \exists j,j^\prime\in\{1,\ldots,m(\epsilon)\},\; \sup_{t\in\interff{\underline{u}}{\overline{u}}} \left|\big[\mu(\eta, t) - \mu(\eta^\prime, t)\big] - \big[\mu(\eta_j, t) - \mu(\eta_{j^\prime}, t)\big]\right| < \epsilon,
\end{align*}
\end{lemma}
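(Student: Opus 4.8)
The plan is to reduce the statement to a standard finite-net argument, the key preliminary being a \emph{uniform} Lipschitz bound for the map $\eta \mapsto \mu(\eta, \cdot)$ over the bounded set $A$. Lemma \ref{lemma.convergenceuniformemu} already furnishes, for each fixed $\eta^\prime$, a constant controlling $\sup_{t}|\mu(\eta, t) - \mu(\eta^\prime, t)|$ by a multiple of $\|\eta - \eta^\prime\|$. Inspection of its proof reveals that this constant depends on $\eta^\prime$ only through $|\gamma^\prime|$ (the remaining contributions coming from $|\overline{u} - \underline{u}|$ and the universal constant of Lemma \ref{lemma.convpuissk} with $k = 1$). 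Since $A$ is bounded, its first coordinate is bounded, so $|\gamma^\prime|$ stays bounded as $\eta^\prime$ ranges over $A$. This yields a single constant $L\in\R_+^*$ such that
\[
\sup_{t\in\interff{\underline{u}}{\overline{u}}}|\mu(\eta, t)-\mu(\eta^\prime, t)| \leqslant L\,\|\eta-\eta^\prime\|, \qquad \text{for all } \eta, \eta^\prime \in A.
\]

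Given $\epsilon > 0$, I would then exploit the total boundedness of the bounded set $A \subset \R^2$: extracting a maximal $\tfrac{\epsilon}{2L}$-separated subset $\eta_1, \ldots, \eta_{m(\epsilon)}$ of $A$ produces a finite net whose centres lie in $A$, so that every $\eta\in A$ satisfies $\|\eta - \eta_j\| < \tfrac{\epsilon}{2L}$ for some index $j$. This is where the cardinality $m(\epsilon)$ is introduced, and it depends only on $\epsilon$ (and on $L$ and the diameter of $A$), as required.

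Finally, for arbitrary $\eta, \eta^\prime \in A$, I would pick indices $j, j^\prime$ with $\|\eta - \eta_j\| < \tfrac{\epsilon}{2L}$ and $\|\eta^\prime - \eta_{j^\prime}\| < \tfrac{\epsilon}{2L}$. Regrouping the difference as $[\mu(\eta, t) - \mu(\eta_j, t)] - [\mu(\eta^\prime, t) - \mu(\eta_{j^\prime}, t)]$ and applying the triangle inequality together with the uniform bound above gives
\[
\sup_{t}\left|[\mu(\eta, t) - \mu(\eta^\prime, t)] - [\mu(\eta_j, t) - \mu(\eta_{j^\prime}, t)]\right| \leqslant L\|\eta - \eta_j\| + L\|\eta^\prime - \eta_{j^\prime}\| < \epsilon,
\]
which is exactly the claimed inequality.

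The only genuinely delicate point is securing the uniform Lipschitz constant $L$ in the first step; everything rests on the boundedness of $A$ to keep $|\gamma^\prime|$ under control, so that the $\eta^\prime$-dependent constant of Lemma \ref{lemma.convergenceuniformemu} can be chosen once and for all. Once $L$ is fixed, the remainder is the routine combination of a finite $\epsilon$-net and the triangle inequality.
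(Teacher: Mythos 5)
Your proof is correct and follows essentially the same route as the paper's: a finite net of the bounded set $A$ combined with Lemma \ref{lemma.convergenceuniformemu} and the triangle inequality. In fact you are slightly more careful than the paper, which glosses over the point you isolate explicitly, namely that the constant of Lemma \ref{lemma.convergenceuniformemu} can be made uniform over $\eta^\prime\in A$ because it depends on $\eta^\prime$ only through the bounded quantity $|\gamma^\prime|$.
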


\begin{proof}[Proof of Lemma \ref{lemma.grid}]It suffices to prove the following claim
\begin{align*}
&\forall \epsilon>0,\; \exists m(\epsilon)\in\N,\; \exists \eta_1, \ldots, \eta_{m(\epsilon)} \in A, \\
&\forall\eta\in A,\; \exists j\in\{1,\ldots,m(\epsilon)\},\; \sup_{t\in\interff{\underline{u}}{\overline{u}}} |\mu(\eta, t)-\mu(\eta_j, t)| < \epsilon.
\end{align*}
and then use the triangle inequality. To see that the claim holds, it suffices, thanks to Lemma \ref{lemma.convergenceuniformemu}, to exhibit a finite and tight enough grid of \(A\) such that any point in \(A\) lies close enough to a point of the grid. The existence of such a grid is obviously guaranteed since \(A\subset\R^2\) is bounded.
\end{proof}

\begin{lemma}\label{lemma.sylwesterbnandb}Recall the definition of \(b_n\) given in \eqref{eq.definitionsylwesterbn}. Let
\begin{align}
b(\theta) &= \left(\frac{\sigma_0^2}{\sigma^2}-1-\log\frac{\sigma_0^2}{\sigma^2}\right) + \frac{1}{\sigma^2}\int_{\underline{u}}^{\overline{u}}\left[\mu(\eta_0, t) - \mu(\eta, t)\right]^2 f(t)\ud t.\label{eq.definitionsylwesterb}
\end{align}
Then, under Assumptions (A1)--(A4),
\begin{align}
 &b_n(\theta) \geqslant 0. \label{eq.sylwesterbnandb1}\\
 &b(\theta) \geqslant 0, \text{ with equality if and only if }\theta = \theta_0. \label{eq.sylwesterbnandb2}\\
 &b_n(\theta^\prime) \conv b_n(\theta), \text{ uniformly in } n, \text{ as } \theta^\prime\conv\theta. \label{eq.sylwesterbnandb3} \\
 &b_n(\theta) \conv b(\theta), \text{ as } n\conv+\infty. \label{eq.sylwesterbnandb4}
\end{align}
\end{lemma}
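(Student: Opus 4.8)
The plan is to treat the four assertions in turn, the first three being largely self-contained while \eqref{eq.sylwesterbnandb4} will follow directly from Assumption (A1). Throughout I write $b_n(\theta) = \phi(\sigma^2) + \sigma^{-2} D_n(\eta)$ and $b(\theta) = \phi(\sigma^2) + \sigma^{-2} D(\eta)$, where $\phi(\sigma^2) = \tfrac{\sigma_0^2}{\sigma^2} - 1 - \log\tfrac{\sigma_0^2}{\sigma^2}$, $D_n(\eta) = \tfrac1n\sum_{i=1}^n[\mu(\eta_0,t_i)-\mu(\eta,t_i)]^2$ and $D(\eta) = \int_{\underline u}^{\overline u}[\mu(\eta_0,t)-\mu(\eta,t)]^2 f(t)\ud t$. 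For \eqref{eq.sylwesterbnandb1} I would set $x = \sigma_0^2/\sigma^2 > 0$ and invoke the elementary inequality $x - 1 - \log x \geqslant 0$ (the map $x\mapsto x-1-\log x$ is convex with its unique minimum at $x=1$, where it vanishes), so that $\phi(\sigma^2)\geqslant 0$; since $D_n(\eta)\geqslant 0$ and $\sigma^2>0$, this gives $b_n(\theta)\geqslant 0$. The same argument yields $\phi(\sigma^2)\geqslant 0$ in \eqref{eq.sylwesterbnandb2}.

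For the equality case in \eqref{eq.sylwesterbnandb2} I would note that $b(\theta)=0$ forces both nonnegative summands to vanish: $\phi(\sigma^2)=0$ gives $\sigma^2 = \sigma_0^2$ (the inequality above being strict off $x=1$), while $D(\eta)=0$ combined with Assumption (A3) ($f>0$ on $\interoo{\underline u}{\overline u}$) and the continuity of $t\mapsto\mu(\eta_0,t)-\mu(\eta,t)$ forces $\mu(\eta_0,t)=\mu(\eta,t)$ for every $t\in\interoo{\underline u}{\overline u}$. The crux is then the identifiability of the model: assuming without loss of generality $u<u_0$, on the nonempty subinterval $\interoo{u}{u_0}$ one has $\mu(\eta,t)=0$ while $\mu(\eta_0,t)=\gamma_0(t-u_0)\neq 0$ (because $\gamma_0\neq 0$ by Assumption (A2)), a contradiction; hence $u=u_0$, and then $\gamma(t-u_0)=\gamma_0(t-u_0)$ for $t<u_0$ yields $\gamma=\gamma_0$. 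Together with $\sigma^2=\sigma_0^2$ this gives $\theta=\theta_0$, the converse being immediate. This identification step, where the piecewise structure and the interiority of $u_0$ (guaranteed by Assumption (A2)) are essential, is what I regard as the main point of the proof, even though it is short.

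For the uniform convergence \eqref{eq.sylwesterbnandb3}, the $n$-independent term $\phi(\sigma^2)$ is continuous in $\sigma^2$, so it suffices to control $\sigma^{-2}D_n(\eta)$ uniformly in $n$ as $\theta'\to\theta$. Factorising the difference of squares, each summand of $D_n(\eta')-D_n(\eta)$ equals $[\mu(\eta,t_i)-\mu(\eta',t_i)]\,[2\mu(\eta_0,t_i)-\mu(\eta,t_i)-\mu(\eta',t_i)]$; by Lemma \ref{lemma.convergenceuniformemu} the first factor is at most $C\|\eta-\eta'\|$ uniformly in $t_i$, and the second factor is uniformly bounded for $\eta,\eta'$ ranging over a bounded neighbourhood of $\eta$ (since $|\mu(\eta,t)|\leqslant|\gamma|\,|\overline u-\underline u|$). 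Averaging over $i$ then gives $|D_n(\eta')-D_n(\eta)|\leqslant C'\|\eta-\eta'\|$ with $C'$ independent of $n$; combining this with the continuity of $\sigma^2\mapsto\sigma^{-2}$ and the uniform boundedness of $D_n$ yields the claim, the key being that Lemma \ref{lemma.convergenceuniformemu} furnishes a bound uniform in $t$ and hence in $n$.

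Finally, \eqref{eq.sylwesterbnandb4} is immediate: the term $\phi(\sigma^2)$ is identical in $b_n$ and $b$, and $D_n(\eta)\to D(\eta)$ is precisely Remark 2 following Assumption (A1), applied to the continuous bounded function $t\mapsto[\mu(\eta_0,t)-\mu(\eta,t)]^2$ on $\interff{\underline u}{\overline u}$.
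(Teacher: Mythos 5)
Your proposal is correct and follows essentially the same route as the paper's proof: the same split of $b_n$ and $b$ into the $\sigma^2$-term plus the quadratic mean term, the elementary inequality $x-1-\log x\geqslant 0$, the difference-of-squares factorisation combined with Lemma \ref{lemma.convergenceuniformemu} for the uniformity in $n$, and Assumption (A1) for the limit \eqref{eq.sylwesterbnandb4}. The only difference is that you spell out the identifiability step ($\mu(\eta,\cdot)\equiv\mu(\eta_0,\cdot)$ on $\interoo{\underline{u}}{\overline{u}}$ forces $\eta=\eta_0$ via the piecewise structure), which the paper leaves implicit; this is a welcome elaboration rather than a different argument.
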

\begin{proof}[Proof of Lemma \ref{lemma.sylwesterbnandb}]We will prove each claim separately.

\vspace{0.25em}\par\noindent\textsc{Proof of \eqref{eq.sylwesterbnandb1}.~} That \(b_n(\theta)\geqslant 0\) is trivial since the first term in \eqref{eq.definitionsylwesterbn} is non negative (having \(x-1-\log x \geqslant 0\) with equality only if \(x=1\)), and the second term in \eqref{eq.definitionsylwesterbn} is obviously non negative too.

\vspace{0.25em}\par\noindent\textsc{Proof of \eqref{eq.sylwesterbnandb2}.~} That \(b(\theta)\geqslant 0\) is again easy enough to prove, both terms in \eqref{eq.definitionsylwesterb} being trivially non negative. If \(\theta\neq\theta_0\) then either \(\sigma^2\neq\sigma_0^2\) which implies the first term is positive, or \(\mu(\eta_0, \cdot) \neq \mu(\eta, \cdot)\) which implies the second term is positive (since \(f\) is assumed positive on \(\interff{\underline{u}}{\overline{u}}\)). Hence if \(\theta\neq\theta_0\) then \(b(\theta) > 0\). That \(\theta=\theta_0\) implies \(b(\theta) = 0\) is of course straightforward.

\vspace{0.25em}\par\noindent\textsc{Proof of \eqref{eq.sylwesterbnandb3}.~} We first observe that
\begin{align}
&~\left|\frac{1}{n}\sum_{i=1}^{n}(\mu(\eta_0, t_i) - \mu(\eta^\prime, t_i))^2 - \frac{1}{n}\sum_{i=1}^{n}(\mu(\eta_0, t_i) - \mu(\eta, t_i))^2\right| \nonumber \\
&= \left|\frac{1}{n}\sum_{i=1}^{n}\left[2\mu(\eta_0, t_i) - \mu(\eta^\prime, t_i) - \mu(\eta, t_i)\right]\cdot\left[\mu(\eta, t_i) - \mu(\eta^\prime, t_i)\right] \right| \nonumber \\
&\leqslant \frac{1}{n}\sum_{i=1}^{n}\left|2\mu(\eta_0, t_i) - \mu(\eta^\prime, t_i) - \mu(\eta, t_i)\right|\cdot\left|\mu(\eta, t_i) - \mu(\eta^\prime, t_i)\right| \nonumber \\
&\leqslant \left(\sup_{t\in\interff{\underline{u}}{\overline{u}}}\left|\mu(\eta_0, t) - \mu(\eta, t)\right| + \sup_{t\in\interff{\underline{u}}{\overline{u}}} \left|\mu(\eta^\prime, t) - \mu(\eta, t)\right|\right) \cdot \sup_{t\in\interff{\underline{u}}{\overline{u}}} \left|\mu(\eta^\prime, t) - \mu(\eta, t)\right|. \label{ineq.bn}
\end{align}
As \(\theta^\prime\conv\theta\), the convergence of the first term of \(b_n\) to the first term of \(b\) is obviously uniform in \(n\) since this part of \(b_n\) does not involve \(n\) at all. As \(\theta^\prime\conv\theta\), via Lemma \ref{lemma.convergenceuniformemu}, we also obtain
\begin{align*}
\sup_{t\in\interff{\underline{u}}{\overline{u}}} \left|\mu(\eta^\prime, t) - \mu(\eta, t)\right|\conv 0,
\end{align*}
which ensures that the second part of \eqref{eq.definitionsylwesterbn} converges uniformly in \(n\) thanks to \eqref{ineq.bn}.

\vspace{0.25em}\par\noindent\textsc{Proof of \eqref{eq.sylwesterbnandb4}.~} Thanks to Assumption (A1), it is easy to see that
\begin{align*}
\frac{1}{n}\sum_{i=1}^{n}\left[\mu(\eta_0, t_i) - \mu(\eta, t_i)\right]^2
&= \int_{\underline{u}}^{\overline{u}}\left[\mu(\eta_0, t) - \mu(\eta, t)\right]^2 \ud F_n(t) \\
&\conv \int_{\underline{u}}^{\overline{u}}\left[\mu(\eta_0, t) - \mu(\eta, t)\right]^2 \ud F(t)
= \int_{\underline{u}}^{\overline{u}}\left[\mu(\eta_0, t) - \mu(\eta, t)\right]^2 f(t)\ud t.
\end{align*}
\end{proof}

\begin{lemma}\label{lemma.sylwestertheo35}
Let \(A\subset\R\times\interff{\underline{u}}{\overline{u}}\) be a bounded set, and let \(\eta_0\in A\), then under Assumptions (A1)--(A4),
\begin{align*}
\sup_{\eta\in A} \left|\frac{1}{n}\sum_{i=1}^n [\mu(\eta_0, t_i) - \mu(\eta, t_i)] \xi_i\right| &\convps 0.
\end{align*}
\end{lemma}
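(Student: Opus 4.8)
The plan is to reduce the uniform control over the whole set $A$ to the control of only finitely many weighted sums, using the grid approximation provided by Lemma \ref{lemma.grid}, and then to dominate the resulting approximation error by a single, $\eta$-independent quantity that converges by the law of large numbers.

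First I would fix $\epsilon>0$ and invoke Lemma \ref{lemma.grid} with the second argument frozen at $\eta_0\in A$: there is a finite family $\eta_1,\dots,\eta_{m(\epsilon)}\in A$ such that for every $\eta\in A$ one can select indices $j,j'$ with $\sup_{t\in\interff{\underline{u}}{\overline{u}}}\bigl|[\mu(\eta_0,t)-\mu(\eta,t)]-[\mu(\eta_{j'},t)-\mu(\eta_j,t)]\bigr|<\epsilon$. Writing $w_i^{(\eta)}=\mu(\eta_0,t_i)-\mu(\eta,t_i)$ and splitting off the grid difference, so that the residual weights are bounded by $\epsilon$ in absolute value, this yields the key inequality, valid for every $n$,
\begin{align*}
\sup_{\eta\in A}\left|\frac1n\sum_{i=1}^n w_i^{(\eta)}\xi_i\right|
\leqslant \max_{1\leqslant j,j'\leqslant m(\epsilon)}\left|\frac1n\sum_{i=1}^n [\mu(\eta_{j'},t_i)-\mu(\eta_j,t_i)]\xi_i\right|
+ \epsilon\cdot\frac1n\sum_{i=1}^n|\xi_i|.
\end{align*}

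For the maximum term I would fix one grid pair $(\eta_j,\eta_{j'})$ and observe that the weights $c_i:=\mu(\eta_{j'},t_i)-\mu(\eta_j,t_i)$ are deterministic and uniformly bounded, say $|c_i|\leqslant M$, since $A$ is bounded and $t_i\in\interff{\underline{u}}{\overline{u}}$ (this boundedness follows at once from Lemma \ref{lemma.convergenceuniformemu}). Then $c_i\xi_i$ are independent, centred, with $\var(c_i\xi_i)\leqslant M^2\sigma_0^2$, so that $\sum_{i\geqslant1}\var(c_i\xi_i)/i^2<\infty$, and Kolmogorov's strong law for independent summands gives $\frac1n\sum_{i=1}^n c_i\xi_i\convps 0$. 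As there are only finitely many grid pairs, the maximum over them also tends to $0$ almost surely. Meanwhile the classical SLLN yields $\frac1n\sum_{i=1}^n|\xi_i|\convps\esp|\xi_1|<+\infty$. Taking $\limsup_n$ in the displayed inequality therefore gives, almost surely, $\limsup_n\sup_{\eta\in A}\bigl|\frac1n\sum_{i=1}^n w_i^{(\eta)}\xi_i\bigr|\leqslant \epsilon\,\esp|\xi_1|$; intersecting the underlying probability-one events over a countable sequence $\epsilon=1/k$ and letting $k\to+\infty$ finishes the proof.

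The only genuinely delicate point is obtaining the almost sure (rather than merely in-probability) convergence of each fixed weighted sum: its variance is only of order $n^{-1}$, so a bare second-moment estimate does not suffice, and it is precisely Kolmogorov's criterion $\sum_{i}\var(c_i\xi_i)/i^2<\infty$ that upgrades $L^2$-convergence to the almost sure statement. Everything else is bookkeeping: the grid reduction from Lemma \ref{lemma.grid} makes the number of sums to control finite, and the remainder is handled uniformly in $\eta$ because the grid error $\epsilon$ factors out and multiplies the single average $\frac1n\sum_{i=1}^n|\xi_i|$, which does not depend on $\eta$.
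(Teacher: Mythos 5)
Your proposal is correct and follows essentially the same route as the paper: reduction to finitely many weighted sums via the grid of Lemma \ref{lemma.grid}, almost sure convergence of each fixed weighted sum by Kolmogorov's criterion with $b_i=i$, the SLLN for $\frac1n\sum_{i=1}^n|\xi_i|$ to absorb the $\epsilon$-error, and a countable intersection over $\epsilon=1/k$. The only cosmetic difference is that you carry absolute values through the triangle inequality from the start, whereas the paper bounds the one-sided supremum and then repeats the argument for $(-\xi_n)_{n\in\N}$; both handle the same point.
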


\begin{proof}[Proof of Lemma \ref{lemma.sylwestertheo35}]Let \(\epsilon >0\), \(\eta\in A\), and apply Lemma \ref{lemma.grid} to get the corresponding \(m(\epsilon)\in\N\), \(\{\eta_1, \ldots, \eta_{m(\epsilon)}\} \subset A\), \(j,j^\prime\in\{1,\ldots,m(\epsilon)\}\).  We can write with the triangle inequality
\begin{align*}
\frac{1}{n}\sum_{i=1}^n [\mu(\eta_0, t_i) - \mu(\eta, t_i)] \xi_i &= \frac{1}{n }\sum_{i=1}^n [\mu(\eta_j, t_i) - \mu(\eta_{j^\prime}, t_i)] \xi_i \\
&\qquad +\frac{1}{n}\sum_{i=1}^n \left\{[\mu(\eta_0, t_i) - \mu(\eta, t_i)] - [\mu(\eta_j, t_i) - \mu(\eta_{j^\prime}, t_i)]\right\} \xi_i \\
&\leqslant \sup_{(j,j^\prime)\in\{1,\ldots,m(\epsilon)\}} \left\{\frac{1}{n}\sum_{i=1}^n [\mu(\eta_j, t_i) - \mu(\eta_{j^\prime}, t_i)] \xi_i\right\} + \epsilon \cdot \frac{1}{n}\sum_{i=1}^n \left|\xi_i\right|.
\end{align*}
Hence
\begin{align}
\sup_{\eta\in A} \left\{\frac{1}{n}\sum_{i=1}^n [\mu(\eta_0, t_i) - \mu(\eta, t_i)] \xi_i\right\}  &\leqslant \sup_{(j,j^\prime)\in\{1,\ldots,m(\epsilon)\}} \left\{\frac{1}{n}\sum_{i=1}^n [\mu(\eta_j, t_i) - \mu(\eta_{j^\prime}, t_i)] \xi_i\right\} \nonumber \\
&\qquad + \epsilon \cdot \frac{1}{n}\sum_{i=1}^n \left|\xi_i\right|.\label{ineq.sylwestertheo35}
\end{align}
Let us now recall Kolmogorov's criterion, a proof of which is available in Section 17 of \cite{Loeve} on pages 250--251. This criterion guarantees that for any sequence \((Y_i)_{i\in\N}\) of independent random variables and any numerical sequence \((b_i)_{i\in\N}\) such that
\begin{align*}
\sum_{i=1}^{+\infty} \dfrac{\var Y_i}{b_i^2} < +\infty, \; b_n &\conv +\infty,
\end{align*}
we have
\begin{align*}
\frac{\sum_{i=1}^n (Y_i - \esp Y_i)}{b_n} &\convps 0.
\end{align*}
For each couple \((j,j^\prime)\in\{1,\ldots,m(\epsilon)\}\), Kolmogorov's criterion ensures that
\begin{align*}
\frac{1}{n}\sum_{i=1}^n [\mu(\eta_j, t_i) - \mu(\eta_{j^\prime}, t_i)] \xi_i &\convps 0,
\end{align*}
for the coefficients \([\mu(\eta_j, t_i) - \mu(\eta_{j^\prime}, t_i)]\) are obviously bounded, and it suffices to pick \(Y_i = [\mu(\eta_j, t_i) - \mu(\eta_{j^\prime}, t_i)] \xi_i\) and \(b_i=i\). Having only a finite number of couples \((j,j^\prime)\in\{1,\ldots,m(\epsilon)\}^2\) to consider allows us to write
\begin{align}
\sup_{(j,j^\prime)\in\{1,\ldots,m(\epsilon)\}} \frac{1}{n}\sum_{i=1}^n [\mu(\eta_j, t_i) - \mu(\eta_{j^\prime}, t_i)] \xi_i &\convps 0.\label{eq.sylwestertheo35stronglawsup}
\end{align}
By \eqref{eq.sylwestertheo35stronglawsup}, the first term on the right hand side of \eqref{ineq.sylwestertheo35} converges almost surely to zero. The Strong Law of Large Numbers ensures that the second term on the right hand side of \eqref{ineq.sylwestertheo35} converges almost surely to \(\epsilon\cdot(2\pi^{-1}\sigma^2)^{\frac{1}{2}}\), and the result follows, since all the work done above for \((\xi_n)_{n\in\N}\) can be done again for \((-\xi_n)_{n\in\N}\).
\end{proof}

\begin{lemma}\label{lemma.suiteiidOlogn}
Let \((Z_i)_{i\in\N}\) be a sequence of independent identically distributed random variables such that for all \(i\in\N\), either \(Z_i\sim\loi{N}(0, \sigma^2)\) with \(\sigma^2 > 0\), or \(Z_i\sim\chi^2(k)\) with \(k > 0\). Then a.s., as \(n\conv+\infty\)
\begin{align*}
Z_n = \go(\log n).
\end{align*}
\end{lemma}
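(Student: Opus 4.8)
The plan is to exploit the fact that both admissible distributions have sub-exponential tails and then invoke the Borel--Cantelli lemma. Recall that $Z_n = \go(\log n)$ a.s. means precisely that, almost surely, there exists a finite constant $C$ such that $|Z_n|\leqslant C\log n$ for all $n$ large enough. It therefore suffices to produce a single deterministic constant $C$ for which the events $\{|Z_n| > C\log n\}$ occur only finitely often with probability one.

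First I would establish a uniform tail bound. In both cases the moment generating function $\esp[e^{\lambda Z_i}]$ is finite for $\lambda$ in a neighbourhood of $0$: for $Z_i\sim\loi{N}(0,\sigma^2)$ one has $\esp[e^{\lambda Z_i}] = e^{\lambda^2\sigma^2/2}$ for all $\lambda\in\R$, while for $Z_i\sim\chi^2(k)$ one has $\esp[e^{\lambda Z_i}] = (1-2\lambda)^{-k/2}$ for $\lambda<\frac12$. A Chernoff bound then yields constants $a>0$ and $A<+\infty$, depending only on the law of $Z_i$, such that
\begin{align*}
\proba\left(|Z_i| > t\right) &\leqslant A e^{-a t}, \qquad \text{for all } t\geqslant 0.
\end{align*}
In the Gaussian case the genuine tail decays like $e^{-t^2/2\sigma^2}$, which is dominated by $A e^{-at}$ after adjusting the constants; in the chi-square case, choosing $\lambda=\frac14$ gives $a=\frac14$ and $A=2^{k/2}$, using that $Z_i\geqslant 0$ so that $|Z_i|=Z_i$.

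Next I would apply this bound at the threshold $t = C\log n$. Since the $Z_i$ are identically distributed,
\begin{align*}
\sum_{n=1}^{+\infty}\proba\left(|Z_n| > C\log n\right) &\leqslant A\sum_{n=1}^{+\infty} e^{-aC\log n} = A\sum_{n=1}^{+\infty} n^{-aC},
\end{align*}
which is finite as soon as $C$ is chosen large enough that $aC>1$. Fixing any such $C$, the first Borel--Cantelli lemma guarantees that, almost surely, only finitely many of the events $\{|Z_n| > C\log n\}$ occur. Equivalently, almost surely $|Z_n|\leqslant C\log n$ for all $n$ large enough, which is exactly the statement $Z_n = \go(\log n)$.

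There is no serious obstacle here: the argument is the standard Borel--Cantelli route, and the only point requiring a little care is the derivation of the common exponential tail bound, where one must treat the two admissible families separately before unifying them under a single pair $(a,A)$. Note that independence of the $Z_i$ is not even needed for this direction, since the first Borel--Cantelli lemma applies regardless; it is the identical distribution that lets the summability check reduce to a single tail function.
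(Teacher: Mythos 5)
Your proof is correct and follows essentially the same route as the paper: both arguments rest on finiteness of an exponential moment, an application of Markov's inequality (your Chernoff bound at threshold $C\log n$ versus the paper's bound on $\proba(e^{2Y_n}>\epsilon^2 n^2)$ after rescaling $Z_n$), and the first Borel--Cantelli lemma. The only cosmetic differences are that the paper fixes the exponent and deduces $e^{Y_n}=\po(n)$ before taking logarithms, while you keep the constant $C$ free and impose $aC>1$; both handle the two admissible laws by the same separate-then-unify treatment of the exponential moment.
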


\begin{proof}[Proof of Lemma \ref{lemma.suiteiidOlogn}]
Denote \(Y_n = Z_n\) when the random variables are Gaussian, and \(Y_n = Z_n/5\) when the random variables considered are chi-squared (so that \(\esp e^{2Y_1}\) and \(\esp e^{-2Y_1}\) are both finite). We will show that a.s. \(Y_n = \go(\log n)\).

For any \(\epsilon > 0\), from Markov's inequality we get:
\begin{align*}
\proba\left(n^{-1}|e^{Y_n}| > \epsilon\right) = \proba\left(n^{-2}e^{2Y_n} > \epsilon^2\right) \leqslant \epsilon^{-2} n^{-2} \esp e^{2Y_1}.
\end{align*}
From there it is easy to see that for any \(\epsilon > 0\) we have
\begin{align*}
\sum_{n=1}^{+\infty} \proba\left(n^{-1}|e^{Y_n}| > \epsilon\right) = \epsilon^{-2}\frac{\pi^2}{6} \esp e^{2Y_1}< \infty,
\end{align*}
which directly implies via Borel-Cantelli's Lemma \cite[see for example][Section 4, page 59]{BillingsleyPM} that a.s.
\begin{align*}
e^{Y_n} &= \po(n).
\end{align*}
In particular, a.s. for any \(n\) large enough,
\begin{align*}
Y_n &\leqslant \log n.
\end{align*}
What was done with \((Y_n)_{n\in\N}\) can be done again with \((-Y_n)_{n\in\N}\) so that in the end we have a.s for any \(n\) large enough,
\begin{align*}
-\log n \leqslant Y_n &\leqslant \log n.
\end{align*}
\end{proof}

\begin{lemma}\label{lemma.majorationetamoinseta0}
Under Assumptions (A1)--(A4), for any \(\eta_0\in\R\times\interff{\underline{u}}{\overline{u}}\), there exists \(C\in \R_+^*\) such that for any \(n\) large enough, and for any \(\eta\)
\begin{align*}
n^{-1}\sum_{i=1}^n \left[\mu(\eta_0, t_i)-\mu(\eta,t_i)\right]^2 &\geqslant C\|\eta-\eta_0\|^2.
\end{align*}
\end{lemma}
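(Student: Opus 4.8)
The plan is to read the inequality as a quantitative identifiability statement and to exploit the affine structure of $\mu(\eta_0,\cdot)-\mu(\eta,\cdot)$. I would work with the reparametrisation $\beta=-\gamma u$, $\beta_0=-\gamma_0 u_0$, so that on the overlap region $\{t\leqslant\min(u,u_0)\}$ both indicators are active and
\[
\mu(\eta_0,t)-\mu(\eta,t)=(\beta_0-\beta)+(\gamma_0-\gamma)t
\]
is an \emph{affine} function of $t$, whereas on the band strictly between the two break-points exactly one indicator survives: for $t\in(\min(u,u_0),\max(u,u_0))$ the difference equals $\gamma_0(t-u_0)\one_{\{t\leqslant u_0\}}$ (resp. $-\gamma(t-u)\one_{\{t\leqslant u\}}$), i.e. it does not involve $\eta$ (resp. $\eta_0$) at all. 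These two regions carry complementary information about the parameters and will be lower-bounded separately.

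First I would fix, once and for all (they depend only on $\eta_0$ through $u_0$), two disjoint closed subintervals $V_1,V_2\subset\interoo{\underline{u}}{u_0}$ with non-overlapping closures. By Assumption (A1), Remark 2 and Assumption (A3), for $n$ large the empirical frequency $\frac1n\sum_i\one_{V_j}(t_i)$ exceeds $\tfrac12\int_{V_j}f>0$, so each $V_j$ contains observations. For every $\eta$ with $u\geqslant\sup V_2$ both indicators are active throughout $V_1\cup V_2$, and I would reproduce the invertible-matrix argument of Theorem \ref{theo.strongconsistencywithrate}: choosing observed points $t_1\in V_1$, $t_2\in V_2$ and using $M(t_1,t_2)^{-1}$, the two affine values $(\beta_0-\beta)+(\gamma_0-\gamma)t_j$ control $\|(\beta-\beta_0,\gamma-\gamma_0)\|$, whence $\frac1n\sum_i[\mu(\eta_0,t_i)-\mu(\eta,t_i)]^2\geqslant c_1\big[(\gamma-\gamma_0)^2+(\beta-\beta_0)^2\big]$ with $c_1>0$ depending only on $V_1,V_2,f$. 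Converting back to the $(\gamma,u)$ norm is then routine through the identity $\beta-\beta_0=-\gamma(u-u_0)-(\gamma-\gamma_0)u_0$.

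For the complementary configurations, where $u<\sup V_2$, the overlap region $[\underline{u},\min(u,u_0)]$ may be short and I would instead lean on the rigid band. When $u<u_0$ the difference on $(u,u_0)$ equals the fixed profile $\gamma_0(t-u_0)$, so for $n$ large (again via Remark 2 and Assumption (A3)) the sum is bounded below by a constant multiple of $\int_{u}^{u_0}\gamma_0^2(t-u_0)^2f(t)\,\ud t$; this, combined with the affine contribution coming from $[\underline{u},\min(u,u_0)]$ controlled as above, is the quantity I would try to pit against $(\gamma-\gamma_0)^2+(u-u_0)^2$. All the uniform lower bounds on the empirical frequencies used along the way follow from the uniform convergence of $F_n$ to $F$ (Remark 1, i.e. Theorem \ref{theo.Polya}) together with the positivity of $f$, exactly as in Remark 2.

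The main obstacle is producing a \emph{single} constant $C$ that survives over all of $\eta\in\R\times\interff{\underline{u}}{\overline{u}}$, and in particular over unbounded $\gamma$. The delicate regime is $u\to\underline{u}$ with $|\gamma|$ large: there the overlap integral $\int_{\underline{u}}^{\min(u,u_0)}(t-u)^2f$ carries vanishing mass, so the coefficient $c_1$ furnished by the invertible-matrix step deteriorates and the affine part of the difference stays bounded even as $\gamma-\gamma_0$ grows, while the band contribution $\int_{u}^{u_0}(t-u_0)^2f$ is $\eta$-independent. The crux of the argument is therefore to control the joint behaviour of these two integrals and to verify that their combination still dominates $(\gamma-\gamma_0)^2+(u-u_0)^2$ uniformly; this boundary regime, in which the support of $\mu(\eta,\cdot)$ collapses while $\|\eta-\eta_0\|$ is large, is exactly where the uniform constant must be secured and is the hardest part of the proof.
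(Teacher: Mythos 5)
Your first half reproduces the paper's proof almost verbatim: the paper likewise fixes two disjoint intervals \(V_1,V_2\subset\interoo{\underline{u}}{u_0}\) with non-overlapping closures, reruns the invertible-matrix argument of \eqref{eq.dominetaneta0} to obtain \(n^{-1}\sum_{i=1}^n[\mu(\eta_0,t_i)-\mu(\eta,t_i)]^2\geqslant C|\gamma-\gamma_0|^2\) and \(\geqslant C|\beta-\beta_0|^2\) once the empirical frequencies \(n^{-1}\sum_{i=1}^n\one_{V_j}(t_i)\) are bounded below by their positive limits \(\int_{V_j}f(t)\ud t\), and then converts the \(\beta\)-bound into a \(u\)-bound through \(|u-u_0|\leqslant|\gamma_0^{-1}|(1+\max(|\underline{u}|,|\overline{u}|))\max(|\beta-\beta_0|,|\gamma-\gamma_0|)\), which is uniform because \(\gamma_0\neq 0\) and \(u\) is confined to \(\interff{\underline{u}}{\overline{u}}\). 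Up to there your write-up is sound. The genuine gap is that your proposal stops exactly at what you yourself call the crux: you never produce the uniform constant in the regime where \(u\) approaches \(\underline{u}\) while \(|\gamma|\) is large, and the programme you sketch for that regime cannot be completed. The band contribution \(\int_{u}^{u_0}\gamma_0^2(t-u_0)^2f(t)\ud t\) is independent of \(\eta\) and bounded by a fixed constant, so it cannot dominate the unbounded target \(C(\gamma-\gamma_0)^2\); worse, at \(u=\underline{u}\) one has \(\one_{\interfo{t_i}{+\infty}}(u)(t_i-u)=0\) for every observation, hence \(\mu(\eta,t_i)=0\) for all \(i\), the left-hand side is independent of \(\gamma\) altogether, and the claimed inequality fails for \(|\gamma|\) large no matter how the overlap and band integrals are combined. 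A uniform constant over all of \(\R\times\interff{\underline{u}}{\overline{u}}\) therefore does not exist; the bound holds uniformly only over bounded sets of \(\eta\), or over configurations where both indicators are active on \(V_1\cup V_2\) (i.e. \(u\geqslant\sup V_2\)), which is precisely where your matrix step lives.

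For comparison, the paper's own proof contains none of your case analysis: it applies the \(V_1,V_2\) matrix argument ``for any \(\eta\)'', which tacitly presupposes \(t_1,t_2\leqslant u\) so that \(\mu(\eta,t_j)=\beta+\gamma t_j\) at the chosen points — exactly the case you isolate. Your diagnosis of the boundary regime is thus sharper than the paper's treatment, but diagnosing an obstacle is not the same as overcoming it: to turn your sketch into a proof you must restrict \(\eta\) (for instance to a bounded set, as the paper does in its other uniform statements, Lemmas \ref{lemma.grid} and \ref{lemma.sylwestertheo35}, or to the region \(u\geqslant\sup V_2\) where the affine representation is valid), rather than hope that the \(\eta\)-independent band term rescues the unbounded-\(\gamma\) regime. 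As submitted, the proposal proves the lemma only on the restricted region and leaves the remaining (in fact irreparable, as stated) case open.
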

\begin{proof}[Proof of Lemma \ref{lemma.majorationetamoinseta0}]We have already almost proved this result in \eqref{eq.dominetaneta0} (see Theorem \ref{theo.strongconsistencywithrate}). There is however a small difficulty since the majoration was obtained for \(\tau = (\beta,\gamma)\) and not \(\eta = (\gamma, u)\).

Let \(V_1\) and \(V_2\) two non empty open intervals of \(\interoo{\underline{u}}{u_0}\) such that their closures \(\overline{V_1}\) and \(\overline{V_2}\) are do not overlap. We have
\begin{align*}
n^{-1}\sum_{i=1}^n \left[\mu(\eta_0, t_i)-\mu(\eta,t_i)\right]^2 \geqslant n^{-1} &\left( \sum_{i=1}^n \left[\mu(\eta_0, t_i)-\mu(\eta,t_i)\right]^2\one_{V_1}(t_i) +\right.\\
&\qquad \left. \sum_{i=1}^n \left[\mu(\eta_0, t_i)-\mu(\eta,t_i)\right]^2\one_{V_2}(t_i) \right).
\end{align*}
Using the same arguments we used to prove \eqref{eq.dominetaneta0}, we find that there exists \(C\in\R_+^*\) such that (remembering the definition of the intercept \(\beta\) of the model)
\begin{align*}
n^{-1}\sum_{i=1}^n \left[\mu(\eta_0, t_i)-\mu(\eta,t_i)\right]^2 &\geqslant \min\left(n^{-1}\sum_{i=1}^n \one_{V_1}(t_i), n^{-1}\sum_{i=1}^n \one_{V_2}(t_i)\right) \cdot C |\gamma-\gamma_0|^2, \\
n^{-1}\sum_{i=1}^n \left[\mu(\eta_0, t_i)-\mu(\eta,t_i)\right]^2 &\geqslant \min\left(n^{-1}\sum_{i=1}^n \one_{V_1}(t_i), n^{-1}\sum_{i=1}^n \one_{V_2}(t_i)\right) \cdot C |\beta-\beta_0|^2,
\end{align*}
and since for \(j=1,2\) we have
\begin{align*}
n^{-1}\sum_{i=1}^n \one_{V_j}(t_i) &\conv \int_{V_j} f(t)\ud t > 0,
\end{align*}
there exists \(C\in\R_+^*\) such that for any \(n\) large enough
\begin{align*}
n^{-1}\sum_{i=1}^n \left[\mu(\eta_0, t_i)-\mu(\eta,t_i)\right]^2 &\geqslant C |\gamma-\gamma_0|^2, \\
n^{-1}\sum_{i=1}^n \left[\mu(\eta_0, t_i)-\mu(\eta,t_i)\right]^2 &\geqslant C |\beta-\beta_0|^2.
\end{align*}
Notice now that
\begin{align*}
|u-u_0| &= |\gamma_0^{-1}\beta_0 - \gamma^{-1}\beta| \\
&= |\gamma_0^{-1}| |\beta_0 - \gamma_0\gamma^{-1}\beta| \\
&\leqslant |\gamma_0^{-1}| \left\{|\beta_0 - \beta| + |\beta - \gamma_0\gamma^{-1}\beta|\right\} \\
&\leqslant |\gamma_0^{-1}| \left\{|\beta_0 - \beta| + |\gamma^{-1}\beta|\, |\gamma - \gamma_0|\right\}  \\
&\leqslant |\gamma_0^{-1}| (|\beta_0 - \beta| + |u|\,|\gamma - \gamma_0|) \\
&\leqslant |\gamma_0^{-1}| (1 + \max(|\underline{u}|,|\overline{u}|)) \cdot \max(|\beta_0 - \beta|, |\gamma - \gamma_0| ).
\end{align*}
From here, since \(u\in\interff{\underline{u}}{\overline{u}}\) is bounded, it is straightforward that there exists \(C\in\R_+^*\) such that for any \(n\) large enough
\begin{align*}
n^{-1}\sum_{i=1}^n \left[\mu(\eta_0, t_i)-\mu(\eta,t_i)\right]^2 &\geqslant C |\gamma-\gamma_0|^2, \\
n^{-1}\sum_{i=1}^n \left[\mu(\eta_0, t_i)-\mu(\eta,t_i)\right]^2 &\geqslant C |u-u_0|^2,
\end{align*}
which ends the proof.
\end{proof}

\begin{lemma}\label{lemma.An}
Recall the definition of \(A_{1:n}^*\) given in \eqref{eq.definitionAnstar}. Under Assumptions (A1)--(A4) and conditions \eqref{eq.definitiondn}, as \(n\conv+\infty\)
\begin{align}\label{eq.lemma.An1}
n^{-\frac{1}{2}} A_{1:n}^*(\theta_0) \convloi \loi{N}\left(0, I(\theta_0) \right).
\end{align}
\end{lemma}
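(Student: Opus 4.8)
The plan is to write $A_{1:n}^*(\theta_0)$ explicitly, to recognise each of its coordinates as a sum of independent centred random variables, and then to establish the limit law by the Cram\'er--Wold device together with a triangular-array central limit theorem.

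First I would replace $X_i$ by $\mu(\eta_0,t_i)+\xi_i$ in \eqref{eq.defloglikelihoodi} and differentiate, summing only over the $n^*$ retained observations. Writing $\one_i := \one_{\interfo{t_i}{+\infty}}(u_0)$ this gives
\begin{align*}
A_{1:n}^*(\theta_0) = \left(\frac{1}{\sigma_0^2}\sum_{i=1}^{n^*}\xi_i(t_i-u_0)\one_i,\; -\frac{\gamma_0}{\sigma_0^2}\sum_{i=1}^{n^*}\xi_i\one_i,\; \frac{1}{2\sigma_0^4}\sum_{i=1}^{n^*}(\xi_i^2-\sigma_0^2)\right).
\end{align*}
Each coordinate is a sum of independent, mean-zero random variables (the $\xi_i$ being i.i.d.\ centred Gaussian and the $t_i$ deterministic), and the summands depend on $n$ only through the set of deleted indices $\{i:t_i\in D_n\}$; we are thus dealing with a triangular array.

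By Cram\'er--Wold it suffices to show, for every fixed $\lambda\in\R^3$, that $n^{-\frac{1}{2}}\lambda^\prime A_{1:n}^*(\theta_0)=n^{-\frac{1}{2}}\sum_{i=1}^{n^*}W_{ni}\convloi\loi{N}(0,\lambda^\prime I(\theta_0)\lambda)$, where $W_{ni}$ denotes the corresponding linear combination of the three summands above. The $W_{ni}$ are independent and centred, and since $t_i\in\interff{\underline{u}}{\overline{u}}$ is bounded and $\xi_i$ has finite moments of every order, $\esp|W_{ni}|^4$ is bounded uniformly in $i$ and $n$. I would then apply the Lyapunov central limit theorem for triangular arrays, for which two points must be checked: (i) the normalised variance converges, $\frac{1}{n}\var\bigl(\sum_{i=1}^{n^*}W_{ni}\bigr)\convn\lambda^\prime I(\theta_0)\lambda$, and (ii) Lyapunov's moment condition holds. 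Point (ii) is routine: with exponent $2+\delta=4$ the numerator $\sum_i\esp|W_{ni}|^4=\go(n)$ while the denominator $\bigl(\sum_i\var W_{ni}\bigr)^2=\go(n^2)$ (the variance being asymptotically $n\,\lambda^\prime I(\theta_0)\lambda>0$ for $\lambda\neq0$, as $I(\theta_0)$ is positive definite), so the Lyapunov ratio is $\go(n^{-1})\convn0$.

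The substance lies in (i). On expanding the variance, the covariances between the $\sigma^2$-coordinate and the $(\gamma,u)$-coordinates vanish because $\esp[\xi_i^3]=0$, which reproduces the block structure of \eqref{eq.lemmaasympFishInfMatrix}; the surviving entries reduce to the deterministic averages $\frac{1}{n}\sum_{i=1}^{n^*}(t_i-u_0)^k\one_i$ for $k=0,1,2$ together with $\frac{n^*}{n}\cdot\frac{1}{2\sigma_0^4}$ (using $\var(\xi_i^2)=2\sigma_0^4$). The main obstacle, and the only place conditions \eqref{eq.definitiondn} are used, is to confirm that the deletion of the observations in $D_n$ does not alter these limits: the difference between each retained average and its full counterpart is bounded by $\go(n^{**}/n)$, which tends to $0$ because $n^{**}/n\convn0$. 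The full averages converge, by Remark 2 of Assumption (A1), to $\int_{\underline{u}}^{u_0}(t-u_0)^kf(t)\ud t$ and (for the variance term) to $\frac{1}{2\sigma_0^4}$ since $n^*/n\convn1$; matching these against \eqref{eq.lemmaasympFishInfMatrix} yields exactly $\lambda^\prime I(\theta_0)\lambda$, which completes the argument.
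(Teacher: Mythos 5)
Your proposal is correct and takes essentially the same approach as the paper's proof: reduction to linear combinations via the Cram\'er--Wold device, computation of the limiting variance using $\esp[\xi_i^3]=0$ and Assumption (A1) (with the deleted observations negligible since $n^{**}/n \conv 0$), and an application of Lyapunov's central limit theorem. The only cosmetic difference is that you check Lyapunov's condition with fourth moments ($\delta=2$) whereas the paper uses third moments ($\delta=1$).
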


\begin{proof}[Proof of Lemma \ref{lemma.An}]
We will show that any linear combination of the coordinates of \(A_{1:n}(\theta_0)\) is asymptotically normal using Lyapounov's Theorem. Let \(\alpha\in\R^3\), \(\|\alpha\|\neq 0\), so that differential calculus allows us to write
\begin{align}
\left<\alpha, A_{1:n}^*(\theta_0)\right>
&= \alpha_1 \cdot \left.\frac{\partial l_{1:n}^*(X_{1:n}|\theta)}{\partial \gamma}\right|_{\theta_0}
 + \alpha_2 \cdot \left.\frac{\partial l_{1:n}^*(X_{1:n}|\theta)}{\partial u}\right|_{\theta_0}
 + \alpha_3 \cdot \left.\frac{\partial l_{1:n}^*(X_{1:n}|\theta)}{\partial \sigma^2}\right|_{\theta_0} \nonumber \\
&= \alpha_1 \cdot \frac{1}{\sigma_0^2}\sum_{i=1}^{n^*} \left[ (t_i - u_0)\one_{\interfo{t_i}{+\infty}}(u_0) \cdot \xi_i \right]
 - \alpha_2 \cdot \frac{\gamma_0}{\sigma_0^2} \sum_{i=1}^{n^*} \left[ \one_{\interfo{t_i}{+\infty}}(u_0) \cdot \xi_i \right] \nonumber\\
&\qquad + \alpha_3 \cdot \frac{1}{2\sigma_0^2} \sum_{i=1}^{n^*} \left[ \frac{1}{\sigma_0^2} \cdot \xi_i^2 - 1 \right] \nonumber \\
&= \sigma_0^{-2} \sum_{i=1}^{n^*} Z_i, \nonumber
\intertext{where we denote, for \(i=1,\ldots,{n^*}\)}
Z_i &= \left[ \Big\{(t_i - u_0)\one_{\interfo{t_i}{+\infty}}(u_0)\cdot \alpha_1 - \gamma_0\one_{\interfo{t_i}{+\infty}}(u_0)\cdot \alpha_2\Big\}\cdot \xi_i + \frac{1}{2}\alpha_3 \cdot \left\{\sigma_0^{-2}\cdot \xi_i^2 - 1\right\} \right].\label{eq.definitionZi}
\end{align}
Since for \(i=1,\ldots,{n^*}\)  \(\esp[\xi_i] = 0\) and \(\esp[\xi_i^2] = \sigma^2\), we deduce that \(\esp\left[Z_i\right] = 0\), and hence that \(\esp\left[\left<\alpha, A_{1:n}^*(\theta_0)\right>\right] = 0\).

Let us now find the expression of \(\var\left<\alpha, A_{1:n}^*(\theta_0)\right>\). Because \(\xi_i\) and \(\xi_j\) are independent when \(i\neq j\), so are \(Z_i\) and \(Z_j\) and we hence write
\begin{align*}
\var\left<\alpha, A_{1:n}^*(\theta_0)\right>
&= \sigma_0^{-4} \sum_{i=1}^{n^*} \var Z_i \\
&= \sigma_0^{-4} \sum_{i=1}^{n^*} \left\{ \Big[(t_i - u_0)\one_{\interfo{t_i}{+\infty}}(u_0)\cdot \alpha_1 - \gamma_0\one_{\interfo{t_i}{+\infty}}(u_0)\cdot \alpha_2\Big]^2\cdot \var \xi_i \right.\\
&\qquad\qquad\qquad \left.+ \frac{1}{4}\alpha_3^2 \cdot \var\left[\sigma_0^{-2}\xi_i^2 - 1\right] \right\}, \\
\intertext{because \(\cov\left[\xi_i, \left\{\sigma_0^{-2}\xi_i^2 - 1\right\} \right] = 0\), and we finally get}
\var\left<\alpha, A_{1:n}^*(\theta_0)\right>
&= \sigma_0^{-4} \sum_{i=1}^{n^*} \left\{ \Big[(t_i - u_0)\one_{\interfo{t_i}{+\infty}}(u_0)\cdot \alpha_1 - \gamma_0\one_{\interfo{t_i}{+\infty}}(u_0)\cdot \alpha_2\Big]^2\cdot \sigma_0^2 + \frac{1}{4}\alpha_3^2 \cdot 2 \right\}.
\end{align*}
We can hence write
\begin{align*}
n^{*-1}\var\left<\alpha, A_{1:n}^*(\theta_0)\right>
&= \sigma_0^{-2} \frac{1}{{n^*}} \sum_{i=1}^{n^*} \Big[(t_i - u_0)\one_{\interfo{t_i}{+\infty}}(u_0)\cdot \alpha_1 - \gamma_0\one_{\interfo{t_i}{+\infty}}(u_0)\cdot \alpha_2\Big]^2 + \frac{1}{2}\sigma_0^{-4}\alpha_3^2 \\
&= \alpha_1^2 \cdot \sigma_0^{-2} \left\{\frac{1}{{n^*}} \sum_{i=1}^{n^*} (t_i - u_0)^2\one_{\interfo{t_i}{+\infty}}(u_0)\right\} \\
&\qquad - 2\alpha_1\alpha_2 \cdot \sigma_0^{-2} \gamma_0 \left\{\frac{1}{{n^*}} \sum_{i=1}^{n^*} (t_i - u_0)\one_{\interfo{t_i}{+\infty}}(u_0)\right\} \\
&\qquad + \alpha_2^2 \cdot \sigma_0^{-2} \gamma_0^2 \left\{\frac{1}{{n^*}} \sum_{i=1}^{n^*} \one_{\interfo{t_i}{+\infty}}(u_0)\right\} + \alpha_3^2 \cdot \frac{1}{2}\sigma_0^{-4} \\
&=\left<\alpha, I_{1:n}(\theta_0) \alpha\right>,
\end{align*}
where we denote
\begin{align}\label{eq.definitionInstar}
I_{1:n}^*(\theta) &=
\left[\begin{array}{ccc}
    \displaystyle \sigma^{-2} \frac{1}{{n^*}} \sum_{i=1}^{n^*} (t_i - u)^2\one_{\interfo{t_i}{+\infty}}(u) & \displaystyle -\sigma^{-2} \gamma \frac{1}{{n^*}} \sum_{i=1}^{n^*} (t_i - u)\one_{\interfo{t_i}{+\infty}}(u) & 0\\
    & \displaystyle \sigma^{-2} \gamma^2 \frac{1}{{n^*}} \sum_{i=1}^{n^*} \one_{\interfo{t_i}{+\infty}}(u)  & 0 \\
    & & \displaystyle \frac{1}{2}\sigma^{-4}
    \end{array}\right].
\end{align}
Remark that, by virtue of Assumption (A1), it is easy to check that for any \(\theta\in\Theta\)
\begin{align}\label{eq.IntendversI}
I_{1:n}^*(\theta)&\conv I(\theta),
\end{align}
and observe that just like \(I(\theta)\), \(I_{1:n}^*(\theta)\) is positive definite, since all its principal minor determinants are positive.

Let us now check that the random variables \(Z_i\) meet Lyapounov's Theorem \cite[see][page 362]{BillingsleyPM} requirements before wrapping up this proof. The random variables \(Z_i\) are independent and trivially \(L^2\). We denote \(V_n^{*2} = \sum_{i=1}^{n^*} \var Z_i\) and claim that Lyapounov's condition holds, that is
\begin{align*}
\exists \delta > 0, \;\sum_{i=1}^{n^*} \esp\left|\frac{Z_i - \esp Z_i}{V_n^*}\right|^{2+\delta} = \po(1).
\end{align*}
Indeed we have (\(\delta=1\))
\begin{align*}
\sum_{i=1}^{n^*}\esp\left|\frac{Z_i - \esp Z_i}{V_n^*}\right|^{3} &= \sum_{i=1}^{n^*} \esp\left|\frac{Z_i}{V_n^*}\right|^{3} \\
&= \frac{{n^*}}{\var^{\frac{3}{2}}\left<\alpha, A_{1:n}^*(\theta_0)\right>} \cdot \frac{1}{{n^*}} \sum_{i=1}^{n^*} \esp\left|Z_i\right|^{3} \\
&= \frac{1}{{n^*}^{\frac{1}{2}}\left<\alpha, I_{1:n}^*(\theta_0) \alpha\right>^{\frac{3}{2}}} \cdot \frac{1}{{n^*}} \sum_{i=1}^{n^*} \esp\left|Z_i\right|^{3}.
\end{align*}
The first term of this last product is \(\go\left(n^{*-\frac{1}{2}}\right)\) thanks to \eqref{eq.IntendversI}, and recalling the definition of \(Z_i\) from \eqref{eq.definitionZi}, there is no difficulty in showing that the last term of the product, namely \(\frac{1}{{n^*}} \sum_{i=1}^{n^*} \esp\left|Z_i\right|^{3}\) converges to a finite limit. Indeed we find, using trivial dominations and Assumption (A1) once again,
\begin{align*}
|Z_i|^3 &= \left| \left\{(t_i - u_0)\one_{\interfo{t_i}{+\infty}}(u_0)\cdot \alpha_1 - \gamma_0\one_{\interfo{t_i}{+\infty}}(u_0)\cdot \alpha_2\right\}\cdot \xi_i + \frac{1}{2}\alpha_3 \cdot \left\{\sigma_0^{-2}\cdot \xi_i^2 - 1\right\} \right|^3 \\
\esp |Z_i|^3 &\leqslant \left(\left|(t_i - u_0)\one_{\interfo{t_i}{+\infty}}(u_0)\cdot \alpha_1 - \gamma_0\one_{\interfo{t_i}{+\infty}}(u_0)\cdot \alpha_2\right| + \left|\frac{1}{2}\alpha_3\right|\right)^3 \\
&\qquad \times \esp \left(\left|\xi_i\right| + \left|\sigma_0^{-2}\cdot \xi_i^2 - 1\right|\right)^3 \\
\frac{1}{n}\sum_{i=1}^n \esp |Z_i|^3 &\leqslant \frac{1}{n} \sum_{i=1}^n \left(\left|(t_i - u_0)\one_{\interfo{t_i}{+\infty}}(u_0)\cdot \alpha_1 - \gamma_0\one_{\interfo{t_i}{+\infty}}(u_0)\cdot \alpha_2\right| + \left|\frac{1}{2}\alpha_3\right|\right)^3 \\
&\qquad \times \esp \left(\left|\xi_i\right| + \left|\sigma_0^{-2}\cdot \xi_i^2 - 1\right|\right)^3 \\
&\leqslant \go(1) \cdot \frac{1}{n} \sum_{i=1}^n \left(\left|(t_i - u_0)\one_{\interfo{t_i}{+\infty}}(u_0)\cdot \alpha_1 - \gamma_0\one_{\interfo{t_i}{+\infty}}(u_0)\cdot \alpha_2\right| + \left|\frac{1}{2}\alpha_3\right|\right)^3 \\
&\leqslant \go(1).
\end{align*}
Lyapounov's Theorem thus applies here and leads to
\begin{align*}
\sum_{i=1}^{n^*}\frac{Z_i-\esp Z_i}{V_n^*} &\convloi \loi{N}(0, 1),
\intertext{i.e. multiplying numerator and denominator by \(\sigma_0^{-2}\) we get}
\frac{\left<\alpha, A_{1:n}^*(\theta_0)\right>}{\var^{\frac{1}{2}}\left<\alpha, A_{1:n}^*(\theta_0)\right>} &\convloi \loi{N}(0, 1),
\intertext{that is}
\frac{\left<\alpha, A_{1:n}^*(\theta_0)\right>}{n^{*\frac{1}{2}}\left<\alpha, I_{1:n}^*(\theta_0) \alpha\right>^{\frac{1}{2}}} &\convloi \loi{N}(0, 1),
\intertext{and because of \eqref{eq.IntendversI} we can also write,}
\frac{\left<\alpha, A_{1:n}^*(\theta_0)\right>}{n^{*\frac{1}{2}}\left<\alpha, I(\theta_0) \alpha\right>^{\frac{1}{2}}} &\convloi \loi{N}(0, 1),
\end{align*}
which, remembering that a.s. \(n^* \sim n\), is equivalent to \eqref{eq.lemma.An1}.
\end{proof}

\begin{lemma}\label{lemma.Bn}
Recall the definition of \(B_{1:n}^*\) given in \eqref{eq.definitionBnstar}. Under Assumptions (A1)--(A4) and conditions \eqref{eq.definitiondn}, as \(n\conv+\infty\),
\begin{align}
&\dfrac{1}{n}B_{1:n}^*(\theta_0) \convps I(\theta_0), \text{ as } n\conv +\infty. \label{eq.lemma.Bn1}\\
&\dfrac{1}{n}B_{1:n}^*(\theta) \convps I(\theta_0), \text{ as } \theta \conv \theta_0 \text{ and  } n\conv +\infty.\label{eq.lemma.Bn2}
\end{align}
where the asymptotic Fisher Information Matrix \(I(\cdot)\) is defined in \eqref{eq.lemmaasympFishInfMatrix}.
\end{lemma}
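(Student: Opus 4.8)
The plan is to compute the entries of $B_{1:n}^*(\theta)$ explicitly and identify their almost sure limits. The decisive observation is that, for $u\in D_n$, no temperature $t_i$ retained in the pseudo-problem lies in $D_n$, so that $\one_{\interfo{t_i}{+\infty}}(u) = \one_{\interfo{t_i}{+\infty}}(u_0)$ for every such $i$ and every $u\in D_n$. Writing $\one_i = \one_{\interfo{t_i}{+\infty}}(u_0)$ and $r_i(\theta) = X_i - \gamma(t_i-u)\one_i$, the map $\theta\mapsto l_{1:n}^*(X_{1:n}|\theta)$ is a smooth function on the slab $\{u\in D_n\}$ whose second derivatives are polynomials in $\gamma$, $(t_i-u)$ and $r_i(\theta)$ divided by powers of $\sigma^2$. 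First I would record these six entries; the two purely deterministic ones are $\partial^2_{\gamma\gamma} l_{1:n}^* = -\sigma^{-2}\sum_i (t_i-u)^2\one_i$ and $\partial^2_{uu} l_{1:n}^* = -\sigma^{-2}\sum_i\gamma^2\one_i$, while the remaining four carry the residuals $r_i(\theta)$ (linearly for the mixed $\sigma^2$ derivatives, through $r_i^2$ for $\partial^2_{\sigma^2\sigma^2}$).

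For \eqref{eq.lemma.Bn1} I would evaluate at $\theta_0$, where $r_i(\theta_0) = \xi_i$. Each normalised entry $\frac{1}{n}(B_{1:n}^*)(\theta_0)$ then splits into deterministic averages $\frac{1}{n}\sum_i h(t_i)\one_i$ and noise averages $\frac{1}{n}\sum_i h(t_i)\one_i\xi_i$ or $\frac{1}{n}\sum_i(\xi_i^2-\sigma_0^2)$. The deterministic averages converge by Assumption (A1) and Remark 2 (equivalently \eqref{eq.3convunif}) to the integrals $\int_{\underline{u}}^{u_0}h(t)f(t)\ud t$ that make up $I(\theta_0)$; the noise averages converge a.s. to $0$ by Kolmogorov's criterion (exactly as in the proof of Lemma \ref{lemma.sylwestertheo35}) and by the strong law of large numbers. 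Replacing $\sum_{i=1}^{n^*}$ by $\sum_{i=1}^n$ alters each average by at most $(n^{**}/n)\cdot\go(1) = \po(1)$ since the summands are bounded and $n^{**}/n\to 0$. Matching the six limits with \eqref{eq.lemmaasympFishInfMatrix} gives $\frac{1}{n}B_{1:n}^*(\theta_0)\convps I(\theta_0)$.

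The real work is \eqref{eq.lemma.Bn2}, where $\theta$ moves towards $\theta_0$ as $n$ grows. The constraint $u\in D_n$ already forces $|u-u_0| < d_n/2\to 0$, so this is genuinely a joint limit and one cannot freeze $\theta\neq\theta_0$ and let $n\to\infty$ (then $u\notin D_n$ eventually). I would therefore bound $\frac{1}{n}B_{1:n}^*(\theta) - I(\theta_0)$ uniformly over $\{\,u\in D_n,\ \|\theta-\theta_0\|\le\epsilon\,\}$ and let $n\to\infty$ then $\epsilon\to 0$. The polynomial structure makes this clean: each deterministic part is a finite combination $\sum_k a_k(\theta)\cdot\frac{1}{n}\sum_i t_i^k\one_i$ with $a_k$ continuous and bounded near $\theta_0$, and the finitely many scalar averages converge by \eqref{eq.3convunif}, so these parts converge uniformly in $\theta$ to continuous functions equal at $\theta_0$ to the deterministic part of $I(\theta_0)$. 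For the residual-carrying parts I would substitute $r_i(\theta) = \nu_i(\eta) + \xi_i$, with $\nu_i(\eta) = \mu(\eta_0,t_i)-\mu(\eta,t_i)$ as in \eqref{eq.definitionsigma2ai}, and use $\sup_i|\nu_i(\eta)|\le C\|\eta-\eta_0\|\to0$ (Lemma \ref{lemma.convergenceuniformemu}) together with $\frac{1}{n}\sum_i\one_i\xi_i\convps0$, $\frac{1}{n}\sum_i(t_i-u_0)\one_i\xi_i\convps0$, $\frac{1}{n}\sum_i\xi_i^2\convps\sigma_0^2$ and $\frac{1}{n}\sum_i|\xi_i|\convps\esp|\xi_1|$; the cross term $\frac{2}{n}\sum_i\nu_i(\eta)\xi_i$ is dominated by $\sup_i|\nu_i(\eta)|\cdot\frac{1}{n}\sum_i|\xi_i|\to0$, and the $(t_i-u)$ weights reduce to the $u_0$ case via $(t_i-u)=(t_i-u_0)+(u_0-u)$ with $u_0-u\to0$. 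Collecting these estimates yields $\frac{1}{n}B_{1:n}^*(\theta)\convps I(\theta_0)$ jointly.

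The main obstacle I anticipate is precisely this joint passage to the limit in \eqref{eq.lemma.Bn2}: one must keep track that the indicators are \emph{frozen} at $u_0$ throughout the slab $\{u\in D_n\}$ (so the limiting integrals retain the upper endpoint $u_0$, not $u$), and one must control the noise averages uniformly in $\theta$ rather than pointwise. Both difficulties dissolve once the $\theta$-dependence is factored out of the averages through the polynomial-in-$(\gamma,t_i-u)$ structure, reducing everything to the finitely many scalar limits already furnished by Assumption (A1) and the strong law.
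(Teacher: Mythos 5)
Your proposal is correct and, at the level of overall strategy, matches the paper's proof: write out the six entries of \(B_{1:n}^*(\theta)\) explicitly, split each normalised entry into a deterministic average plus noise averages, obtain the deterministic limits from Assumption (A1) (the scalar limits \eqref{eq.3convunif}), and kill the noise terms with the strong law of large numbers and Kolmogorov's criterion, the only delicate point being uniformity in \(\theta\) for the joint limit \eqref{eq.lemma.Bn2}. Where you genuinely differ is the mechanism used for that uniformity, and your route is arguably cleaner. The paper never exploits the fact that the indicators are constant in \(u\) on the slab \(\{u\in D_n\}\): it keeps \(\one_{\interfo{t_i}{+\infty}}(u)\) and \(\one_{\interfo{t_i}{+\infty}}(u_0)\) as distinct objects, studies the difference \(\frac{1}{n^*}B_{1:n}^*(\theta_0)-\frac{1}{n^*}B_{1:n}^*(\theta)\) entry by entry, and controls the indicator mismatches with Lemma \ref{lemma.convpuissk} (plus the uniform convergence of \(F_n\) to \(F\)) and the \(\theta\)-dependent noise sums with Lemma \ref{lemma.sylwestertheo35}. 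Your frozen-indicator observation --- legitimate because every retained \(t_i\) lies outside \(D_n\) while \(u\) and \(u_0\) both lie in \(D_n\), so \(\one_{\interfo{t_i}{+\infty}}(u)=\one_{\interfo{t_i}{+\infty}}(u_0)\) on the slab --- factors the entire \(\theta\)-dependence out of the sums, reducing both claims to finitely many fixed scalar limits multiplied by coefficients continuous in \(\theta\); Lemmas \ref{lemma.convpuissk} and \ref{lemma.sylwestertheo35} then become unnecessary, and the meaning of the joint limit in \eqref{eq.lemma.Bn2} is transparent. Two small slips are worth patching, neither fatal: (i) the \(\partial^2/\partial\gamma\partial u\) entry also carries the residual \(r_i(\theta)\) linearly (through \(X_i\)), not only the mixed \(\sigma^2\) entries, though your scheme covers it anyway; (ii) the replacement of \(\sum_{i=1}^{n^*}\) by \(\sum_{i=1}^{n}\) cannot be justified by ``bounded summands'' for the noise terms, since the \(\xi_i\) are unbounded --- use Cauchy--Schwarz instead, namely \(\frac{1}{n}\sum_{i:\,t_i\in D_n}|h(t_i)\xi_i|\leqslant \sup|h|\cdot\bigl(\frac{1}{n}\sum_{i=1}^{n}\xi_i^2\bigr)^{1/2}\bigl(n^{**}/n\bigr)^{1/2}\convps 0\), or apply Kolmogorov's criterion to the full-sample sums; the paper is equally brisk on this point, simply normalising by \(1/n^*\) and invoking \(n^*\sim n\).
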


\begin{proof}[Proof of Lemma \ref{lemma.Bn}]We will prove each claim separately.

\vspace{0.25em}\par\noindent\textsc{Proof of \eqref{eq.lemma.Bn1}.~} Differential calculus provides the following expressions for the coefficients of \(\dfrac{1}{{n^*}}B_{1:n}^*(\theta)\).
\begin{align*}
\left(\dfrac{1}{{n^*}}B_{1:n}^*(\theta)\right)_{11} &= \sigma^{-2}\frac{1}{{n^*}} \sum_{i=1}^{n^*} (t_i - u)^2 \one_{\interfo{t_i}{+\infty}}(u), \\
\left(\dfrac{1}{{n^*}}B_{1:n}^*(\theta)\right)_{12} &= \sigma^{-2}\frac{1}{{n^*}} \sum_{i=1}^{n^*} \left[\xi_i + \gamma_0\cdot(t_i - u_0)\one_{\interfo{t_i}{+\infty}}(u_0)- 2\gamma\cdot(t_i - u)\right] \one_{\interfo{t_i}{+\infty}}(u), \\
\left(\dfrac{1}{{n^*}}B_{1:n}^*(\theta)\right)_{13} &= \sigma^{-4}\frac{1}{{n^*}} \sum_{i=1}^{n^*} \left[\xi_i+\gamma_0\cdot(t_i - u_0)\one_{\interfo{t_i}{+\infty}}(u_0) - \gamma\cdot(t_i - u)\right](t_i - u) \one_{\interfo{t_i}{+\infty}}(u), \\
\left(\dfrac{1}{{n^*}}B_{1:n}^*(\theta)\right)_{22} &= \sigma^{-2}\gamma^2\frac{1}{{n^*}} \sum_{i=1}^{n^*} \one_{\interfo{t_i}{+\infty}}(u), \\
\left(\dfrac{1}{{n^*}}B_{1:n}^*(\theta)\right)_{23} &= -\sigma^{-4}\gamma\frac{1}{{n^*}} \sum_{i=1}^{n^*} \left[\xi_i+\gamma_0\cdot(t_i - u_0)\one_{\interfo{t_i}{+\infty}}(u_0)-\gamma\cdot(t_i - u)\right] \one_{\interfo{t_i}{+\infty}}(u), \\
\left(\dfrac{1}{{n^*}}B_{1:n}^*(\theta)\right)_{33} &= -\frac{1}{2}\sigma^{-4} + \sigma^{-6} \frac{1}{{n^*}} \sum_{i=1}^{n^*} \left[\xi_i+\gamma_0\cdot(t_i - u_0)\one_{\interfo{t_i}{+\infty}}(u_0) - \gamma\cdot(t_i - u)\one_{\interfo{t_i}{+\infty}}(u)\right]^2.
\end{align*}
The convergence we claim is then a direct consequence of Assumption (A1) and the fact that \(n^{*}\sim n\) and, depending on the coefficients, either the Strong Law of Large Numbers or Kolmogorov's criterion. Notice that
\begin{align*}
\frac{1}{n^*}B_{1:n}^*(\theta_0) - I_{1:n}^*(\theta_0) \convps 0,
\end{align*}
where \(I_{1:n}^*\) is defined in \eqref{eq.definitionInstar}.
\vspace{0.25em}\par\noindent\textsc{Proof of \eqref{eq.lemma.Bn2}.~} We will show that in fact, as \(n\conv+\infty\) and \(\theta\conv\theta_0\),
\begin{align*}
C_{1:n}^*(\theta)&=\dfrac{1}{{n^*}}B_{1:n}^*(\theta_0)-\dfrac{1}{{n^*}}B_{1:n}^*(\theta) \convps 0,
\end{align*}
which will end the proof since \(n^{*}\sim n\). We will consider each coefficient of \(C_{1:n}^*(\theta)\) in turn, making use of Assumption (A1) once again and apply repeatedly the Strong Law of Large Numbers and Kolmogorov's criterion as well as Lemma \ref{lemma.convpuissk}, whenever needed.

\begin{align*}
C_{1:n}^*(\theta)_{11}
&= \sigma_0^{-2} \frac{1}{{n^*}} \sum_{i=1}^{n^*} (t_i - u_0)^2 \one_{\interfo{t_i}{+\infty}}(u_0) - \sigma^{-2}\frac{1}{{n^*}} \sum_{i=1}^{n^*} (t_i - u)^2 \one_{\interfo{t_i}{+\infty}}(u) \\
&= \left(\sigma_0^{-2} - \sigma^{-2}\right) \cdot \frac{1}{{n^*}} \sum_{i=1}^{n^*} (t_i - u_0)^2 \one_{\interfo{t_i}{+\infty}}(u_0) \\
&\quad\quad + \sigma^{-2} \cdot \left( \frac{1}{{n^*}} \sum_{i=1}^{n^*} (t_i - u_0)^2 \one_{\interfo{t_i}{+\infty}}(u_0) - \frac{1}{{n^*}} \sum_{i=1}^{n^*} (t_i - u)^2 \one_{\interfo{t_i}{+\infty}}(u) \right) \\
&= \po(1) \cdot \go(1)+\go(1)\cdot \go\left(u - u_0\right) \conv 0.
\intertext{then last equality holding true because of Lemma \ref{lemma.convpuissk}.}
C_{1:n}^*(\theta)_{22} &= \sigma_0^{-2}\gamma_0^2\frac{1}{{n^*}} \sum_{i=1}^{n^*} \one_{\interfo{t_i}{+\infty}}(u_0) - \sigma^{-2}\gamma^2\frac{1}{{n^*}} \sum_{i=1}^{n^*} \one_{\interfo{t_i}{+\infty}}(u) \\
&= \left(\sigma_0^{-2}\gamma_0^2 - \sigma^{-2}\gamma^2\right) \cdot \frac{1}{{n^*}} \sum_{i=1}^{n^*} \one_{\interfo{t_i}{+\infty}}(u_0) \\
&\qquad + \sigma^{-2}\gamma^2 \cdot \left[\frac{1}{{n^*}} \sum_{i=1}^{n^*} \one_{\interfo{t_i}{+\infty}}(u_0) - \frac{1}{{n^*}} \sum_{i=1}^{n^*} \one_{\interfo{t_i}{+\infty}}(u)\right] \\
&= \po(1)\cdot \go(1) + \go(1) \cdot \left[\{F_{n^*}(u_0) - F(u_0)\} + \{F(u_0) - F(u)\} + \{F(u) - F_{n^*}(u)\}\right] \\
&= \po(1) + \go(1) \cdot \left[\po(1) + \po(1) + \po(1)\right] \conv 0,
\intertext{the last equality holding true because of the uniform convergence of \(F_{n^*}\) to \(F\) over any compact subset such as \(\interff{\underline{u}}{\overline{u}}\) (see Assumption (A1), and its Remark 1).}
C_{1:n}^*(\theta)_{33} &= \frac{1}{2}\sigma^{-4} - \sigma^{-6} \frac{1}{{n^*}} \sum_{i=1}^{n^*} [\xi_i +\gamma_0\cdot(t_i - u_0)\one_{\interfo{t_i}{+\infty}}(u_0) - \gamma\cdot(t_i - u)\one_{\interfo{t_i}{+\infty}}(u)]^2 \\
&\qquad- \left(\frac{1}{2}\sigma_0^{-4} - \sigma_0^{-6} \frac{1}{{n^*}} \sum_{i=1}^{n^*} \xi_i^2 \right)\\
&=\frac{1}{2}(\sigma^{-4} - \sigma_0^{-4}) - (\sigma^{-6} - \sigma_0^{-6}) \cdot \frac{1}{{n^*}} \sum_{i=1}^{n^*} \xi_i^2 \\
&\qquad- \sigma^{-6} \frac{1}{{n^*}} \sum_{i=1}^{n^*} \left[\gamma_0\cdot(t_i - u_0)\one_{\interfo{t_i}{+\infty}}(u_0) - \gamma\cdot(t_i - u)\one_{\interfo{t_i}{+\infty}}(u)\right]\xi_i \\
& \qquad - \sigma^{-6} \frac{1}{{n^*}} \sum_{i=1}^{n^*} \left[\gamma_0\cdot(t_i - u_0)\one_{\interfo{t_i}{+\infty}}(u_0) - \gamma\cdot(t_i - u)\one_{\interfo{t_i}{+\infty}}(u)\right]^2 \\
&= \po(1) + \po(1) \cdot \frac{1}{{n^*}} \sum_{i=1}^{n^*} \xi_i^2 + \po(1) + \po(1) \convps 0,
\intertext{where the two last \(\po(1)\) are direct consequences of Lemmas \ref{lemma.convergenceuniformemu} and \ref{lemma.sylwestertheo35}.}
\intertext{Those same Lemmas used together with Lemma \ref{lemma.convpuissk}, the Strong Law of Large Numbers as well as the well-known Cauchy-Schwarz inequality imply that a.s.}
C_{1:n}^*(\theta)_{23} &= \sigma^{-4}\gamma\frac{1}{{n^*}} \sum_{i=1}^{n^*} \left[\xi_i+\gamma_0\cdot(t_i - u_0)\one_{\interfo{t_i}{+\infty}}(u_0)-\gamma\cdot(t_i - u)\right] \one_{\interfo{t_i}{+\infty}}(u) \\
&\qquad -\sigma_0^{-4}\gamma_0\frac{1}{{n^*}} \sum_{i=1}^{n^*} \xi_i \one_{\interfo{t_i}{+\infty}}(u_0) \\
&= \frac{1}{{n^*}} \sum_{i=1}^{n^*} \left[\sigma^{-4}\gamma\one_{\interfo{t_i}{+\infty}}(u)-\sigma_0^{-4}\gamma_0\one_{\interfo{t_i}{+\infty}}(u_0)\right]\xi_i \\
&\qquad + \frac{1}{{n^*}} \sum_{i=1}^{n^*} \left[\gamma_0\cdot(t_i - u_0)\one_{\interfo{t_i}{+\infty}}(u_0)-\gamma\cdot(t_i - u)\right] \sigma^{-4}\gamma\one_{\interfo{t_i}{+\infty}}(u) \\
&= \po(1) + \po(1) \convps 0,
\intertext{and also that a.s.}
C_{1:n}^*(\theta)_{13} &= \sigma_0^{-4}\frac{1}{{n^*}} \sum_{i=1}^{n^*} \left[(t_i - u_0) \one_{\interfo{t_i}{+\infty}}(u_0)\right]\xi_i \\
&\qquad -\sigma^{-4}\frac{1}{{n^*}} \sum_{i=1}^{n^*} \left[\xi_i+\gamma_0\cdot(t_i - u_0)\one_{\interfo{t_i}{+\infty}}(u_0)-\gamma\cdot(t_i - u)\right](t_i - u) \one_{\interfo{t_i}{+\infty}}(u)\\
&=\frac{1}{{n^*}} \sum_{i=1}^{n^*} \left[\sigma_0^{-4}(t_i - u_0) \one_{\interfo{t_i}{+\infty}}(u_0)-\sigma^{-4}(t_i - u) \one_{\interfo{t_i}{+\infty}}(u)\right] \xi_i \\
&\qquad - \sigma^{-4} \frac{1}{{n^*}} \sum_{i=1}^{n^*} \left[\gamma_0\cdot(t_i - u_0)\one_{\interfo{t_i}{+\infty}}(u_0)-\gamma\cdot(t_i - u)\right](t_i - u) \one_{\interfo{t_i}{+\infty}}(u)\\
&= \po(1) + \po(1) \convps 0.
\intertext{and finally that a.s.}
C_{1:n}^*(\theta)_{12} &= \sigma_0^{-2}\frac{1}{{n^*}} \sum_{i=1}^{n^*} \left[\xi_i - \gamma_0\cdot(t_i - u_0)\right] \one_{\interfo{t_i}{+\infty}}(u_0) \\
&\qquad - \sigma^{-2}\frac{1}{{n^*}} \sum_{i=1}^{n^*} \left[\xi_i + \gamma_0\cdot(t_i - u_0)\one_{\interfo{t_i}{+\infty}}(u_0) - 2\gamma\cdot(t_i - u)\right] \one_{\interfo{t_i}{+\infty}}(u) \\
&= \frac{1}{{n^*}} \sum_{i=1}^{n^*} \xi_i \cdot \left[\sigma_0^{-2}\one_{\interfo{t_i}{+\infty}}(u_0)   - \sigma^{-2}\one_{\interfo{t_i}{+\infty}}(u)\right] \\
&\qquad +\frac{1}{{n^*}} \sum_{i=1}^{n^*} \left[-\sigma_0^{-2}\gamma_0\cdot(t_i - u_0) \one_{\interfo{t_i}{+\infty}}(u_0) - \sigma^{-2}(\gamma_0\cdot(t_i - u_0) \one_{\interfo{t_i}{+\infty}}(u_0) \right. \\
&\qquad\qquad\qquad\qquad \left. - 2\gamma\cdot(t_i - u) \one_{\interfo{t_i}{+\infty}}(u))\right] \\
&= \po(1) + \po(1) \convps 0.
\end{align*}
\end{proof}

\begin{proposition}\label{prop.majorationdiffdeL}
Let \(0<\delta\), and let \((\rho_n)_{n\in\N}\) be a positive sequence such that, as \(n\conv+\infty\)
\begin{align}
\rho_n &= \go(1) \label{eq.definitionrhon1}\\
n^{-\frac{1}{2}} (\log n) \cdot \rho_n^{-1} &\conv 0 \label{eq.definitionrhon2}
\end{align}
and denote
\begin{align*}
B^c(\theta_0, \delta \rho_n) = \left\{\theta \in \Theta,\; \|\theta-\theta_0\| \geqslant \delta \rho_n\right\},
\end{align*}
Then, under Assumptions (A1)--(A4), a.s., there exists \(\epsilon> 0\) such that, for any \(n\) large enough
\begin{align}
\sup_{\theta \in B^c(\theta_0, \delta \rho_n)} \frac{1}{n \rho_n^2} [l_{1:n}(X_{1:n}|\theta) - l_{1:n}(X_{1:n}|\widehat{\theta}_n)] &\leqslant -\epsilon. \label{eq.supdiffdeLrhon1} \\
\sup_{\theta \in B^c(\theta_0, \delta \rho_n)} \frac{1}{n \rho_n^2} [l_{1:n}(X_{1:n}|\theta) - l_{1:n}(X_{1:n}|\theta_0)] &\leqslant -\epsilon. \label{eq.supdiffdeLrhon2}
\end{align}
\end{proposition}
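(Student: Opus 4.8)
The plan is to establish \eqref{eq.supdiffdeLrhon2} and deduce \eqref{eq.supdiffdeLrhon1} for free. Indeed $\widehat{\theta}_n$ maximises the likelihood, so $l_{1:n}(X_{1:n}|\theta_0)\leqslant l_{1:n}(X_{1:n}|\widehat{\theta}_n)$ and hence $l_{1:n}(X_{1:n}|\theta)-l_{1:n}(X_{1:n}|\widehat{\theta}_n)\leqslant l_{1:n}(X_{1:n}|\theta)-l_{1:n}(X_{1:n}|\theta_0)$ pointwise; \eqref{eq.supdiffdeLrhon1} then follows at once from \eqref{eq.supdiffdeLrhon2}. For \eqref{eq.supdiffdeLrhon2} I would start from the exact identity \eqref{eq.diffdelog2}, writing $\frac{2}{n}[l_{1:n}(X_{1:n}|\theta)-l_{1:n}(X_{1:n}|\theta_0)]=-b_n(\theta)+T_1(\theta)+T_2(\theta)$, where $b_n$ is as in \eqref{eq.definitionsylwesterbn}, $T_1(\theta)=\frac{\sigma_0^2-\sigma^2}{\sigma^2\sigma_0^2}\big(\frac1n\sum_i\xi_i^2-\sigma_0^2\big)$ and $T_2(\theta)=-\frac{2}{\sigma^2}\frac1n\sum_i[\mu(\eta_0,t_i)-\mu(\eta,t_i)]\xi_i$. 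The whole point is to show that a.s., for $n$ large, $|T_1(\theta)|+|T_2(\theta)|\leqslant\frac12 b_n(\theta)$ uniformly over $B^c(\theta_0,\delta\rho_n)$. Granting this, $\frac{1}{n\rho_n^2}[l_{1:n}(X_{1:n}|\theta)-l_{1:n}(X_{1:n}|\theta_0)]\leqslant-\frac{b_n(\theta)}{4\rho_n^2}$, and since Lemma \ref{lemma.majorationetamoinseta0} gives $\frac1n\sum_i[\mu(\eta_0,t_i)-\mu(\eta,t_i)]^2\geqslant C\|\eta-\eta_0\|^2$ while the elementary coercivity of $x\mapsto x-1-\log x$ controls the $\sigma^2$ part, one has $b_n(\theta)\geqslant c\|\theta-\theta_0\|^2\geqslant c\delta^2\rho_n^2$ on the bounded part of $B^c(\theta_0,\delta\rho_n)$ and $b_n(\theta)$ bounded below by a positive constant on its unbounded part; in both cases the supremum is $\leqslant-\epsilon$ for a suitable $\epsilon>0$.

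The term $T_1$ is routine. One has $|T_1(\theta)|\leqslant C|\sigma^2-\sigma_0^2|\,a_n$ with $a_n:=\big|\frac1n\sum_i\xi_i^2-\sigma_0^2\big|$, which is a.s. $\go(n^{-\frac12}(\log\log n)^{\frac12})$ by the law of the iterated logarithm and hence $\po(\rho_n)$ by \eqref{eq.definitionrhon2}. Comparing $|\sigma^2-\sigma_0^2|\,a_n$ with the lower bound for $b_n$ just described (distinguishing whether $\sigma^2$ stays near $\sigma_0^2$, where $b_n\gtrsim\|\theta-\theta_0\|^2\geqslant\delta^2\rho_n^2$, or escapes to $0$ or $+\infty$, where $x-1-\log x$ blows up) yields $|T_1(\theta)|\leqslant\frac18 b_n(\theta)$ for $n$ large.

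The crux is $T_2$, and here I would reuse the projection device from the proof of Proposition \ref{prop.feder314}. Set $\mu(\eta):=(\mu(\eta,t_i))_{i\leqslant n}$ and $\mu_0:=\mu(\eta_0)$, and note that $\mu(\eta)=\beta\,v_1(u)+\gamma\,v_2(u)$ lies in $\mathrm{span}\{v_1(u),v_2(u)\}$, where $v_1(u)=(\one_{\interfo{t_i}{+\infty}}(u))_i$ and $v_2(u)=(t_i\one_{\interfo{t_i}{+\infty}}(u))_i$. Consequently $\mu_0-\mu(\eta)$ belongs to the (at most) three-dimensional space $\mathcal{G}^+_u:=\mathrm{span}\{v_1(u),v_2(u),\mu_0\}$, so that $\langle\mu_0-\mu(\eta),\xi\rangle=\langle\mu_0-\mu(\eta),Q_u^+\xi\rangle$ for $Q_u^+$ the orthogonal projection onto $\mathcal{G}^+_u$. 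Cauchy--Schwarz then gives $|T_2(\theta)|\leqslant\frac{2}{\sigma^2}\,n^{-\frac12}\big(\frac1n\sum_i[\mu(\eta_0,t_i)-\mu(\eta,t_i)]^2\big)^{\frac12}\,\|Q_u^+\xi\|$, which usefully factors out the very quantity bounded below by Lemma \ref{lemma.majorationetamoinseta0}.

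It remains to control $M_n:=\sup_{u\in\interff{\underline{u}}{\overline{u}}}\|Q_u^+\xi\|$ uniformly in $u$, and I expect this to be the main obstacle. The saving observation is that $\mathcal{G}^+_u$ takes only finitely many values as $u$ varies, because $v_1(u)$ changes only when $u$ crosses one of the $t_i$; there are thus at most $n+1$ distinct spaces, each of dimension at most three. For each of them $\|Q_u^+\xi\|^2$ is at most $\sigma_0^2\chi^2(3)$-distributed, so a union bound over the $n+1$ possibilities, combined with the chi-squared tail estimate underlying Lemma \ref{lemma.suiteiidOlogn}, gives $M_n=\go(\log n)$ a.s. Inserting this into the bound for $T_2$ and applying the arithmetic--geometric mean inequality, $|T_2(\theta)|\leqslant\frac14\cdot\frac1{\sigma^2}\,\frac1n\sum_i[\mu(\eta_0,t_i)-\mu(\eta,t_i)]^2+\frac{4}{\sigma^2}\,n^{-1}M_n^2\leqslant\frac14 b_n(\theta)+\frac{C}{\sigma^2}\,n^{-1}\log^2 n$. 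Since $\|\theta-\theta_0\|\geqslant\delta\rho_n$ forces either $\frac1n\sum_i[\mu(\eta_0,t_i)-\mu(\eta,t_i)]^2\gtrsim\rho_n^2$ or $|\sigma^2-\sigma_0^2|\gtrsim\rho_n$, condition \eqref{eq.definitionrhon2}, which precisely says $n^{-1}\log^2 n=\po(\rho_n^2)$, makes this last term $\leqslant\frac18 b_n(\theta)$ for $n$ large. This is exactly where the calibration $n^{-\frac12}(\log n)\rho_n^{-1}\to0$ enters, and it is why the hypothesis takes that form. Collecting the bounds on $T_1$ and $T_2$ gives $|T_1|+|T_2|\leqslant\frac12 b_n$ and closes the argument.
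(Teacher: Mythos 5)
Your proposal is correct, but it reaches \eqref{eq.supdiffdeLrhon2} by a genuinely different route than the paper. The skeleton is shared: the reduction of \eqref{eq.supdiffdeLrhon1} to \eqref{eq.supdiffdeLrhon2} via \(l_{1:n}(X_{1:n}|\widehat{\theta}_n)\geqslant l_{1:n}(X_{1:n}|\theta_0)\), the decomposition \eqref{eq.diffdelog2} into \(-b_n+T_1+T_2\), and the coercivity inputs (Lemma \ref{lemma.majorationetamoinseta0} together with \(x\mapsto x-1-\log x\)). The difference is how the supremum over \(B^c(\theta_0,\delta\rho_n)\) is handled. The paper extracts a maximizing sequence \(\theta_n\) (its Step 1), bounds the cross term only along that sequence by factorizing it as \(\big(\sum_i[\mu(\eta_0,t_i)-\mu(\eta_n,t_i)]^2\big)^{\frac12}R_{2,n}\) with \(R_{2,n}=\go(\log n)\) from Lemma \ref{lemma.suiteiidOlogn}, and then needs spherical coordinates and a compactness/subsequence analysis (Steps 3--5) to produce a limit point \(\theta_\infty\) with \(\sigma_\infty^2\in\interoo{0}{+\infty}\) and conclude by a two-case argument at that limit. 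You instead prove a bound that is uniform in \(\theta\) from the outset: writing \(\mu(\eta)=\beta v_1(u)+\gamma v_2(u)\), the cross term is controlled by \(\|Q_u^+\xi\|\) with \(Q_u^+\) the projection onto \(\mathrm{span}\{v_1(u),v_2(u),\mu_0\}\); these spaces take at most \(n+1\) deterministic values, each of dimension at most \(3\), so a union bound with chi-square tails and Borel--Cantelli gives \(\sup_u\|Q_u^+\xi\|=\go(\log n)\) a.s., after which the arithmetic--geometric mean inequality absorbs half of \(T_2\) into \(b_n\) and leaves a remainder \(\sigma^{-2}\go(n^{-1}\log^2 n)=\sigma^{-2}\po(\rho_n^2)\) by \eqref{eq.definitionrhon2}. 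Your route buys something real: the paper's Step 2 applies Lemma \ref{lemma.suiteiidOlogn} to the normalized cross term evaluated at the \emph{random} maximizer \(\theta_n\), which depends on \(\xi\), whereas that lemma concerns a fixed Gaussian sequence; your union bound over deterministic subspaces is precisely the uniformization that makes this step airtight, and it dispenses with the paper's Steps 1, 3 and 4 altogether. What the paper's route buys is that no finite-subspace observation is required, and localizing at a single limit point keeps every individual estimate elementary. One detail to spell out when you write this up: in the final comparison of the remainder with \(b_n\), uniformity as \(\sigma^2\conv 0\) or \(\sigma^2\conv+\infty\) must be checked; it does go through, because the remainder and the lower bound \(\sigma^{-2}C\|\eta-\eta_0\|^2\) carry the same factor \(\sigma^{-2}\), while \(\frac{\sigma_0^2}{\sigma^2}-1-\log\frac{\sigma_0^2}{\sigma^2}\) blows up at \(0\) and \(+\infty\) --- the same move you already make explicit for \(T_1\).
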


\begin{proof}[Proof of Proposition \ref{prop.majorationdiffdeL}]This proposition is to be compared to the regularity condition imposed in \cite{Ghosh} (see their condition (A4) in Chapter 4). The aim of this proposition is to show that our model satisfies to a somewhat stronger version of that condition.

Let \(0<\delta\). Notice first that, similarly to what was done in \eqref{eq.sylwesterps}, we are able to deduce that a.s.
\begin{align}
\frac{2}{n}[l_{1:n}(X_{1:n}|\theta) - l_{1:n}(X_{1:n}|\widehat{\theta}_n)] &\leqslant \frac{2}{n}[l_{1:n}(X_{1:n}|\theta) - l_{1:n}(X_{1:n}|\theta_0)] =: i_n(\theta) \label{eq.majorationdiffdeLparin}.
\end{align}
where \(i_n\) is defined over \(\R \times \interff{\underline{u}}{\overline{u}} \times \R_+^* \supset \Theta \supset B^c(\theta_0, \delta \rho_n)\) by
\begin{align}
i_n(\theta) &= \log\frac{\sigma_0^2}{\sigma^2} +1 + \frac{1}{n\sigma_0^2}\sum_{i=1}^n (\xi_i^2 - \sigma_0^2) - \frac{1}{n\sigma^2}\sum_{i=1}^n [\xi_i + \mu(\eta_0, t_i) - \mu(\eta, t_i)]^2. \label{eq.definitionin1} \\
&= \log\frac{\sigma_0^2}{\sigma^2} +1 - \frac{\sigma_0^2}{\sigma^2} + \frac{1}{n\sigma_0^2}\sum_{i=1}^n (\xi_i^2 - \sigma_0^2) - \frac{1}{n\sigma^2}\sum_{i=1}^n \left\{ [\xi_i + \mu(\eta_0, t_i) - \mu(\eta, t_i)]^2 - \sigma_0^2\right\}. \label{eq.definitionin2}
\end{align}
From \eqref{eq.majorationdiffdeLparin} it is clear that we need only prove \eqref{eq.supdiffdeLrhon2} to end the proof.

The rest of this proof is divided into 6 major steps. Step 1 shows that for a given \(n\) the supremum considered is reached on a point \(\theta_n\). Step 2 and 3 focus on obtaining useful majorations of the supremum. Step 4 is dedicated to proving that the sequence \(\theta_n\) admits an accumulation point (the coordinates of which satisfy to some conditions), while step 5 makes use of this last fact to effectively dominate the supremum. Step 6 wraps up the proof.

\vspace{0.25em}\par\noindent\textsc{Step 1.~} We first show that a.s. for any \(n\) there exists \(\theta_n\in \R \times \interff{\underline{u}}{\overline{u}} \times \R_+^*\) such that \(\|\theta_n-\theta_0\|\geqslant \delta \rho_n\) and
\begin{align}
i_n(\theta_n) = \sup_{\Theta \in B^c(\theta_0, \delta \rho_n)} i_n(\theta) \label{eq.inatteintsonsup}.
\end{align}
Let \(n\in\N\) and let \((\theta_{n,k})_{k\in\N}\) be a sequence of points in \(B^c(\theta_0, \delta \rho_n)\) such that
\begin{align*}
\lim_{k\conv+\infty} i_n(\theta_{n,k}) = \sup_{\Theta \in B^c(\theta_0, \delta \rho_n)} i_n(\theta).
\end{align*}
From \eqref{eq.definitionin1} it is obvious that \(\sigma_{n,k}^2\) is bounded: if it was not, we would be able to extract a subsequence such that \(\sigma_{n,k_j}^2\) would go to \(+\infty\) and thus \(i_n(\theta_{n,k_j})\) would go to \(-\infty\). For the very same reason, \(\gamma_{n,k}\) too is bounded. Recalling that \(u_{n,k}\) is bounded too by definition, we now see that there exists a subsequence \((\theta_{n,k_j})_{j\in\N}\) in \(B^c(\theta_0, \delta \rho_n)\) and a point \(\theta_n\) in \(\overline{B^c(\theta_0, \delta \rho_n})\) (i.e. in \(\R \times \interff{\underline{u}}{\overline{u}} \times \R_+\), and such that \(\|\theta_n-\theta_0\| \geqslant \delta \rho_n\)) such that \((\theta_{n,k_j})_{j\in\N}\xrightarrow[j\conv+\infty]{}\theta_n\).

Finally from \eqref{eq.definitionin1} again it is easy to see that \(\sigma_n^2>0\) for if it was not \(i_n(\theta_{n,k_j})\) would go to \(-\infty\) once again, unless (by continuity of \(\mu\) with regard to \(\eta\)) \(\xi_i + \mu(\eta_0, t_i) - \mu(\eta_n, t_i) = 0\) for all \(i\leqslant n\) which a.s. does not happen.

\vspace{0.25em}\par\noindent\textsc{Step 2.~} From the previous step and the continuity of \(i_n\) with regard to \(\theta\) we are able to write
\begin{align}
\sup_{\Theta \in B^c(\theta_0, \delta \rho_n)} \frac{2}{n}[l_{1:n}(X_{1:n}|\theta) - l_{1:n}(X_{1:n}|\theta_0)] &= i_n(\theta_n) \label{eq.diffdeLetinthetan}.
\end{align}
where \((\theta_n)_{n\in\N}\) is the sequence defined in Step 1. We now derive a convenient majoration of \(i_n(\theta_n)\). Expanding from \eqref{eq.definitionin2} we get
\begin{align*}
i_n(\theta_n) &= \left(\log\frac{\sigma_0^2}{\sigma_n^2} +1 - \frac{\sigma_0^2}{\sigma_n^2}\right) + \frac{1}{n\sigma_0^2}\sum_{i=1}^n (\xi_i^2 - \sigma_0^2) - \frac{1}{n\sigma^2}\sum_{i=1}^n \left\{ [\xi_i + \mu(\eta_0, t_i) - \mu(\eta, t_i)]^2 - \sigma_0^2\right\} \\
&= \left(\log\frac{\sigma_0^2}{\sigma_n^2} +1 - \frac{\sigma_0^2}{\sigma_n^2}\right) + \frac{\sigma_n^2-\sigma_0^2}{n\sigma_0^2\sigma_n^2}\sum_{i=1}^n (\xi_i^2 - \sigma_0^2) - \frac{1}{n\sigma_n^2}\sum_{i=1}^n [\mu(\eta_0, t_i) - \mu(\eta_n, t_i)]^2 \\
&\qquad - \frac{2}{n\sigma_n^2}\sum_{i=1}^n [\mu(\eta_0, t_i) - \mu(\eta_n, t_i)]\xi_i
\end{align*}
Thanks to Lemma \ref{lemma.majorationetamoinseta0}, we know that there exists  \(C_1\in \R_+^*\) such that
\begin{align*}
i_n(\theta_n)&\leqslant \left(\log\frac{\sigma_0^2}{\sigma_n^2} +1 - \frac{\sigma_0^2}{\sigma_n^2}\right) + \frac{\sigma_n^2-\sigma_0^2}{n\sigma_0^2\sigma_n^2}\sum_{i=1}^n (\xi_i^2 - \sigma_0^2) \\
&\qquad - \frac{1}{\sigma_n^2}C_1\|\eta_n-\eta_0\|^2 - \frac{2}{n\sigma_n^2}\sum_{i=1}^n [\mu(\eta_0, t_i) - \mu(\eta_n, t_i)]\xi_i
\end{align*}
From there, the Law of the Iterated Logarithm and a factorisation of the last term together with Corollary \ref{lemma.suiteiidOlogn} lead to:
\begin{align*}
i_n(\theta_n) &\leqslant \left(\log\frac{\sigma_0^2}{\sigma_n^2} +1 - \frac{\sigma_0^2}{\sigma_n^2}\right) + \frac{1}{\sigma_n^2}|\sigma_n^2-\sigma_0^2|R_{1,n} - \frac{1}{\sigma_n^2}C_1\|\eta_n-\eta_0\|^2 \\
&\qquad + \frac{1}{n\sigma_n^2}\left(\sum_{i=1}^n [\mu(\eta_0, t_i) - \mu(\eta_n, t_i)]^2\right)^{\frac{1}{2}} R_{2,n}
\end{align*}
where a.s. \(R_{1,n} = \go\left(n^{-\frac{1}{2}}(\log\log n)^\frac{1}{2}\right)\) and \(R_{2,n} = \go\left(\log n\right)\).
Lemma \ref{lemma.convergenceuniformemu} ensures there exists \(C_2\in \R_+^*\) such that
\begin{align*}
i_n(\theta_n) &\leqslant \left(\log\frac{\sigma_0^2}{\sigma_n^2} +1 - \frac{\sigma_0^2}{\sigma_n^2}\right) + \frac{1}{\sigma_n^2}|\sigma_n^2-\sigma_0^2|R_{1,n} - \frac{1}{\sigma_n^2}C_1\|\eta_n-\eta_0\|^2 + \frac{1}{n\sigma_n^2} C_2 n^{\frac{1}{2}}\|\eta_n-\eta_0\| R_{2,n}
\end{align*}
We thus deduce that there exists \(C\in\R_+^*\) such that:
\begin{align}
i_n(\theta_n) &\leqslant \left(\log\frac{\sigma_0^2}{\sigma_n^2} +1 - \frac{\sigma_0^2}{\sigma_n^2}\right) - \frac{1}{\sigma_n^2}C\|\eta_n-\eta_0\|^2 + \frac{1}{\sigma_n^2} \|\theta_n-\theta_0\| R_n \label{ineq.majorationin}
\end{align}
where a.s. \(R_n = \go\left(n^{-\frac{1}{2}}\log n\right)\). Notice in particular that, due to \eqref{eq.definitionrhon2}, \(R_n = \po(\rho_n)\).

\vspace{0.25em}\par\noindent\textsc{Step 3.~} We obtain two majorations, \eqref{eq.inpolar2} and \eqref{ineq.inpolar1}, that we will make use of in the coming steps. Using a conversion of \(\theta = (\gamma, u, \sigma^2)\) into the spherical coordinate system we write \(\theta_n\) as
\begin{align*}
\theta_n = (r_n\cos\psi_n\cos\phi_n, r_n\sin\psi_n\cos\phi_n, r_n\sin\phi_n),
\end{align*}
where
\begin{align*}
(r_n,\psi_n,\phi_n) &\in \R_+^* \times \interff{0}{2\pi} \times \interoo{0}{\pi},
\end{align*}
and deduce from \eqref{ineq.majorationin} that
\begin{align}
i_n(\theta_n) &\leqslant \left(\log\frac{\sigma_0^2}{r_n\sin\phi_n} + 1 - \frac{\sigma_0^2}{r_n\sin\phi_n}\right) - C r_n\frac{\cos^2\phi_n}{\sin\phi_n} + \frac{1}{\sin\phi_n} R_n \label{eq.inpolar1} \\
&\leqslant \left(\log\frac{\sigma_0^2}{r_n\sin\phi_n} + 1 - \frac{\sigma_0^2}{r_n\sin\phi_n}\right)  + \frac{1}{\sin\phi_n} \left[R_n - C r_n \cos^2\phi_n \right]. \label{eq.inpolar2}
\end{align}
From \eqref{eq.inpolar1} 
we also get the following majoration
\begin{align}
i_n(\theta_n) &\leqslant \left(\log\frac{\sigma_0^2}{r_n\sin\phi_n} + 1 - \frac{\sigma_0^2}{r_n\sin\phi_n}\right) + \frac{1}{\sin\phi_n} R_n. \label{ineq.inpolar1} 
\end{align}

\vspace{0.25em}\par\noindent\textsc{Step 4.~} We show that the sequence \((\theta_n)_{n\in\N}\) we built, converges to a finite limit \(\theta_\infty\) (extracting a subsequence if necessary). Extracting a subsequence if necessary, we can assume that \((\psi_n,\phi_n)\conv(\psi_\infty,\phi_\infty)\in\interff{0}{2\pi}\times\interff{0}{\pi}\). We consider the two following mutually exclusive situations.
\vspace{0.25em}\par\noindent\textsc{Situation A: \(\phi_\infty = 0 \mod \pi\).~} In this situation, there exists \(\epsilon > 0\) such that for any \(n\) large enough,
\begin{align*}
\left[R_n - C r_n \cos^2\phi_n \right] &= \left(\frac{R_n}{r_n} - C\cos^2\phi_n\right) r_n \\
&\leqslant - \epsilon r_n,
\end{align*}
because a.s. \(R_n = \po(r_n)\) (since \(R_n = \po(\rho_n)\) and \(r_n \leqslant \rho_n\)). Used together with \eqref{eq.inpolar2}, this leads to
\begin{align*}
i_n(\theta_n) &\leqslant \left(\frac{\sigma_0^2}{r_n\sin\phi_n} - 1 - \log\frac{\sigma_0^2}{r_n\sin\phi_n}\right) - \epsilon \frac{r_n}{\sin\phi_n},
\end{align*}
for any \(n\) large enough and hence \(i_n(\theta_n)\conv-\infty\) whether \(r_n\) goes to zero or not.
\vspace{0.25em}\par\noindent\textsc{Situation B: \(\phi_\infty \neq 0 \mod \pi\).~} In this situation, from \eqref{ineq.inpolar1}, we see that \(r_n\conv 0\) and  \(r_n\conv+\infty\) both lead to \(i_n(\theta_n)\conv-\infty\).

Observing that \(i_n(\theta)\) converges a.s. to a finite value for any \(\theta\in\Theta\) as \(n\conv+\infty\), we see that \(\lim_{n\conv\infty} i_n(\theta_n) = -\infty\) is not possible by construction of the sequence \(\theta_n\), and deduce that, extracting a subsequence if necessary, there exists
\begin{align*}
(r_\infty, \psi_\infty, \phi_\infty) \in \R_+^* \times \interff{0}{2\pi} \times \interoo{0}{\pi},
\end{align*}
such that \(\lim_{n\conv+\infty}\theta_n = \theta_\infty\). Notice that in particular, \(\sigma_\infty^2 > 0\).

\vspace{0.25em}\par\noindent\textsc{Step 5.~} We will now end the proof by showing that there exists \(\epsilon > 0\) such that for any \(n\) large enough
\begin{align}
i_n(\theta_n) &\leqslant -\epsilon \rho_n^2. \label{ineq.minorationunsurdn2in}
\end{align}
We consider the two following mutually exclusive situations.
\vspace{0.25em}\par\noindent\textsc{Situation A: \(\sigma_\infty^2 \neq \sigma_0^2\).~} In this situation, from \eqref{ineq.majorationin} we get
\begin{align*}
i_n(\theta_n) &\leqslant \left(\log\frac{\sigma_0^2}{\sigma_n^2} + 1 - \frac{\sigma_0^2}{\sigma_n^2}\right) + \frac{1}{\sigma_n^2} \left\|\theta_n-\theta_0\right\| R_n
\end{align*}
and the right-hand side converges to
\begin{align*}
\left(\log\frac{\sigma_0^2}{\sigma_\infty^2} + 1 - \frac{\sigma_0^2}{\sigma_\infty^2}\right) &< 0.
\end{align*}
There hence exists \(\epsilon > 0\) such that for any \(n\) large enough
\begin{align*}
i_n(\theta_n) &\leqslant -\epsilon.
\end{align*}
Since \(\rho_n = \go(1)\) by \eqref{eq.definitionrhon1}, \eqref{ineq.minorationunsurdn2in} is a direct consequence of this.
\vspace{0.25em}\par\noindent\textsc{Situation B: \(\sigma_\infty^2 = \sigma_0^2\).~} In this situation, recalling that for any \(x> 0\)
\begin{align*}
\log x + 1 - x \leqslant -\frac{(x-1)^2}{2} +\frac{(x-1)^3}{3},
\end{align*}
we deduce from \eqref{ineq.majorationin} that for any \(n\) large enough
\begin{align*}
i_n(\theta_n) &\leqslant -\frac{1}{2}\left(\frac{\sigma_0^2}{\sigma_n^2}-1\right)^2 + \frac{1}{3}\left(\frac{\sigma_0^2}{\sigma_n^2}-1\right)^3 - \frac{1}{\sigma_n^2} C \|\eta_n-\eta_0\|^2 + \frac{1}{\sigma_n^2} \left\|\theta_n-\theta_0\right\| R_n \\
&\leqslant \left(\frac{\sigma_0^2}{\sigma_n^2}-1\right)^2 \left[\frac{1}{3}\left(\frac{\sigma_0^2}{\sigma_n^2}-1\right)-\frac{1}{2}\right] - \frac{1}{\sigma_n^2} C \|\eta_n-\eta_0\|^2 + \frac{1}{\sigma_n^2} \left\|\theta_n-\theta_0\right\| R_n \\
&\leqslant -\frac{1}{4}\left(\frac{\sigma_0^2}{\sigma_n^2}-1\right)^2 - \frac{1}{\sigma_n^2} C \|\eta_n-\eta_0\|^2 + \frac{1}{\sigma_n^2} \left\|\theta_n-\theta_0\right\| R_n \\
&\leqslant \frac{1}{\sigma_n^2} \left\{-c\left[(\sigma_0^2 -\sigma_n^2)^2 - \|\eta_n-\eta_0\|^2\right] + \|\theta_n-\theta_0\| R_n\right\}
\end{align*}
where \(c = \min(1/4, C) > 0\). It follows that for any \(n\) large enough
\begin{align*}
i_n(\theta_n)&\leqslant \frac{1}{\sigma_n^2} \left(-c \|\theta_n-\theta_0\|^2 + \|\theta_n-\theta_0\| R_n\right) \\
&\leqslant \frac{1}{\sigma_n^2} \|\theta_n-\theta_0\| \left(R_n-c\|\theta_n-\theta_0\|\right).
\end{align*}
Thus, for any \(n\) large enough
\begin{align*}
i_n(\theta_n) &\leqslant \frac{1}{\sigma_n^2} \frac{\left\|\theta_n-\theta_0\right\|}{\rho_n} \left(\frac{R_n}{\rho_n} - c\frac{\left\|\theta_n-\theta_0\right\|}{\rho_n}\right) \rho_n^2.
\end{align*}
Recalling that
\begin{align*}
\left\|\theta_n-\theta_0\right\| &\geqslant \delta \rho_n, \\
R_n &= \po(\rho_n), \\
\sigma_n^2&\conv\sigma_\infty^2 > 0
\end{align*}
we obtain for any \(n\) large enough,
\begin{align*}
i_n(\theta_n) &\leqslant \frac{1}{\sigma_n^2} \frac{\left\|\theta_n-\theta_0\right\|}{\rho_n} \left(- c\frac{\delta}{2}\right) \rho_n^2
\leqslant -\frac{c\delta^2}{2\sigma_n^2}\rho_n^2
\leqslant -\frac{c\delta^2}{3\sigma_\infty^2}\rho_n^2.
\end{align*}
Hence \eqref{ineq.minorationunsurdn2in} holds in this situation too: it suffices to take \(\epsilon = \dfrac{c\delta^2}{3\sigma_\infty^2}\).

We just proved that \eqref{ineq.minorationunsurdn2in} holds in both cases considered.
\vspace{0.25em}\par\noindent\textsc{Step 6.~} \eqref{eq.supdiffdeLrhon2} is a consequence of \eqref{eq.diffdeLetinthetan} and \eqref{ineq.minorationunsurdn2in}.

\end{proof}

\begin{lemma}\label{lemma.dominationloglikethirdderiv}
Let \(0<\delta<1\)
then under Assumptions (A1)--(A4) and conditions \eqref{eq.definitiondn}, a.s. there exists a constant \(C \in R_+^*\) such that for any \(n\) large enough and for any \(1\leqslant i_1,i_2,i_3 \leqslant 3\)
\begin{align}
\left|\frac{1}{n}\frac{\partial^3 l_{1:n}^*(X_{1:n}|\theta)}{\partial_{i_1}\partial_{i_2}\partial_{i_3}}\right| \leqslant C \label{eq.dominationloglikethirdderiv}
\end{align}
for any \(\theta\in B(\theta_0, \delta d_n)\).
\end{lemma}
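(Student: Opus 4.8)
The plan is to exploit the defining feature of the pseudo-problem, namely that every retained observation has $t_i\notin D_n$. First I would observe that, since $0<\delta<1$, any $\theta\in B(\theta_0,\delta d_n)$ has its break-point coordinate $u$ confined to $D_n$, so that $u$ never crosses a retained $t_i$; consequently each step function $u\mapsto\one_{\interfo{t_i}{+\infty}}(u)$ with $i\leqslant n^*$ is frozen at its value $c_i:=\one_{\interfo{t_i}{+\infty}}(u_0)$ throughout the ball. This is exactly the regularisation already invoked in the proof of Theorem~\ref{theo.asympposteriorpseudo}, and it turns $l_{1:n}^*(X_{1:n}|\cdot)$ into a finite sum of terms, each a polynomial in $(\gamma,u)$ multiplied by a smooth function of $\sigma^2$, hence genuinely $C^\infty$ on $B(\theta_0,\delta d_n)$ so that the derivatives in \eqref{eq.dominationloglikethirdderiv} exist. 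I would also note that, since $d_n\conv0$, for $n$ large enough the ball is contained in $\{\sigma^2>\sigma_0^2/2\}$, so every negative power of $\sigma^2$ appearing below is bounded uniformly on the ball.

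Next I would write $r_i(\theta)=X_i-\gamma\cdot(t_i-u)c_i$ for the residual of a retained observation, so that the $i$-th log-likelihood contribution is $-\tfrac12\log(2\pi\sigma^2)-\tfrac{1}{2\sigma^2}r_i^2$. The key structural remark is that $(\gamma,u)\mapsto\gamma(t_i-u)c_i$ is bilinear, so all its partial derivatives of order three vanish while its lower-order derivatives are bounded uniformly in $i$, $n$ and $\theta$ (because $t_i,u\in\interff{\underline{u}}{\overline{u}}$ and $\gamma$ stays near $\gamma_0$). Applying the product rule to $r_i^2$ and then differentiating in $\sigma^2$, a routine bookkeeping of the derivative patterns shows that every third-order partial derivative of the $i$-th contribution is bounded by $C_0(1+r_i^2)$, where $C_0$ depends only on $\underline{u},\overline{u},\gamma_0$ and the lower bound $\sigma_0^2/2$: the highest power of $r_i$ is two and occurs only for the pure $\sigma^2$-derivative, while the pure $(\gamma,u)$-derivatives carry no factor of $r_i$ at all (the only $r_i$-bearing term is killed by the vanishing of the third derivatives of the bilinear part).

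Finally I would sum and divide by $n$. Because $r_i=\xi_i+c_i[\gamma_0(t_i-u_0)-\gamma(t_i-u)]$ and the bracket is bounded by some $M$ uniformly on the ball, $r_i^2\leqslant2\xi_i^2+2M^2$, whence
\begin{align*}
\left|\frac{1}{n}\frac{\partial^3 l_{1:n}^*(X_{1:n}|\theta)}{\partial_{i_1}\partial_{i_2}\partial_{i_3}}\right|
\leqslant \frac{C_0}{n}\sum_{i=1}^{n^*}\left(1+r_i^2\right)
\leqslant C_0\left(1+2M^2+\frac{2}{n}\sum_{i=1}^{n}\xi_i^2\right).
\end{align*}
The Strong Law of Large Numbers gives $\tfrac1n\sum_{i=1}^n\xi_i^2\convps\sigma_0^2$, so the right-hand side is a.s. bounded uniformly in $n$; choosing $C$ to be any such a.s. bound yields \eqref{eq.dominationloglikethirdderiv}, uniformly in $(i_1,i_2,i_3)$ and in $\theta\in B(\theta_0,\delta d_n)$.

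The main obstacle lies entirely in the first step: one must be certain that the retained indicators are genuinely constant over the \emph{whole} ball, since otherwise the third derivatives do not even exist. This is where the deletion of the observations in $D_n$ and the constraint $\delta<1$ are indispensable, as they guarantee that $u$ stays strictly on the same side of every retained $t_i$ as $u_0$ does. Once smoothness is secured the derivative bound is purely mechanical, the sole probabilistic ingredient being the a.s.\ control of $\tfrac1n\sum\xi_i^2$; I would emphasise that the bound is deterministic and uniform over the ball, so no consistency of $\widehat{\theta}_n^*$ is needed here, only $d_n\conv0$ and the avoidance of $D_n$ by the retained sample.
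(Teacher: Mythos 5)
Your proof is correct, and it takes a genuinely different route from the paper's. The paper proceeds by brute force: it writes out all ten distinct third-order partial derivatives of \(\frac{1}{n}l_{1:n}^*(X_{1:n}|\cdot)\) and, for each one, establishes a.s. convergence, uniformly over the shrinking ball, to an explicit integral limit, invoking Assumption (A1) together with Theorem \ref{theo.Polya} and Lemma \ref{lemma.sylwestertheo35} (plus the strong law and Kolmogorov's criterion); the constant \(C\) is then read off these limits. You instead exploit structure: once the indicators of the retained observations are frozen at \(c_i\) on the ball, the map \((\gamma,u)\mapsto\gamma(t_i-u)c_i\) is bilinear, so its third-order derivatives vanish and its first- and second-order derivatives are bounded uniformly in \(i\), \(n\) and \(\theta\); a product-rule bookkeeping then bounds every third-order partial of each observation's contribution \emph{deterministically} by \(C_0(1+r_i^2)\), and after summing, the only probabilistic ingredient is the strong law for \(\frac{1}{n}\sum_{i=1}^n\xi_i^2\). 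Your accounting of the derivative patterns is accurate (pure \((\gamma,u)\) derivatives carry no factor of \(r_i\), mixed ones at most \(r_i\), the pure \(\sigma^2\) one at most \(r_i^2\)). What each approach buys: yours is shorter, avoids the uniform-convergence machinery entirely, and makes the uniformity in \(\theta\) transparent; the paper's computation produces the explicit limiting expressions of the third derivatives, which however are not used elsewhere, so nothing is lost by your shortcut.

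One caveat, which you share with the paper rather than introduce: your opening claim that \(\theta\in B(\theta_0,\delta d_n)\) with \(\delta<1\) forces \(u\in D_n\) is only valid for \(\delta\leqslant\frac{1}{2}\), since \(D_n\) has half-width \(\frac{d_n}{2}\) while the ball has radius \(\delta d_n\); for \(\delta\in\interoo{\frac{1}{2}}{1}\) a retained \(t_i\) could lie between \(u_0+\frac{d_n}{2}\) and \(u_0+\delta d_n\), in which case the indicators are not frozen and the likelihood is not even differentiable throughout the ball. The paper's own proof has exactly the same factor-of-two slippage (it asserts smoothness over \(B(\theta_0,\delta d_n)\), and over \(B(\theta_0,d_n)\) in the proof of Theorem \ref{theo.asympposteriorpseudo}, ``by definition of the pseudo-problem''), so this is an inconsistency between the definition of \(D_n\) and its use, not a gap specific to your argument; it is repaired by requiring \(\delta<\frac{1}{2}\) or by giving \(D_n\) half-width \(d_n\), and the rest of your proof is unaffected.
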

\begin{proof}[Proof of Lemma \ref{lemma.dominationloglikethirdderiv}]
Let \(0<\delta<1\). We will prove \eqref{eq.dominationloglikethirdderiv} stands true for any \(1\leqslant i_1,i_2,i_3 \leqslant 3\). First notice that for \(n\) large enough, \(\theta\mapsto l_{1:n}^*(X_{1:n}|\theta)\) is indeed infinitely continuously differentiable over \(B(\theta_0, \delta d_n)\) by definition of the pseudo-problem. Any \(\theta\) subsequently considered within this proof is assumed to belong to \(B(\theta_0, \delta d_n)\). Any convergence subsequently mentioned within this proof is uniform in \(\theta\) for \(\theta\in B(\theta_0, \delta d_n)\) for any \(n\) large enough thanks to Theorem \ref{theo.Polya} and Lemma \ref{lemma.sylwestertheo35}.

\vspace{0.25em}\par\noindent\textsc{Proof of \eqref{eq.dominationloglikethirdderiv} for \(\beta=(3,0,0)\).~}
 \begin{align*}
 \frac{1}{n}\frac{\partial^3 l_{1:n}^*(X_{1:n}|\theta)}{(\partial \gamma)^3} = 0.
 \end{align*}
\vspace{0.25em}\par\noindent\textsc{Proof of \eqref{eq.dominationloglikethirdderiv} for \(\beta=(2,1,0)\).~}
 \begin{align*}
 \left|\frac{1}{n}\frac{\partial^3 l_{1:n}^*(X_{1:n}|\theta)}{(\partial \gamma)^2 \partial u}\right| &= \frac{2}{\sigma^2}\left|\frac{1}{n}\sum_{i=1}^{n^*} (t_i -u)\one_{\interoo{t_i}{+\infty}}(u)\right| \convn \frac{2}{\sigma^4} \left|\int_{\underline{u}}^{u} (t-u)f(t) \ud t \right| \leqslant \frac{2}{\sigma^2}|\overline{u}-\underline{u}|.
 \end{align*}
\vspace{0.25em}\par\noindent\textsc{Proof of \eqref{eq.dominationloglikethirdderiv} for \(\beta=(2,0,1)\).~}
 \begin{align*}
 \left|\frac{1}{n}\frac{\partial^3 l_{1:n}^*(X_{1:n}|\theta)}{(\partial \gamma)^2 \partial \sigma^2}\right| &= \frac{1}{\sigma^4}\left|\frac{1}{n}\sum_{i=1}^{n^*} (t_i -u)^2\one_{\interoo{t_i}{+\infty}}(u)\right| \convn \frac{1}{\sigma^4} \left|\int_{\underline{u}}^{u} (t-u)^2f(t) \ud t \right| \leqslant \frac{1}{\sigma^4}|\overline{u}-\underline{u}|^2.
 \end{align*}
\vspace{0.25em}\par\noindent\textsc{Proof of \eqref{eq.dominationloglikethirdderiv} for \(\beta=(1,2,0)\).~}
 \begin{align*}
 \left|\frac{1}{n}\frac{\partial^3 l_{1:n}^*(X_{1:n}|\theta)}{\partial \gamma\cdot (\partial u)^2}\right| &= \frac{2}{\sigma^2}\left|\gamma\frac{1}{n}\sum_{i=1}^{n^*} \one_{\interoo{t_i}{+\infty}}(u)\right| \convn \frac{2}{\sigma^2}\left|\gamma\int_{\underline{u}}^{u} f(t) \ud t \right| \leqslant \frac{2}{\sigma^2}|\gamma|.
 \end{align*}
\vspace{0.25em}\par\noindent\textsc{Proof of \eqref{eq.dominationloglikethirdderiv} for \(\beta=(1,1,1)\).~}
 \begin{align*}
 \left|\frac{1}{n}\frac{\partial^3 l_{1:n}^*(X_{1:n}|\theta)}{\partial \gamma \partial u \partial \sigma^2}\right| &= \frac{1}{\sigma^4}\left|\frac{1}{n}\sum_{i=1}^{n^*} (X_i-2\gamma\cdot(t_i-u))\one_{\interoo{t_i}{+\infty}}(u)\right| \\
&= \frac{1}{\sigma^4}\left|\frac{1}{n}\sum_{i=1}^{n^*} \left[\xi_i + \gamma_0\cdot(t_i-u_0)\one_{\interoo{t_i}{+\infty}}(u_0) -2\gamma\cdot(t_i-u)\right]\one_{\interoo{t_i}{+\infty}}(u)\right| \\
&\convnps \frac{1}{\sigma^4} \left|\int_{\underline{u}}^{\min(u,u_0)} \gamma_0\cdot(t-u_0) f(t) \ud t - 2 \int_{\underline{u}}^{u} \gamma\cdot(t-u) f(t) \ud t \right|
 \end{align*}
And this limit is bounded by \(\frac{3}{\sigma^4}|\overline{u}-\underline{u}|(|\gamma|+|\gamma_0|)\).

\vspace{0.25em}\par\noindent\textsc{Proof of \eqref{eq.dominationloglikethirdderiv} for \(\beta=(1,0,2)\).~}
 \begin{align*}
 \left|\frac{1}{n}\frac{\partial^3 l_{1:n}^*(X_{1:n}|\theta)}{\partial \gamma\cdot (\partial \sigma^2)^2}\right| &= \frac{2}{\sigma^6}\left|\frac{1}{n}\sum_{i=1}^{n^*} \left[x_i-\gamma\cdot(t_i-u)\one_{\interoo{t_i}{+\infty}}(u)\right](t_i-u)\one_{\interoo{t_i}{+\infty}}(u)\right| \\
&= \frac{2}{\sigma^6}\left|\frac{1}{n}\sum_{i=1}^{n^*} \left[\xi_i + \gamma_0\cdot(t_i-u_0)\one_{\interoo{t_i}{+\infty}}(u_0) -\gamma\cdot(t_i-u)\right](t_i-u)\one_{\interoo{t_i}{+\infty}}(u)\right| \\
&\convnps \frac{2}{\sigma^6} \left|\int_{\underline{u}}^{\min(u,u_0)} \gamma_0\cdot(t-u_0)(t-u) f(t) \ud t - \int_{\underline{u}}^{u} \gamma\cdot(t-u)^2 f(t) \ud t \right|
 \end{align*}
And this limit is bounded by \(\frac{4}{\sigma^6}|\overline{u}-\underline{u}|^2(|\gamma|+|\gamma_0|)\).

\vspace{0.25em}\par\noindent\textsc{Proof of \eqref{eq.dominationloglikethirdderiv} for \(\beta=(0,3,0)\).~}
 \begin{align*}
 \left|\frac{1}{n}\frac{\partial^3 l_{1:n}^*(X_{1:n}|\theta)}{(\partial u)^3}\right| &=0.
 \end{align*}
\vspace{0.25em}\par\noindent\textsc{Proof of \eqref{eq.dominationloglikethirdderiv} for \(\beta=(0,2,1)\).~}
 \begin{align*}
 \left|\frac{1}{n}\frac{\partial^3 l_{1:n}^*(X_{1:n}|\theta)}{(\partial u)^2\partial \sigma^2}\right| &= \frac{1}{\sigma^4}\gamma^2\left|\frac{1}{n}\sum_{i=1}^{n^*} \one_{\interoo{t_i}{+\infty}}(u)\right| \convn \frac{1}{\sigma^4} \gamma^2 \left|\int_{\underline{u}}^{u} f(t) \ud t \right| \leqslant \frac{1}{\sigma^4}\gamma^2.
 \end{align*}
\vspace{0.25em}\par\noindent\textsc{Proof of \eqref{eq.dominationloglikethirdderiv} for \(\beta=(0,1,2)\).~}
 \begin{align*}
 \left|\frac{1}{n}\frac{\partial^3 l_{1:n}^*(X_{1:n}|\theta)}{\partial u (\partial \sigma^2)^2}\right| &= \frac{2}{\sigma^6}\left|\frac{1}{n}\sum_{i=1}^{n^*} \left[x_i-\gamma\cdot(t_i-u)\one_{\interoo{t_i}{+\infty}}(u)\right]\gamma\one_{\interoo{t_i}{+\infty}}(u)\right| \\
&= \frac{2}{\sigma^6}\Bigg|\frac{1}{n}\sum_{i=1}^{n^*} \left[\xi_i + \gamma_0\cdot(t_i-u_0)\one_{\interoo{t_i}{+\infty}}(u_0)  -\gamma\cdot(t_i-u)\right]\gamma\one_{\interoo{t_i}{+\infty}}(u)\Bigg| \\
&\convnps \frac{2}{\sigma^6} \left|\int_{\underline{u}}^{u_0} \gamma\gamma_0\cdot(t-u_0) f(t) \ud t - \int_{\underline{u}}^{u} \gamma^2(t-u) f(t) \ud t \right|
 \end{align*}
And this limit is bounded by \(\frac{2}{\sigma^6}|\overline{u}-\underline{u}|(|\gamma^2|+|\gamma_0\gamma|)\).

\vspace{0.25em}\par\noindent\textsc{Proof of \eqref{eq.dominationloglikethirdderiv} for \(\beta=(0,0,3)\).~}
 \begin{align*}
 \left|\frac{1}{n}\frac{\partial^3 l_{1:n}^*(X_{1:n}|\theta)}{(\partial \sigma^2)^3}\right| &= \frac{1}{\sigma^6}\left|-1 + \frac{3}{\sigma^2}\frac{1}{n}\sum_{i=1}^{n^*} \left[x_i-\gamma\cdot(t_i-u)\one_{\interoo{t_i}{+\infty}}(u)\right]^2 \right| \\
&= \frac{1}{\sigma^6}\left|-1 + \frac{3}{\sigma^2}\frac{1}{n}\sum_{i=1}^{n^*} \left[\xi_i+\gamma\cdot(t_i-u_0)\one_{\interoo{t_i}{+\infty}}(u_0)-\gamma\cdot(t_i-u)\one_{\interoo{t_i}{+\infty}}(u)\right]^2 \right| \\
&\convnps \frac{1}{\sigma^8}\left|-\sigma^2 + 3\left(\sigma_0^2 + \int_{\underline{u}}^{u_0} \gamma_0^2(t-u_0)^2 f(t) \ud t \right.\right.\\
&\qquad \qquad \left.\left. -2 \int_{\underline{u}}^{\min(u,u_0)} \gamma\gamma_0\cdot(t-u_0)(t-u) f(t) \ud t + \int_{\underline{u}}^{u} \gamma^2(t-u)^2 f(t) \ud t \right) \right|
 \end{align*}
And this limit is bounded by \(\frac{1}{\sigma^8}\left[3\sigma_0^2+\sigma^2+(|\gamma|+|\gamma_0|)^2(\overline{u}-\underline{u})^2\right]\).

\eqref{eq.dominationloglikethirdderiv} is thus a direct consequence of both the uniform convergences mentioned above and the trivial majoration of all the limits involved by a fixed constant \(C\) for any \(n\) large enough.
\end{proof}

\section*{Acknowledgements}
The authors would like to thank the referees and associate editor for their constructive comments.

\bibliographystyle{apalike}

\end{document}